\theoremstyle{plain}\newtheorem{definition}{Definition}[section]
\theoremstyle{definition}\newtheorem{theorem}{Theorem}[section]
\theoremstyle{plain}\newtheorem{lemma}[theorem]{Lemma}
\theoremstyle{plain}
\theoremstyle{plain}\newtheorem{proposition}[theorem]{Proposition}
\theoremstyle{remark}\newtheorem{remark}{Remark}[section]
\newcommand{\Div}{\mathrm{div}}
\newcommand{\R}{\mathbb{R}}
\newcommand{\be}{\begin{equation}}
\newcommand{\ee}{\end{equation}}
\newcommand{\ba}{\begin{aligned}}
\newcommand{\ea}{\end{aligned}}
\newcommand{\f}{\frac}
\newcommand{\ben}{\begin{enumerate}}
\newcommand{\een}{\end{enumerate}}
\newcommand{\ti}{\nabla}
\newcommand{\nt}{\notag}
\newcommand{\BF}{\textbf}
\begin{document}
\title{Existence and stability of stationary solutions to the compressible quantum model \hspace{-4mm}}
\author{ Xiaoyu Xi$^{\thanks{Corresponding author.  E-mail addresses: \it  sxxixiaoyu@gzhu.edu.cn(X. Xi).}}$\\[10pt]
\small { School  of Mathematics and Information Science, Guangzhou University,}\\
\small {Guangzhou, 510006, P. R. China}\\[5pt]
}



\date{}

\maketitle

\begin{abstract}In this paper, the compressible quantum model with the given mass source and the external force of general form in three-dimensional whole space is considered. Based on the weighted $L^2$ method and $L^\infty$ estimates, the existence and uniqueness of stationary solutions can be obtained by the contraction mapping principle. By using a general energy method, the nonlinear stability of stationary solutions is studied, and the time decay rates of the solutions are established when the initial perturbation belongs to $\dot{H}^{-s}$ with $0 \leq s < \f32$.

\vspace*{5pt}
\noindent{\it {\rm Keywords:}} quantum model, stationary solutions, nonlinear stability, decay rates.

\vspace*{5pt}
\noindent{\it {\rm MSC:}} 35M20; 35Q35
\end{abstract}


\section{Introduction}
The compressible viscous quantum model with the given mass source and the external force of general form can be written as
\begin{equation}\label{1.1}
		\left\{ \begin{aligned}
			&\rho _t+\mathrm{div}(\rho u)=G(x),\\
 			&(\rho u)_t+\mathrm{div}(\rho u\otimes u)-\mu\Delta u-(\mu+\lambda)\ti\Div u+\ti P-\f{\hbar^2}{2}\rho\ti \bigg(\f{\triangle\sqrt{\rho}}{\sqrt{\rho}}\bigg)=\rho F(x),\\
            &(\rho, u)(x,t)|_{t=0}=(\rho_0, u_0)(x),
		\end{aligned} \right.
	\end{equation}
where $(x, t) \in \mathbb{R}^3\times\mathbb{R}^{+}$ denotes the space-time position. The unknown functions $\rho$ and $u$ represent the density and velocity. $P=P(\rho)$ is the pressure, $\hbar$ denotes the Planck constant and $\nu$ is the magnetic diffusivity. The constants $\mu> 0$ and $\lambda$ are the viscosity coefficients with the usual physical condition $3\lambda+{2}\mu \geq 0$. $G(x)$ and $F(x)$ are the given mass source and the external force, respectively. The expression $\f{\triangle\sqrt{\rho}}{\sqrt{\rho}}$ can be interpreted as a quantum potential, i.e. Bohm potential, which satisfies
\begin{equation*}
2\rho\ti\bigg({\f{\triangle\sqrt{\rho}}{\sqrt{\rho}}}\bigg)=\Div(\rho\ti^2\mathrm{log}\rho)=\triangle\nabla\rho+\f{|\ti \rho|^2\ti \rho}{\rho^2}-\f{\ti \rho \triangle \rho}{\rho}-\f{\ti \rho\cdot \ti^2\rho}{\rho}.
\end{equation*}
The quantum correction terms, which are closely related to Bohm potential, can be tracked back to Wigner \cite{[28]} for the thermodynamic equilibrium.
For more physical interpretations of the model, one may refer to Hass \cite{[s7],[s8]}.
Furthermore, we assume
\begin{equation*}
\underset{|x|\rightarrow \infty}{\lim}(\rho_0(x)-\bar{\rho},{u}_0(x))=0,
\end{equation*}
where $\bar{\rho}$ is the constant.

The stationary problem corresponding to the problem \eqref{1.1} is
\begin{equation}\label{st1.1}
		\left\{ \begin{aligned}
			&\mathrm{div}(\rho u)=G(x),\\
 			&\mathrm{div}(\rho u\otimes u)-\mu\Delta u-(\mu+\lambda)\ti\Div u+\ti P-\f{\hbar^2}{2}\rho\ti \bigg(\f{\triangle\sqrt{\rho}}{\sqrt{\rho}}\bigg)=\rho F,
		\end{aligned} \right.
	\end{equation}
which satisfies
\begin{equation*}
\underset{|x|\rightarrow \infty}{\lim}(\rho(x)-\bar{\rho},{u}(x))=0.
\end{equation*}
$\mathbf{Notation.}$
Throughout this paper, the notation $a\sim b$ means that $C_1 b \leq a\leq C_2 b$. $C_1,~C_2$ are generic positive constants independent of $t$. We write $\int f dx=\int_{\mathbb{R}^3}f dx$ and use $L^p$ to denote the $L^p(\mathbb{R}^3)$ norm, for scalars $v_1$, $v_2$ and $u=(u_1,u_2,\cdots,u_n)$, $\omega=(\omega_1,\omega_2,\cdots,\omega_n)$, we put
\begin{eqnarray*}
&&\|u\|_k=\|u\|_{H^k},~~\|u\|=\|u\|_{L^2},\\
&&\langle v_1,v_2\rangle=\int_{\mathbb{R}^3}v_1v_2dx,~~\langle u,\omega\rangle=\sum\limits_{i=1}^n\langle u_i,\omega_i\rangle,\\
&&H^k=\{u\in L^1_{loc}\big|\|u\|_{H^k}<\infty\},~~\hat{H}^k=\{u\in L^1_{loc}\big|\|\ti u\|_{H^{k-1}}<\infty\},
	\end{eqnarray*}
where $u$ is either a vector or scalar. Furthermore, we put
\begin{eqnarray*}
&&\mathcal{H}^{k,l}=\{(\sigma,u)\big|\sigma\in H^k,u\in H^l\},\\
&&\hat{\mathcal{H}}^{k,l}=\{(\sigma,u)\big|\sigma\in \hat{H}^k,u\in \hat{H}^l\},\\
&&\|(\sigma,u)\|_{k,l}=\|\sigma\|_{k}+\|u\|_l.
	\end{eqnarray*}
Now, we define some function spaces which will be used later,
\begin{eqnarray*}
I^k_\epsilon=\{\sigma\in H^k\big|\|\sigma\|_{I^k}<\epsilon\},~~J^k_\epsilon=\{u\in H^k\big|\|u\|_{J^k}<\epsilon\},
	\end{eqnarray*}
where
\begin{align*}
\|\sigma\|_{I^k}&=\|\sigma\|_{L^6}+\sum\limits_{\nu=1}^{k}\|(1+|x|)^\nu(\ti^{\nu}\sigma,\ti^{\nu+1}\sigma,\ti^{\nu+2}\sigma)\|+\|(1+|x|)^2\sigma\|_{L^\infty}\\
&~~+\|(1+|x|)^2\ti\sigma\|_{L^\infty},\\
\|u\|_{J^k}&=\|u\|_{L^6}+\sum\limits_{\nu=1}^{k}\|(1+|x|)^{\nu-1}\ti^{\nu}u\|+\sum\limits_{\nu=0}^{1}\|(1+|x|)^{\nu+1}\ti^{\nu}u\|_{L^\infty}\\
&~~+\|(1+|x|)^2\ti^2u\|_{L^\infty}.
\end{align*}
Moreover, we set
\begin{eqnarray*}
&&\mathcal{F}^{j,k}_{\epsilon}=\{(\sigma,u)|\sigma\in I^j_\epsilon,u\in J^k_\epsilon,\|(\sigma,u)\|_{\mathcal{F}^{j,k}}<\epsilon\},\\
&&\|(\sigma,u)\|_{\mathcal{F}^{j,k}}=\|\sigma\|_{I^j}+\|u\|_{J^k},\\
&&\dot{\mathcal{F}}^{j,k}_{\epsilon}=\{(\sigma,u)\in\mathcal{F}^{j,k}_{\epsilon}|\Div u=\Div V_1+V_2,\|(1+|x|)^3V_1\|_{L^\infty}+\|(1+|x|)^{-1}V_2\|_{L^1}\leq\epsilon\},
	\end{eqnarray*}
and
\begin{eqnarray*}
    K_0&=&\|(1+|x|)G\|+\sum\limits_{\nu=0}^3\|(1+|x|)^{\nu+1}\ti^\nu F\|+\sum\limits_{\nu=1}^4\|(1+|x|)^{\nu}\ti^\nu G\|,\\
K&=&\|(1+|x|)G\|+\sum\limits_{\nu=0}^3\|(1+|x|)^{\nu+1}\ti^\nu F\|+\sum\limits_{\nu=1}^4\|(1+|x|)^{\nu}\ti^\nu G\|\\
    &&+\|(1+|x|)^3(F,G,\ti F,\ti G)\|_{L^\infty}+\|(1+|x|)^2F_1\|_{L^\infty}+\|F_2\|_{L^1}\\
    &&+\|(1+|x|)^3\ti^2 G\|_{L^\infty}.
\end{eqnarray*}
In this paper, the external force is given by the following form
\begin{eqnarray*}
F=\Div F_1+F_2.
\end{eqnarray*}
The three main results are as follows. First, for the existence of stationary solutions of the problem \eqref{st1.1} and its weighted $L^2$ and $L^\infty$ estimates, we state the following:
\begin{theorem}\label{thm1.1}
Let $P$ is a smooth solution in a neighborhood of $\bar{\rho}$. There exist small constants $c_0$ and $\epsilon_0$ such that if $(F,G)$ satisfies
\begin{eqnarray*}
    K+\|(1+|x|)^{-1}G\|_{L^1}\leq c_0\epsilon,
\end{eqnarray*}
for some $\epsilon\leq \epsilon_0$.
Then the problem \eqref{st1.1} admits a solution of the form: $(\rho,u)=(\sigma+\bar{\rho},u)$ where $(\sigma,u)\in \dot{\mathcal{F}}^{4,5}_{\epsilon}$. Moreover, the solution is unique in the following sense: if $(\sigma_1+\bar{\rho},u_1)$ and $(\sigma_2+\bar{\rho},u_2)$ are solutions to \eqref{st1.1} with the same $(F,G)$, and $\|(\sigma_1,u_1)\|_{{\mathcal{F}}^{4,5}}\leq\epsilon$, $\|(\sigma_2,u_2)\|_{{\mathcal{F}}^{4,5}}\leq\epsilon$, then $(\sigma_1,u_1)=(\sigma_2,u_2)$.
\end{theorem}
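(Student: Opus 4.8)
The plan is to rewrite \eqref{st1.1} for the perturbation $(\sigma,u):=(\rho-\bar{\rho},u)$ as a fixed-point equation and to invoke the contraction mapping principle on the complete metric space $\dot{\mathcal{F}}^{4,5}_{\epsilon}$. Keeping the linearization around the constant state $(\bar{\rho},0)$ on the left-hand side, the stationary system becomes
\[
\bar{\rho}\,\Div u=G-\Div(\sigma u),\qquad -\mu\la u-(\mu+\lambda)\ti\Div u+P'(\bar{\rho})\ti\sigma-\tfrac{\hbar^{2}}{4}\la\ti\sigma=\bar{\rho}F+\mathcal{N}(\sigma,u),
\]
where $\mathcal{N}(\sigma,u)$ collects all contributions at least quadratic in $(\sigma,\ti\sigma,u)$: the convection $-\Div(\rho u\otimes u)$, the pressure remainder $-\ti\big(P(\bar{\rho}+\sigma)-P(\bar{\rho})-P'(\bar{\rho})\sigma\big)$, the term $\sigma F$, and the nonlinear part of the Bohm potential, i.e. $\tfrac{\hbar^{2}}{4}$ times the three rational expressions in $(\ti\rho,\la\rho,\ti^{2}\rho)$ displayed right after \eqref{1.1}, each of which is quadratic or cubic in $(\sigma,\ti\sigma)$ up to the smooth bounded factors $1/\rho$, $1/\rho^{2}$. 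Here we use the standard assumptions $P'(\bar{\rho})>0$, $\mu>0$ and $2\mu+\lambda>0$ (the last following from $3\lambda+2\mu\ge0$).

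The first step is the linear theory with weights. Given source data in the appropriate weighted spaces, substitute $\Div u$ from the continuity equation into the divergence of the momentum equation; this produces a single elliptic equation for $\sigma$ alone,
\[
P'(\bar{\rho})\la\sigma-\tfrac{\hbar^{2}}{4}\la^{2}\sigma=\Div\!\big(\bar{\rho}F+\mathcal{N}\big)+\tfrac{2\mu+\lambda}{\bar{\rho}}\,\la\big(G-\Div(\sigma u)\big),
\]
whose Fourier symbol $|\xi|^{2}\big(P'(\bar{\rho})+\tfrac{\hbar^{2}}{4}|\xi|^{2}\big)$ behaves like $|\xi|^{2}$ at low frequency and like $|\xi|^{4}$ at high frequency. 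Solving it by Fourier-multiplier and kernel estimates — splitting low and high frequency, handling the slowly decaying low-frequency part by Hardy--Littlewood--Sobolev and weighted Hardy inequalities — yields $\sigma$ with exactly the regularity and decay recorded in the $I^{4}$ norm (the $L^{6}$ component by $\dot{H}^{1}\hookrightarrow L^{6}$, the two extra orders in $\|(1+|x|)^{\nu}(\ti^{\nu}\sigma,\ti^{\nu+1}\sigma,\ti^{\nu+2}\sigma)\|$ being precisely the high-frequency gain of the biharmonic term). With $\sigma$ in hand the momentum equation is a Lamé system for $u$ with an already-controlled right-hand side, solved by the same techniques to produce $u$ with the norm $J^{5}$; moreover $\Div u=\tfrac1{\bar{\rho}}\big(G-\Div(\sigma u)\big)$ holds automatically, since the $\sigma$-equation is precisely the solvability condition forcing the divergence of the Lamé solution to match $\tfrac1{\bar\rho}\big(G-\Div(\sigma u)\big)$ and decay at infinity removes the harmonic ambiguity, so one reads off the required decomposition $\Div u=\Div V_{1}+V_{2}$ with $V_{1}=-\tfrac1{\bar{\rho}}\sigma u$ and $V_{2}=\tfrac1{\bar{\rho}}G$. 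Propagating all weights through the multipliers, one crucially uses that every source term is either in divergence form (via $F=\Div F_{1}+F_{2}$, the quadratic terms $\Div(\sigma u)$ and those in $\mathcal{N}$) or carries built-in decay — this is where the norm $K$ and the auxiliary quantity $\|(1+|x|)^{-1}G\|_{L^{1}}$ in the hypothesis enter.

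Let $\mathcal{L}$ denote the linear solution operator just constructed and set $\Phi(\sigma,u):=\mathcal{L}\big(G-\Div(\sigma u),\,\bar{\rho}F+\mathcal{N}(\sigma,u)\big)$. Then $\Phi$ maps $\dot{\mathcal{F}}^{4,5}_{\epsilon}$ into itself: the linear estimates bound $\|\Phi(\sigma,u)\|_{\mathcal{F}^{4,5}}$ by $C\big(K+\|(1+|x|)^{-1}G\|_{L^{1}}\big)+C\|(\sigma,u)\|_{\mathcal{F}^{4,5}}^{2}\le Cc_{0}\epsilon+C\epsilon^{2}\le\epsilon$ for $c_{0},\epsilon_{0}$ small, where the quadratic bound on $\mathcal{N}$ uses that the weights of the two factors in each product add up favourably (for instance $\|(1+|x|)^{3}\sigma u\|_{L^{\infty}}\le\|(1+|x|)^{2}\sigma\|_{L^{\infty}}\|(1+|x|)u\|_{L^{\infty}}$, both controlled by $\|(\sigma,u)\|_{\mathcal{F}^{4,5}}$), and the divergence-structure clause of $\dot{\mathcal{F}}^{4,5}_{\epsilon}$ is verified through the explicit $V_{1},V_{2}$, with $\|(1+|x|)^{3}V_{1}\|_{L^{\infty}}\lesssim\epsilon^{2}$ and $\|(1+|x|)^{-1}V_{2}\|_{L^{1}}\lesssim c_{0}\epsilon$. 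Next, $\Phi$ is a contraction: both $\mathcal{N}(\sigma_{1},u_{1})-\mathcal{N}(\sigma_{2},u_{2})$ and $\Div(\sigma_{1}u_{1})-\Div(\sigma_{2}u_{2})$ are linear in $(\sigma_{1}-\sigma_{2},u_{1}-u_{2})$ with coefficients of size $O(\epsilon)$, so the linear estimates give $\|\Phi(\sigma_{1},u_{1})-\Phi(\sigma_{2},u_{2})\|\le C\epsilon\|(\sigma_{1}-\sigma_{2},u_{1}-u_{2})\|<\tfrac12\|(\sigma_{1}-\sigma_{2},u_{1}-u_{2})\|$, measured in the $\mathcal{F}^{4,5}$ metric (or, to avoid any derivative loss, in a weaker metric in which $\dot{\mathcal{F}}^{4,5}_{\epsilon}$ is still complete, by weak compactness of balls in the strong space). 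Banach's theorem then yields a unique fixed point $(\sigma,u)\in\dot{\mathcal{F}}^{4,5}_{\epsilon}$, which unravels to a solution of \eqref{st1.1}. For the asserted uniqueness, any solution $(\sigma_{i}+\bar{\rho},u_{i})$ of \eqref{st1.1} with $\|(\sigma_{i},u_{i})\|_{\mathcal{F}^{4,5}}\le\epsilon$ satisfies, by the continuity equation, $\Div u_{i}=\Div(-\tfrac1{\bar{\rho}}\sigma_{i}u_{i})+\tfrac1{\bar{\rho}}G$ with $\tfrac1{\bar{\rho}}\|(1+|x|)^{3}\sigma_{i}u_{i}\|_{L^{\infty}}+\tfrac1{\bar{\rho}}\|(1+|x|)^{-1}G\|_{L^{1}}\lesssim\epsilon^{2}+c_{0}\epsilon\le\epsilon$, hence lies in $\dot{\mathcal{F}}^{4,5}_{\epsilon}$; being both fixed points of the contraction $\Phi$, they coincide.

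The main obstacle is the weighted linear theory: one must control the fourth-order elliptic operator and the Lamé operator simultaneously in the entire hierarchy of $(1+|x|)^{\nu}$-weighted $L^{2}$ norms \emph{and} the $L^{\infty}$ norms built into $I^{4}$ and $J^{5}$, whereas in $\mathbb{R}^{3}$ the low-frequency parts decay too slowly to be reached by plain energy estimates; this forces one to exploit the divergence/decay structure of every single source term and to chase the weight exponents so the whole scheme closes exactly at the levels $4$ and $5$. Relative to classical compressible Navier--Stokes the extra difficulty is the quantum correction: the linear biharmonic term $\tfrac{\hbar^{2}}{4}\la\ti\sigma$ alters the principal symbol (it helps regularity but must be carried through every estimate), while the nonlinear Bohm terms involve $1/\rho$ and $1/\rho^{2}$ and can only be controlled via the $L^{\infty}$-smallness of $\sigma$, thereby coupling the pointwise estimates to the energy estimates.
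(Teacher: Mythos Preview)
Your strategy is the same as the paper's: reformulate the stationary problem as a fixed-point equation, establish weighted $L^{2}$ and $L^{\infty}$ linear estimates sufficient to close in $\dot{\mathcal{F}}^{4,5}_{\epsilon}$, and apply the contraction mapping principle. The paper (following Chen--Zhao \cite{CZ} and Shibata--Tanaka \cite{[jj2]}) carries this out with two tactical differences from your sketch. First, its iteration \eqref{aA1.2} is not purely constant-coefficient: the continuity equation reads $\Div u+\tfrac{\tilde u}{1+\tilde\sigma}\ti\sigma=g$, so the unknown $\sigma$ still appears on the left with a small variable coefficient built from the previous iterate; accordingly the paper's $(V_1,V_2)$ are $V_1=-\tfrac{\tilde u}{\tilde\rho}\sigma$, $V_2=\Div(\tfrac{\tilde u}{\tilde\rho})\sigma+\tfrac{G}{\tilde\rho}$ rather than your $(-\tfrac{1}{\bar\rho}\sigma u,\tfrac{1}{\bar\rho}G)$. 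Second, instead of decoupling into a scalar fourth-order equation for $\sigma$ followed by a Lam\'e system for $u$, the paper uses the Helmholtz decomposition $u=\omega+\ti p$ with the explicit Stokes/Oseen kernels $E_{ij}$ and the Newtonian potential $E_0$, and then recovers $\sigma$ from the Bessel-type relation $\sigma-\tfrac{\hbar^{2}}{4}\la\sigma=\Psi+(2\mu+\lambda)\la p$; the pointwise $L^{\infty}$ bounds are read off directly from these convolution representations. Your decoupling route is legitimate and conceptually cleaner, but the Helmholtz/kernel approach has the advantage that the required weighted pointwise estimates (Lemma~2.5 of \cite{CZ} and the analysis in \cite{[jj2]}) are already available in the literature, which is why the paper can largely defer to those references.
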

Let $(\sigma^*,u^*)$ be the stationary solution constructed in Theorem \ref{thm1.1}, and define the following function space:
\begin{eqnarray*}
&\mathfrak{C}(0,T;\mathcal{H}^{j,k})=&\{(\varrho,\omega)|\varrho(t,x)\in C^0(0,T;H^j)\cap C^1(0,T;H^{j-2}),\\
&&~~\omega(t,x)\in C^0(0,T;H^k)\cap C^1(0,T;H^{k-2})\}.
\end{eqnarray*}
Our second theorem is concerned with the nonlinear stability of the stationary solution.
\begin{theorem}\label{thm1.2}
There exist a positive constant $C$ and a small constant $\delta>0$ such that if $\|(\rho_0-\rho^*,u_0-u^*)\|_{4,3}\leq \delta$, the Cauchy problem \eqref{1.1} admits a unique global solution $(\rho,u)=(\varrho+\rho^*,\omega+u^*)$. Moreover, $(\varrho,\omega)\in \mathfrak{C}(0,\infty;\mathcal{H}^{4,3})$, $\ti \varrho \in L^2(0,\infty;{H}^{4})$, $\ti \omega \in L^2(0,\infty;{H}^{3})$, and the following estimate holds
\begin{eqnarray*}
\|( \varrho, \omega)(t)\|_{4,3}^2+\int\limits_0^t\|(\ti \varrho,\ti \omega)(s)\|_{4,3}^2ds\leq C\|( \varrho, \omega)(0)\|_{4,3}^2,
	\end{eqnarray*}
for any $t\geq 0$.
\end{theorem}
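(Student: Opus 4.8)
The plan is to follow the classical program for nonlinear stability of a stationary state: reformulate \eqref{1.1} as a perturbation problem around $(\rho^*,u^*)$, establish local existence in $\MC H^{4,3}$, close a uniform-in-time a priori estimate, and patch the two together by a continuity argument. Writing $\rho=\rho^*+\varrho$, $u=u^*+\omega$ and subtracting the stationary system \eqref{st1.1} (so that $G$ and $\rho F$ drop out), then dividing the momentum equation by $\rho$ and expanding the Bohm term via the identity in the introduction, one obtains
\begin{equation*}
\ba
&\p_t\varrho+u^*\cdot\ti\varrho+\rho^*\,\Div\omega=\MC N_1,\\
&\p_t\omega+u^*\cdot\ti\omega+\f{P'(\rho^*)}{\rho^*}\,\ti\varrho-\f{\mu}{\rho^*}\la\omega-\f{\mu+\lambda}{\rho^*}\ti\Div\omega-\f{\hbar^2}{4\rho^*}\la\ti\varrho=\MC N_2,
\ea
\end{equation*}
where $\MC N_1,\MC N_2$ collect the quadratic-and-higher interactions of $(\varrho,\omega)$ and the terms that are linear in $(\varrho,\omega)$ but carry a coefficient built out of derivatives of $(\rho^*,u^*)$. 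By Theorem \ref{thm1.1} the latter coefficients are both small, of size $O(\epsilon)$, and decay at least like $(1+|x|)^{-2}$. I keep the leading Bohm contribution $-\f{\hbar^2}{4\rho^*}\la\ti\varrho$ explicit, since it is the highest-order linear operator in the system and is responsible for the gain of one derivative on $\varrho$.

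\textbf{A priori estimates.} I would build a Lyapunov functional $\MC E(t)$ in a hierarchy $|\alpha|=0,1,\dots,3$: apply $\p^\alpha$ to the system, test the density equation against $\f{P'(\rho^*)}{\rho^*}\p^\alpha\varrho$ and the momentum equation against $\rho^*\p^\alpha\omega$, and add. The pressure cross-terms cancel up to coefficients carrying $\ti\rho^*$; the viscous terms produce $\mu\|\ti\p^\alpha\omega\|^2$; and the Bohm term, after pairing with $\rho^*\p^\alpha\omega$ and using the density equation to replace $\Div\p^\alpha\omega$ by $-\f{1}{\rho^*}\p_t\p^\alpha\varrho+(\cdots)$, contributes $\f{\hbar^2}{8}\f{d}{dt}\!\int\f{|\ti\p^\alpha\varrho|^2}{\rho^*}dx$. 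Summing $|\alpha|\le 3$, this last mechanism is exactly how $\|\ti\varrho\|_{3}$ (equivalently $\varrho\in H^4$) enters $\MC E(t)$ while $\omega$ stays in $H^3$. The dissipation for $\varrho$ is then extracted in the usual way: for $|\alpha|\le 3$ apply $\p^\alpha$ to the momentum equation, test against $\ti\p^\alpha\varrho$, rewrite $\langle\p_t\p^\alpha\omega,\ti\p^\alpha\varrho\rangle=\f{d}{dt}\langle\p^\alpha\omega,\ti\p^\alpha\varrho\rangle-\langle\p^\alpha\omega,\ti\p_t\p^\alpha\varrho\rangle$ and bound the last term by $C\|\ti\omega\|_{3}^2$ via the density equation; the pressure term yields $\sim\|\ti\p^\alpha\varrho\|^2$ (using $P'(\bar\rho)>0$) and the Bohm term yields $\sim\|\la\p^\alpha\varrho\|^2$. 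Adding $\kappa$ times this identity to the energy identity with $\kappa>0$ small gives $\MC E(t)\sim\|(\varrho,\omega)(t)\|_{4,3}^2$ together with a dissipation rate $\MC D(t)\sim\|(\ti\varrho,\ti\omega)(t)\|_{4,3}^2$ --- the parabolic smoothing accounting for the extra derivative on $\omega$ and the Bohm smoothing for the one on $\varrho$.

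\textbf{Closing the estimate; the main obstacle.} It remains to prove $\f{d}{dt}\MC E(t)+c\MC D(t)\le 0$. The terms from $\MC N_1,\MC N_2$ that are genuinely quadratic/cubic in $(\varrho,\omega)$ are handled by Sobolev embedding, the smallness $\|(\varrho,\omega)\|_{4,3}\le C\delta$, and the fact that in each such product at least one factor carries a derivative of the perturbation and hence lies in $\MC D$; each is bounded by $C\delta\,\MC D(t)$. The delicate terms are those linear in $(\varrho,\omega)$ but coupled to $(\rho^*,u^*)$: since $\MC E(t)$ controls $\|\varrho\|_{L^2}$ and $\|\omega\|_{L^2}$, which are not part of $\MC D(t)$, a crude bound would leave a term $C\MC E(t)$ on the right and only yield exponential growth. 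The point --- and this is where the weighted norms of Theorem \ref{thm1.1} are indispensable --- is that every such coupling carries a coefficient decaying at least like $(1+|x|)^{-2}$; splitting it as $\f{1}{1+|x|}\cdot\f{O(\epsilon)}{1+|x|}$ and using Hardy's inequality $\|\tfrac{f}{1+|x|}\|_{L^2}\le C\|\ti f\|_{L^2}$ on $\R^3$ converts it into a contribution bounded by $C\epsilon\,\MC D(t)$. Taking $\epsilon$ and $\delta$ small then absorbs everything into $\tfrac c2\MC D(t)$, and integration gives
\begin{equation*}
\MC E(t)+\f c2\int_0^t\MC D(s)\,ds\le\MC E(0),
\end{equation*}
which is the asserted estimate after invoking $\MC E\sim\|(\varrho,\omega)\|_{4,3}^2$.

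\textbf{Local existence and conclusion.} Local well-posedness in $\MC H^{4,3}$ is obtained by linearizing the perturbation system and iterating; the only nonstandard point is the third-order operator $\la\ti\varrho$, but it is handled exactly as above --- it is skew-symmetric after pairing with $\ti\varrho$ and couples to $\Div\omega$ through the density equation, so the linearized energy estimates close with the same structure, while $\mu\la\omega$ supplies the parabolic smoothing for $\omega$ (alternatively one may invoke existing local theory for the viscous quantum Navier--Stokes equations). Combining local existence with the a priori bound through the standard continuity argument yields a unique global solution with $\sup_{t\ge0}\|(\varrho,\omega)(t)\|_{4,3}\le C\|(\varrho,\omega)(0)\|_{4,3}$ and $(\ti\varrho,\ti\omega)\in L^2(0,\infty;H^4\times H^3)$. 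Finally $\varrho\in C^1(0,\infty;H^2)$ and $\omega\in C^1(0,\infty;H^1)$ follow by reading $\p_t\varrho$ and $\p_t\omega$ off the equations, and $C^0$-continuity in the top spaces from the energy bound together with a standard weak-continuity/norm-continuity argument, so $(\varrho,\omega)\in\mathfrak C(0,\infty;\MC H^{4,3})$. The main obstacle is the a priori estimate, and within it precisely the step just described: organizing the bookkeeping so that the coupling to the small, spatially decaying stationary solution is genuinely absorbed by the dissipation rather than producing a growing $\MC E$-term.
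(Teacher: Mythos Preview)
Your proposal is correct and follows essentially the same route as the paper: local existence plus an a~priori estimate closed by (i) a weighted energy identity in which the Bohm term, after substituting the continuity equation for $\Div\p^\alpha\omega$, produces the extra $\|\ti\varrho\|_{H^3}$ contribution to the Lyapunov functional, (ii) the cross-term $\langle\p^\alpha\omega,\ti\p^\alpha\varrho\rangle$ to extract density dissipation, and (iii) Hardy's inequality to convert the zeroth-order couplings with the spatially decaying stationary coefficients into dissipation. The only cosmetic difference is that the paper takes the multiplier weights $\hat A,\tilde A$ to depend on the full density $\varrho+\rho^*$ rather than on $\rho^*$ alone, which makes the pressure cross-cancellation exact instead of leaving a quadratic remainder, but this does not alter the structure of the argument.
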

Finally, the decay rates of the solutions to the Cauchy problem \eqref{1.1} are stated as follows.
\begin{theorem}\label{thm1.3}
Under the assumptions of Theorem \ref{thm1.2}, and if $(\rho_0-\rho^*,\ti(\rho_0-\rho^*),m_0-m^*)\in \dot{H}^{-s}$ for some $s\in [0,\f32)$, we have
\begin{eqnarray}\label{1.3}
\|( \rho-\rho^*,\ti(\rho-\rho^*), m-m^*)(t)\|_{\dot{H}^{-s}}\leq C,
	\end{eqnarray}
for all $t\geq 0$. Moreover, the following decay rates hold
\begin{eqnarray}
  \|(\ti (\rho-\rho^*,m-m^*)(t)\|_{3,2}\leq C(1+t)^{-\f{1+s}{2}},
\end{eqnarray}
and
\begin{eqnarray}
\|({\rho-\rho^*}, m-m^*)(t)\|\leq C(1+t)^{-\f{s}{2}}.
\end{eqnarray}
\end{theorem}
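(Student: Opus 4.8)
The plan is to combine the energy estimates already secured in Theorem \ref{thm1.2} with a negative-order Sobolev estimate on the perturbation, and then close a differential inequality to extract the algebraic decay. First I would set $(\varrho,\omega,\mathfrak{m}) = (\rho-\rho^*,\, \ti(\rho-\rho^*),\, m-m^*)$ and rewrite the perturbed system as a quasilinear hyperbolic–parabolic system with the stationary solution and its derivatives appearing as (small, spatially decaying) coefficients and as forcing terms. The linearized principal part is the usual compressible Navier–Stokes–type operator augmented by the third-order quantum dispersion $\ti \triangle \ti \varrho$; the Bohm term is the source of the extra regularity on $\varrho$ (hence the appearance of $\ti\varrho$ in the $\dot H^{-s}$ hypothesis and in the decay statements). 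I would apply $\Lambda^{-s} = (-\triangle)^{-s/2}$ to the equations, pair with $\Lambda^{-s}(\varrho,\ti\varrho,\mathfrak m)$, and use the Hardy–Littlewood–Sobolev inequality together with the Kato–Ponce / fractional Leibniz estimates to bound the nonlinear and stationary-coefficient contributions. The key structural point is that every nonlinear term carries at least one factor that is controlled in $H^4\cap \dot H^{-s}$ or decays in $(1+|x|)$, so that on the time interval where Theorem \ref{thm1.2}'s a priori bound holds one gets
\[
\frac{d}{dt}\|\Lambda^{-s}(\varrho,\ti\varrho,\mathfrak m)\|^2 \le C\,\big(\text{dissipation}\big)\cdot\|\Lambda^{-s}(\varrho,\ti\varrho,\mathfrak m)\| + (\text{lower order}),
\]
which, after integrating in time and invoking $\int_0^\infty \|(\ti\varrho,\ti\omega)\|_{4,3}^2\,ds<\infty$, yields the uniform bound \eqref{1.3}.

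Next I would upgrade this to the decay rates by a bootstrap/interpolation argument in the spirit of Guo–Wang. From the basic energy inequality of Theorem \ref{thm1.2} one has, schematically, $\frac{d}{dt}\mathcal E_k(t) + c\,\mathcal D_k(t)\le 0$ where $\mathcal E_k \sim \|(\varrho,\omega)\|_k^2$ (with the quantum term contributing an extra derivative on $\varrho$ in the dissipation $\mathcal D_k$, i.e.\ $\mathcal D_k \gtrsim \|\ti\varrho\|_{k}^2 + \|\ti\omega\|_{k-1}^2$). I would then use the Gagliardo–Nirenberg interpolation
\[
\|\ti^{\ell}f\| \le C\,\|\Lambda^{-s}f\|^{\theta}\,\|\ti^{\ell+1}f\|^{1-\theta},\qquad \theta = \frac{1}{\ell+1+s},
\]
to convert the uniform $\dot H^{-s}$ bound into a lower bound $\mathcal D_k \gtrsim (\mathcal E_k^{\rm high})^{1+1/(k+s)}$ for the highest-order part of the energy, giving the differential inequality $\frac{d}{dt}\mathcal E + c\,\mathcal E^{1+1/(k+s)} \le 0$ and hence $\mathcal E(t)\lesssim (1+t)^{-(k+s)}$ for the top-order piece. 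Iterating this down the derivative ladder (a standard induction on the order, peeling off one derivative at a time and feeding back the decay already obtained for the higher-order norms) produces $\|\ti(\rho-\rho^*,m-m^*)(t)\|_{3,2}\le C(1+t)^{-(1+s)/2}$ and then, using the lowest-order energy identity together with this gradient decay, $\|(\rho-\rho^*,m-m^*)(t)\|\le C(1+t)^{-s/2}$.

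The main obstacle I anticipate is twofold. First, the stationary solution $(\sigma^*,u^*)$ is not zero: it enters the perturbed equations both as a variable coefficient in the principal part and as an inhomogeneous forcing through $G$ and $F$, so one must verify that these contributions are genuinely absorbable — this is where the weighted $\dot{\mathcal F}^{4,5}_\epsilon$ bounds of Theorem \ref{thm1.1} (in particular the $(1+|x|)$-weights and the smallness $\le c_0\epsilon$) are essential, and checking that the $\dot H^{-s}$ pairings against terms like $u^*\cdot\ti\varrho$ or $\varrho\,\ti P(\rho^*)$ do not lose decay requires using the spatial-decay weights to trade an $L^\infty$-with-weight factor on the stationary part against an $L^1\hookrightarrow \dot H^{-3/2}$-type control — hence the restriction $s<3/2$. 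Second, propagating the negative norm \emph{uniformly in time} while simultaneously running the interpolation bootstrap is delicate because the nonlinear terms in $\frac{d}{dt}\|\Lambda^{-s}\cdot\|^2$ must be bounded by the dissipation times a quantity that is itself only known to decay a posteriori; the standard fix is to first establish \eqref{1.3} on its own (using only the time-integrability of the dissipation from Theorem \ref{thm1.2}, not its decay), and only then close the decay hierarchy. The quantum/Bohm third-order term, being linear and skew in the appropriate energy, does not itself create trouble — it actually helps, by supplying the extra $\ti\varrho$ dissipation that makes the density decay at the same rate as the momentum.
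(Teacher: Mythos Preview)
Your treatment of the uniform $\dot H^{-s}$ bound \eqref{1.3} is essentially the paper's: apply $\Lambda^{-s}$ to the $(\varrho,\mathcal M)$ system, pair, control the commutator and nonlinear terms via Hardy--Littlewood--Sobolev and the weighted $L^\infty$ bounds on $(\sigma^*,u^*)$, and then integrate in time using the dissipation integral from Theorem~\ref{thm1.2}. The paper also needs a second estimate (pairing $\Lambda^{-s}$ of the momentum equation with $\rho^*\Lambda^{-s}\nabla\varrho$) to produce the term $\|\Lambda^{-s}\nabla\varrho\|^2$ on the dissipation side; you allude to this with the $\nabla\varrho$ component but should make the cross pairing explicit.

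For the decay rates, however, your route diverges from the paper's. You propose the pure Guo--Wang scheme: establish $\tfrac{d}{dt}\mathcal E_k + c\,\mathcal D_k\le 0$ at each level $k\ge 1$, interpolate $\mathcal E_k$ between $\dot H^{-s}$ and $\mathcal D_k$, and solve the resulting ODE. The paper instead proves only a \emph{Lyapunov-type} inequality at level $1$,
\[
\frac{d}{dt}\mathcal E(t)+C_1\|\nabla(\varrho,\omega)\|_{3,2}^2\le C\|\nabla(\varrho,\omega)\|^2,
\]
with an \emph{unabsorbed} lowest-order remainder on the right, and then closes the decay by an entirely different mechanism: it rewrites the equations in $(\varrho,\mathcal M)$ form, invokes the $L^p$--$L^q$ semigroup estimates for the linearized Navier--Stokes--Korteweg operator (Lemma~\ref{la3.1}), applies Duhamel to bound $\|\nabla(\varrho,\mathcal M)(t)\|$ by $(1+t)^{-(1+s)/2}$ plus a time-convolution of the nonlinearity, and bootstraps on $N(t)=\sup_{\tau\le t}(1+\tau)^{1+s}\|\nabla(\varrho,\mathcal M)\|_{3,2}^2$ via Gr\"onwall.

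This is not a cosmetic difference. Your scheme requires $\tfrac{d}{dt}\mathcal E_1 + c\,\mathcal D_1\le 0$ with \emph{no} lower-order remainder, and Theorem~\ref{thm1.2} does not supply this: its energy hierarchy is built from $k=0$ upward, and when one starts at $k=1$ the cross-term estimate (your source of density dissipation) throws off terms like $\|\nabla\omega\|^2$ and $\|\nabla\varrho\|^2$ with $O(1)$ coefficients---see the paper's own Lemma~\ref{laB4.3}. To run the interpolation ODE you would have to go back through every estimate in Lemmas~\ref{As3.2}--\ref{As3.3} at level $k\ge 1$ and show, using Hardy and the $(1+|x|)$-weights on $(\sigma^*,u^*)$, that each such remainder can in fact be pushed to $\delta\,\mathcal D_1$. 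That may be feasible, but it is nontrivial additional work that your proposal glosses over; the paper's Duhamel/semigroup route is chosen precisely to sidestep it. If you retain your approach, the missing ingredient to supply is a genuine level-$1$ closed energy inequality around the nontrivial stationary state.
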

Note that Lemma \ref{Ala3.3} implies that for $p\in (1,2]$, $L^p\subset \dot{H}^{-s}$ with $s=3(\f1p-\f12)\in [0,\f32)$, which implies the following decay result:
\begin{remark}
If we replace the $\dot{H}^{-s}$ assumptions in Theorem \ref{thm1.3} by $(\rho_0-\rho^*,\ti(\rho_0-\rho^*),m_0-m^*)\in L^{p}$ for $p\in (1,2]$, then we have
\begin{eqnarray*}
\|(\ti (\rho-\rho^*,m-m^*)(t)\|_{3,2}\leq C(1+t)^{-\f32(\f1p-\f12)-\f 12},
\end{eqnarray*}
and
\begin{eqnarray*}
\|({\rho-\rho^*}, m-m^*)(t)\|\leq C(1+t)^{-\f32(\f1p-\f12)}.
\end{eqnarray*}
\end{remark}
There is much literature concerned with the asymptotic behaviors of the solutions for the fluid dynamics. For the convergence rates for the compressible Navier-Stokes equations with with potential
forces, see, for example, the works in \cite{K1}-\cite{K4}. When the external force is given by the general form $F=\Div F_1+F_2$, the stationary solution is nontrivial in general, for the compressible Navier-Stokes equations, cf. \cite{[jj1]}-\cite{[jj3]}. For the compressible Navier-Stokes-Poisson equations, cf. \cite{HCZT}. For the compressible Navier-Stokes-Korteweg equations, cf. \cite{CZ}.

For the compressible quantum model, the time decay rates for higher-order spatial derivatives of solutions to the full compressible quantum model were established by Pu and Xu \cite{PX1} when the initial perturbation belongs to $(H^{N+2}\cap \dot{H}^{-s})\times (H^{N+1}\cap  \dot{H}^{-s})\times(H^N\cap \dot{H}^{-s})$ ($N\geq 3, s\in [0,\f{3}{2})$). Xie, Xi and Guo \cite{[XXG]} deduced faster time convergence rates for higher-order spatial derivatives of solutions when the initial data belongs to $L^1$, which improve the work of \cite{PG}. For the compressible quantum MHD system, the global existence and decay of classical solutions near the constant steady solution were established by Pu and Xu \cite{sPX1}. The time decay rates for higher-order spatial derivatives of solutions when the initial perturbation belongs to $\dot{H}^{-s}$ and $L^1$ were established in \cite{XXY} and \cite{xpg}, respectively. For the time decay rates for the compressible quantum Hall-magnetohydrodynamic model, one can refer to \cite{xpg2}. When the external force is considered, Xu and Pu \cite{XP} proved the optimal convergence rates of the magnetohydrodynamic model for quantum plasmas with potential force.


In this paper, we aim to establish the existence and stability of stationary solutions to the compressible quantum model with the general force $F=\Div F_1+F_2$ and the time decay rates of the solutions when the initial perturbation belongs to $\dot{H}^{-s}$ with $0 \leq s < \f32$. Compared with the classical hydrodynamic equations, the appearance of the higher-order quantum term will lead to new difficulties. First, in the same spirit as \cite{[jj1],[jj2]}, the weighted $L^2$ theory for the linearized equation of \eqref{aA1.1} can be established by elaborate analysis. Then, together with the weighted $L^\infty$ estimates on
solutions to the linearized problem \eqref{aA1.2}, Theorem \ref{thm1.1} can be proved by the contraction mapping principle. As for the nonlinear stability of the stationary solution, based on
the properties obtained on the stationary solution and some delicate estimates, we can deduce the a prior estimates.
Another purpose of this paper is to derive the negative Sobolev norm estimates and the decay rates
of the solution towards the stationary solution. Since the stationary
solution is nontrivial, we consider equation \eqref{4.1} to obtain the decay rates of the solution by using the $L^p$-$L^q$
estimates.

The rest of the paper is organized as follows. In Section 2, some useful lemmas are given, which will be used throughout this paper. In Section 3, we establish the existence and uniqueness of stationary solutions of the problem \eqref{st1.1}. In Section 4, the stability of the stationary solution is obtained. In the last section, we aim to prove the decay rates for the nonlinear system.
\section{Preliminaries}
\begin{lemma}[Gagliardo-Nirenberg inequality]\label{fflma2.1}
Let $0\leq m,\alpha\leq l$ and the function $f\in C_0^\infty(\R^3)$, then we have
\begin{eqnarray*}
			&&\|\ti^\alpha f\|_{L^p}\leq\|\ti^m f\|_{L^2}^{1-\theta}\|\ti^l f\|_{L^2}^\theta,
		\end{eqnarray*}
where $0\leq \theta \leq 1$ and $\alpha$ satisfies
\begin{eqnarray*}
			\f 1p-\f \alpha 3=\bigg(\f 12-\f m3\bigg)(1-\theta)+\bigg(\f 12-\f l3\bigg)\theta.
		\end{eqnarray*}
\end{lemma}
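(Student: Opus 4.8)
The plan is to deduce Lemma~\ref{fflma2.1} from two classical facts: the homogeneous Sobolev embedding $\dot H^\gamma(\R^3)\hookrightarrow L^p$ and the elementary log-convexity of $L^2$-based norms in the frequency variable. A constant $C=C(m,\alpha,l,p)$ is understood on the right-hand side, and by a density argument it suffices to treat Schwartz $f$, so that all the norms below are a priori finite; I write $\Lambda^s$ for the Fourier multiplier with symbol $|\xi|^s$, which coincides with a genuine differential operator of order $s$ when $s\in\mathbb N$, and recall $\|\Lambda^s f\|=\||\xi|^s\hat f\|$ by Plancherel.

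First I would record the frequency-interpolation step: for reals $a\le b$, $\tau\in[0,1]$ and $\beta=(1-\tau)a+\tau b$, writing $|\xi|^{2\beta}=(|\xi|^{2a})^{1-\tau}(|\xi|^{2b})^{\tau}$ and applying H\"older's inequality in $\xi$ gives
\[
\|\Lambda^\beta f\|\le\Big(\int|\xi|^{2a}|\hat f|^2\,d\xi\Big)^{\frac{1-\tau}{2}}\Big(\int|\xi|^{2b}|\hat f|^2\,d\xi\Big)^{\frac{\tau}{2}}=\|\Lambda^a f\|^{1-\tau}\|\Lambda^b f\|^{\tau}.
\]
Taking $a=m$, $b=l$, $\tau=\theta$ and $\beta:=(1-\theta)m+\theta l$ yields $\|\Lambda^\beta f\|\le\|\ti^m f\|^{1-\theta}\|\ti^l f\|^{\theta}$; this already proves the case $p=2$, since there the exponent relation forces $\alpha=\beta$.

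Next, for $2\le p<\infty$ I would trade regularity for integrability: set $\gamma:=\tfrac32-\tfrac3p\in[0,\tfrac32)$ and invoke the Sobolev/Hardy--Littlewood--Sobolev embedding $\|g\|_{L^p}\le C\|\Lambda^\gamma g\|$, applied to $g=\ti^\alpha f$, to obtain $\|\ti^\alpha f\|_{L^p}\le C\|\Lambda^{\alpha+\gamma}f\|$. The bookkeeping to verify is that $\alpha+\gamma=\beta$, which is exactly the stated relation $\tfrac1p-\tfrac\alpha3=\tfrac12-\tfrac{(1-\theta)m+\theta l}{3}$. Combining with the previous step gives $\|\ti^\alpha f\|_{L^p}\le C\|\Lambda^\beta f\|\le C\|\ti^m f\|^{1-\theta}\|\ti^l f\|^{\theta}$, which is the assertion (with $\ti^\alpha f$ a true derivative whenever $\alpha\in\mathbb N$, the case relevant later).

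The proof has no deep obstacle; the point needing care is the exponent arithmetic, namely that $\gamma=\beta-\alpha$ must lie in $[0,\tfrac32)$, which forces $p\ge2$ and $\theta\ge(\alpha-m)/(l-m)$ — precisely the regime in which the inequality holds and in which it is invoked later (interpolating $\|\ti^\alpha f\|_{L^p}$ with $p\ge2$ between the $L^2$ norms of a lower- and a higher-order derivative of $f$). One also has to run the density/approximation argument to justify finiteness, and to observe that $C$ depends only on $m,\alpha,l,p$ and not on $f$. As an alternative I would mention a pure Littlewood--Paley proof valid for $2\le p<\infty$: decompose $f=\sum_j\dot\Delta_j f$, use Bernstein's inequality to get $\|\ti^\alpha\dot\Delta_j f\|_{L^p}\le C\,2^{j\beta}\|\dot\Delta_j f\|$, and split the sum over $j$ at the dyadic scale balancing the low-derivative and high-derivative contributions.
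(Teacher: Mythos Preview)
The paper does not prove Lemma~\ref{fflma2.1}; it simply records the Gagliardo--Nirenberg inequality as a standard preliminary, so there is no ``paper's own proof'' to compare against. Your argument via Plancherel log-convexity plus the homogeneous Sobolev embedding $\dot H^\gamma(\R^3)\hookrightarrow L^p$ is a clean and correct derivation in the range $p\ge 2$, and you are right that this is the only regime actually invoked later in the paper. The one caveat is that the lemma as stated does not restrict $p$, and the exponent relation permits $p<2$ (take $\alpha>(1-\theta)m+\theta l$), a case your Sobolev-embedding step does not cover; for the full range one needs the genuine Gagliardo--Nirenberg machinery (or your Littlewood--Paley alternative with a Bernstein step in the other direction). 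Since the paper never uses $p<2$, your proof suffices for all applications here.
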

\begin{lemma}[see \cite{PG1}]\label{flma2.3}
Let $m\geq 1$ be an integer and define the commutator
\begin{eqnarray*}
[\ti^m,f]g=\ti^m(fg)-f\ti^m g.
\end{eqnarray*}
Then there exists some constant $C>0$ such that
\begin{eqnarray*}
&&\|\ti^m(fg)\|_{L^p}\leq C(\|f\|_{L^{p_1}}\|g\|_{\dot{H}^{m,p_2}}+\|g\|_{L^{p_3}}\|f\|_{\dot{H}^{m,p_4}}),\\
&&\|[\ti^m,f]g\|_{L^p}\leq C(\|\ti f\|_{L^{p_1}}\|g\|_{\dot{H}^{{m-1},p_2}}+\|g\|_{L^{p_3}}\|f\|_{\dot{H}^{m,p_4}}),
\end{eqnarray*}
where $f,g\in \mathcal{S}$, and $p_2,p_4\in (1,\infty)$ such that
\begin{eqnarray*}
\f1p=\f1{p_1}+\f1{p_2}=\f1{p_3}+\f1{p_4}.
\end{eqnarray*}
\end{lemma}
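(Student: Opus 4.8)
The plan is to exploit that $m$ is a positive \emph{integer}, so that both inequalities reduce, via the ordinary Leibniz rule, to estimating products of the form $\ti^{m-k}f\,\ti^{k}g$ in $L^p$; H\"older's inequality handles the extreme terms exactly, and the general $L^p$ Gagliardo--Nirenberg interpolation (of which the $L^2$ case is Lemma \ref{fflma2.1}) together with Young's inequality handles the intermediate ones. A frequency-space proof through Bony's paraproduct decomposition $fg=T_fg+T_gf+R(f,g)$ and Littlewood--Paley square functions would also work and is the standard route for fractional orders, but since here $m\in\mathbb{Z}_{\ge1}$ the elementary argument is cleaner and stays within the tools already introduced.

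For the product estimate I would begin from the (schematic) Leibniz expansion
\[
\ti^m(fg)=\sum_{k=0}^{m}\binom{m}{k}\,\ti^{m-k}f\,\ti^{k}g,
\]
where $\ti^j$ stands for the collection of all $j$-th order derivatives. The endpoint $k=m$ gives $f\,\ti^m g$, bounded by $\|f\|_{L^{p_1}}\|\ti^m g\|_{L^{p_2}}=\|f\|_{L^{p_1}}\|g\|_{\dot{H}^{m,p_2}}$ through H\"older with $\f1p=\f1{p_1}+\f1{p_2}$, and the endpoint $k=0$ gives $\|g\|_{L^{p_3}}\|f\|_{\dot{H}^{m,p_4}}$. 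For each $0<k<m$ I would split $\f1p=\f1{a_k}+\f1{b_k}$ by H\"older and interpolate
\[
\|\ti^{m-k}f\|_{L^{a_k}}\le C\|f\|_{L^{p_1}}^{k/m}\|\ti^m f\|_{L^{p_4}}^{(m-k)/m},\qquad
\|\ti^{k}g\|_{L^{b_k}}\le C\|g\|_{L^{p_3}}^{(m-k)/m}\|\ti^m g\|_{L^{p_2}}^{k/m}.
\]
Multiplying these and applying Young's inequality with conjugate exponents $m/k$ and $m/(m-k)$ collapses the product into $C\big(\|f\|_{L^{p_1}}\|g\|_{\dot{H}^{m,p_2}}+\|g\|_{L^{p_3}}\|f\|_{\dot{H}^{m,p_4}}\big)$, which is the claim.

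The commutator estimate uses the same scheme after the top-order term cancels: since $[\ti^m,f]g=\ti^m(fg)-f\,\ti^m g$, the $k=m$ summand drops out, leaving
\[
[\ti^m,f]g=\sum_{k=0}^{m-1}\binom{m}{k}\,\ti^{m-k}f\,\ti^{k}g,
\]
so every surviving term carries at least one derivative on $f$ and at most $m-1$ on $g$. The endpoint $k=m-1$ yields $\|\ti f\|_{L^{p_1}}\|g\|_{\dot{H}^{m-1,p_2}}$ and the endpoint $k=0$ yields $\|g\|_{L^{p_3}}\|f\|_{\dot{H}^{m,p_4}}$, matching the right-hand side; for $0<k<m-1$ I would again combine H\"older with Gagliardo--Nirenberg, now interpolating $\ti^{m-k}f$ between $\ti f$ and $\ti^m f$ and $\ti^{k}g$ between $g$ and $\ti^{m-1}g$, and finish with Young.

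The only genuine obstacle is the exponent bookkeeping in the interpolation step. For each $k$ one must choose $a_k,b_k$ with $\f1{a_k}+\f1{b_k}=\f1p$ so that the two Gagliardo--Nirenberg scaling identities hold simultaneously with the prescribed interpolation weights. A short scaling computation shows this is automatic: the dimensional contributions to $\f1{a_k}$ and $\f1{b_k}$ cancel, leaving $\f1{a_k}=\tfrac{k}{m}\f1{p_1}+\tfrac{m-k}{m}\f1{p_4}$ and $\f1{b_k}=\tfrac{m-k}{m}\f1{p_3}+\tfrac{k}{m}\f1{p_2}$ (with the analogous pair in the commutator case), whose sum is exactly $\f1p$ precisely because of the hypotheses $\f1p=\f1{p_1}+\f1{p_2}=\f1{p_3}+\f1{p_4}$. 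The restriction $p_2,p_4\in(1,\infty)$ is what guarantees the intermediate $L^p$ Gagliardo--Nirenberg inequalities are valid, the endpoints $1$ and $\infty$ being excluded. Since every step is continuous in the relevant norms and $f,g\in\mathcal{S}$, the estimates hold as stated.
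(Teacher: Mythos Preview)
Your argument is correct and is the standard route for integer $m$: Leibniz expansion, H\"older at the endpoints, Gagliardo--Nirenberg interpolation for the intermediate terms with the exponents you compute, and Young's inequality to recombine. The paper itself does not prove this lemma; it is stated with a citation to \cite{PG1} as a known result, so there is no ``paper's proof'' to compare against beyond noting that your proposal supplies exactly the elementary argument one expects behind such a citation.
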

Next, we will give some useful definitions related to the negative Sobolev norms.
\begin{definition}
The operator $\Lambda^s$ is defined by
\begin{eqnarray*}
			\Lambda^{-s}g(x)=\int_{\R^3}|\xi|^s\hat{g}(\xi)e^{2\pi ix\cdot \xi}d\xi,
		\end{eqnarray*}
where $s\in \R$ and $\hat{g}$ is the Fourier transform of $g$.
\end{definition}
\begin{definition}
    $\dot{H}^s$ is defined as the homogeneous Sobolev space of $g$, with the norm
\begin{eqnarray*}
    \|g\|_{\dot{H}^s}:=\|\Lambda^sg\|_{L^2}=\||\xi|^sg\|_{L^2}.		
\end{eqnarray*}
\end{definition}
The index $s$ can be non-positive real numbers. In the sequel, we will use the index $-s$ with $s\geq 0$ for convenience.

The following lemmas are useful during the proof for decay rates of the solutions, see \cite{W5}, \cite{SEN} and \cite{K4}.
\begin{lemma}\label{Ala3.2}
Let $s\geq 0$ and $l\geq0$, then we have
\begin{eqnarray*}
    \|\ti^l g\|_{L^2}\leq\|\ti^{l+1}g\|_{L^2}^{1-\theta}\|g\|_{\dot{H}^{-s}}^\theta, ~\mathrm{where}~ \theta=\f{1}{l+s+1}.	
\end{eqnarray*}
\end{lemma}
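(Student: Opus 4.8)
\textbf{Proof proposal for Lemma \ref{Ala3.2}.}

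The plan is to prove this interpolation inequality by a direct Fourier-analytic argument combined with H\"older's inequality, exactly as in the classical references cited (\cite{W5}, \cite{SEN}, \cite{K4}). First I would write everything in terms of the Fourier transform: by Plancherel's theorem, $\|\ti^l g\|_{L^2}^2 \sim \int_{\R^3} |\xi|^{2l} |\hat g(\xi)|^2\, d\xi$, while $\|\ti^{l+1}g\|_{L^2}^2 \sim \int |\xi|^{2(l+1)}|\hat g|^2\,d\xi$ and $\|g\|_{\dot H^{-s}}^2 \sim \int |\xi|^{-2s}|\hat g|^2\,d\xi$. The key pointwise observation is the algebraic identity $|\xi|^{2l} = \big(|\xi|^{2(l+1)}\big)^{1-\theta}\big(|\xi|^{-2s}\big)^{\theta}$, which forces the exponent relation $2l = 2(l+1)(1-\theta) - 2s\theta$, i.e. $l = (l+1)(1-\theta) - s\theta = l+1 - \theta(l+1+s)$, giving precisely $\theta = \frac{1}{l+s+1}$. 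So the exponent in the statement is exactly the one dictated by scaling/homogeneity.

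Next I would split the frequency integral at a threshold and apply H\"older's inequality with conjugate exponents $\frac{1}{1-\theta}$ and $\frac{1}{\theta}$ to the factorization above:
\begin{eqnarray*}
\int_{\R^3} |\xi|^{2l}|\hat g(\xi)|^2\,d\xi
&=&\int_{\R^3} \Big(|\xi|^{2(l+1)}|\hat g(\xi)|^2\Big)^{1-\theta}\Big(|\xi|^{-2s}|\hat g(\xi)|^2\Big)^{\theta}\,d\xi\\
&\leq&\Big(\int_{\R^3}|\xi|^{2(l+1)}|\hat g(\xi)|^2\,d\xi\Big)^{1-\theta}\Big(\int_{\R^3}|\xi|^{-2s}|\hat g(\xi)|^2\,d\xi\Big)^{\theta},
\end{eqnarray*}
where I have split $|\hat g|^2 = |\hat g|^{2(1-\theta)}\cdot|\hat g|^{2\theta}$ and recombined. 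Taking square roots and using Plancherel to translate back to the physical-side norms yields $\|\ti^l g\|_{L^2}\leq \|\ti^{l+1}g\|_{L^2}^{1-\theta}\|g\|_{\dot H^{-s}}^{\theta}$, which is the claim. The constant is $1$ up to the harmless Plancherel normalization constants absorbed into $\sim$.

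I do not expect a genuine obstacle here; the only point requiring a word of care is the treatment of low frequencies when $s>0$, where $|\xi|^{-2s}$ is singular at the origin — but this is harmless because the inequality is only asserted for $g$ with $g\in\dot H^{-s}$, i.e. exactly those $g$ for which $\int|\xi|^{-2s}|\hat g|^2\,d\xi<\infty$, so the right-hand side is finite by hypothesis and H\"older applies verbatim. One should also note the degenerate case $s=0$, where the statement reduces to the trivial Gagliardo--Nirenberg-type bound $\|\ti^l g\|_{L^2}\leq \|\ti^{l+1}g\|_{L^2}^{l/(l+1)}\|g\|_{L^2}^{1/(l+1)}$, already contained in Lemma \ref{fflma2.1}. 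Thus the full statement follows in all cases.
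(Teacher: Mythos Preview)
Your argument is correct: the Plancherel--H\"older computation with the exponent identity $|\xi|^{2l}=\big(|\xi|^{2(l+1)}\big)^{1-\theta}\big(|\xi|^{-2s}\big)^{\theta}$ is exactly the standard proof, and it is what the cited references \cite{W5}, \cite{SEN}, \cite{K4} contain. The paper itself does not supply a proof of this lemma at all --- it merely states the result and defers to those references --- so there is nothing to compare against beyond noting that your proof is the intended one. (One cosmetic remark: you mention ``split the frequency integral at a threshold'' but then, correctly, do not split anything; H\"older applies globally and no threshold is needed, so you may want to delete that phrase.)
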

\begin{lemma}\label{Ala3.3}
Let $0< s< 3, 1< p< q < \infty, 1/q+s/3=1/p$, then we have
\begin{eqnarray*}
    \|\Lambda^{-s}g\|_{L^q}\leq\|g\|_{L^p}.
\end{eqnarray*}
\end{lemma}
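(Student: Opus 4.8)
The plan is to recognize $\Lambda^{-s}$ as a constant multiple of the Riesz potential and then to prove the stated bound, which is exactly the Hardy--Littlewood--Sobolev inequality, by the real-variable method built on the Hardy--Littlewood maximal function. Consistently with the definition $\|g\|_{\dot H^s}=\||\xi|^s\hat g\|_{L^2}$, the Fourier multiplier of $\Lambda^{-s}$ is $|\xi|^{-s}$, and since the inverse Fourier transform of $|\xi|^{-s}$ on $\R^3$ is a constant times $|x|^{-(3-s)}$, we have up to an absolute constant
\[
\Lambda^{-s}g(x)=c_s\int_{\R^3}\f{g(y)}{|x-y|^{3-s}}\,dy .
\]
Thus it suffices to establish $\big\|\,|x|^{-(3-s)}*g\,\big\|_{L^q}\le C\|g\|_{L^p}$ under the scaling relation $\f1q+\f s3=\f1p$. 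The hypothesis $0<s<3$ guarantees $3-s\in(0,3)$, so the kernel is locally integrable and decays at infinity, and all integrals below make sense.

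First I would fix $x$ and a free parameter $R>0$ and split the convolution into a near part $\int_{|y|<R}$ and a far part $\int_{|y|\ge R}$. For the near part, a dyadic decomposition of the ball $\{|y|<R\}$ together with the definition of the maximal function $Mg$ yields the pointwise bound $\int_{|y|<R}|y|^{-(3-s)}|g(x-y)|\,dy\le C R^{s}Mg(x)$, the geometric series converging precisely because $s>0$. For the far part, H\"older's inequality with exponents $p$ and $p'$ gives $\int_{|y|\ge R}|y|^{-(3-s)}|g(x-y)|\,dy\le C\|g\|_{L^p}R^{s-3/p}$, where the tail integral $\int_{|y|\ge R}|y|^{-(3-s)p'}\,dy$ converges exactly because $\f1p>\f s3$, which follows from $\f1p=\f1q+\f s3$. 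Optimizing the resulting estimate $|\Lambda^{-s}g(x)|\le C\big(R^{s}Mg(x)+R^{s-3/p}\|g\|_{L^p}\big)$ over $R$ by balancing the two terms, and using $\f{ps}{3}=1-\f pq$, leads to
\[
|\Lambda^{-s}g(x)|\le C\,\|g\|_{L^p}^{\,1-p/q}\,\big(Mg(x)\big)^{p/q}.
\]

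Finally I would take the $L^q$ norm of this pointwise inequality. Since $\big\|(Mg)^{p/q}\big\|_{L^q}=\|Mg\|_{L^p}^{p/q}$, the Hardy--Littlewood maximal theorem (applicable because $p>1$) gives $\|Mg\|_{L^p}\le C\|g\|_{L^p}$, and therefore
\[
\|\Lambda^{-s}g\|_{L^q}\le C\,\|g\|_{L^p}^{\,1-p/q}\,\|Mg\|_{L^p}^{p/q}\le C\,\|g\|_{L^p},
\]
which is the claim. The one genuinely delicate point is the exponent bookkeeping: one must verify that the scaling identity $\f1q+\f s3=\f1p$ makes the $R$-homogeneities match so that the optimization produces exactly the power $p/q$ on $Mg$, and that this same identity simultaneously ensures convergence of the far-part tail (via $\f1p>\f s3$) and of the near-part dyadic sum (via $s>0$). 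The restriction $p>1$ is essential and unavoidable, as it is precisely what makes the maximal operator bounded on $L^p$; at the endpoint $p=1$ one obtains only a weak-type estimate rather than the strong bound asserted here.
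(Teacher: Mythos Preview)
Your proof is correct and complete: it is the standard Hedberg pointwise estimate for the Riesz potential, followed by the Hardy--Littlewood maximal theorem, and your exponent bookkeeping (convergence of the near-part dyadic sum via $s>0$, convergence of the far-part tail via $1/p>s/3$, and the optimization producing the power $p/q$) is all accurate.

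The paper itself does not prove this lemma; it simply states it as a known result and cites Stein's \emph{Singular Integrals} together with \cite{W5} and \cite{K4}. Your argument is essentially the proof given in Stein's book, so there is no divergence in approach to discuss---you have supplied the proof that the paper outsources to the literature.
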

\begin{lemma}\label{la3.2}
Let $r_1>1$ and $r_2\in[0,r_1]$, then there exists a constant $C_1=\f{2^{r_2+1}}{r_1-1}$ such that
\begin{eqnarray*}
  \int_0^t(1+t-\tau)^{-r_1}(1+\tau)^{-r_2}d\tau\leq C_1(1+t)^{-r_2}.
\end{eqnarray*}
\end{lemma}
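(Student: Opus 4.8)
The plan is to prove the inequality by the standard device of splitting the time-convolution integral at the midpoint and estimating the two halves separately, arranging matters so that in \emph{each} half the convergence of the $\tau$-integral always comes from the factor carrying the exponent $r_1>1$ (the only exponent guaranteed integrable on a half-line), never from $(1+\tau)^{-r_2}$, whose exponent may lie in $[0,1]$ and hence fail to be integrable. Concretely, I would write $\int_0^t=\int_0^{t/2}+\int_{t/2}^t$ and first record three elementary pointwise comparisons: on $[0,t/2]$ one has $t-\tau\geq t/2$, so $1+t-\tau\geq\f{1+t}{2}$, and also $t-\tau\geq\tau$, so $1+t-\tau\geq 1+\tau$; on $[t/2,t]$ one has $1+\tau\geq\f{1+t}{2}$.

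For the piece over $[0,t/2]$, I would split the power as $(1+t-\tau)^{-r_1}=(1+t-\tau)^{-r_2}(1+t-\tau)^{-(r_1-r_2)}$, which is legitimate since $r_2\in[0,r_1]$. From $1+t-\tau\geq\f{1+t}{2}$ and $r_2\geq 0$ the first factor is at most $2^{r_2}(1+t)^{-r_2}$, which supplies the target decay. For the second factor I would use $1+t-\tau\geq 1+\tau$ together with $r_1-r_2\geq 0$ to get $(1+t-\tau)^{-(r_1-r_2)}\leq(1+\tau)^{-(r_1-r_2)}$, so the leftover integrand is bounded by $(1+\tau)^{-(r_1-r_2)}(1+\tau)^{-r_2}=(1+\tau)^{-r_1}$. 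Since $r_1>1$, this integrates over $[0,t/2]$ by no more than $\int_0^\infty(1+\tau)^{-r_1}d\tau=\f{1}{r_1-1}$, giving the bound $\f{2^{r_2}}{r_1-1}(1+t)^{-r_2}$ for this piece.

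For the piece over $[t/2,t]$, I would use $1+\tau\geq\f{1+t}{2}$ to extract $(1+\tau)^{-r_2}\leq 2^{r_2}(1+t)^{-r_2}$, and then substitute $s=t-\tau$, so that $\int_{t/2}^t(1+t-\tau)^{-r_1}d\tau=\int_0^{t/2}(1+s)^{-r_1}ds\leq\f{1}{r_1-1}$, again using $r_1>1$. This bounds the second piece by $\f{2^{r_2}}{r_1-1}(1+t)^{-r_2}$ as well. Adding the two estimates produces the constant $\f{2^{r_2}}{r_1-1}+\f{2^{r_2}}{r_1-1}=\f{2^{r_2+1}}{r_1-1}$, which is exactly the claimed $C_1$.

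The computation is elementary, so there is no genuine ``hard part''; the one point I would be careful about is never to try to integrate $(1+\tau)^{-r_2}$ to a finite constant, since $r_2$ can be as small as $0$. The whole arrangement is engineered so that all integrability is routed through the $r_1>1$ factor, while the slowly decaying $(1+\tau)^{-r_2}$ factor is only ever bounded pointwise. The two comparisons on $[0,t/2]$ (namely $1+t-\tau\geq\f{1+t}{2}$ and $1+t-\tau\geq 1+\tau$) are precisely what allow the first half to be handled without ever invoking integrability of $(1+\tau)^{-r_2}$, and getting the sharp constant $2^{r_2+1}$ is just a matter of matching the two halves.
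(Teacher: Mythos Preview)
Your argument is correct and yields exactly the stated constant $C_1=\dfrac{2^{r_2+1}}{r_1-1}$. The paper does not actually supply a proof of this lemma; it is stated in the preliminaries with a citation to \cite{W5}, \cite{SEN} and \cite{K4}, so there is nothing to compare against beyond noting that your midpoint-split argument is precisely the standard one used in those references.
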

\section{Existence of stationary solutions}
For the stationary problem \eqref{st1.1}, we consider $\sigma=\rho-\bar{\rho}$, and set $\bar{\rho}=\gamma=1$ for simplicity,
\begin{equation}\label{aA1.1}
		\left\{ \begin{aligned}
			&\mathrm{div}u+\f{u}{\rho}\ti\sigma=\f{G}{\rho},\\
 			&\rho u\cdot\ti u-(\mu\Delta u+(\mu+\lambda)\ti\Div u)+\ti\sigma-\f{\hbar^2}{4}\ti\Delta\sigma\\
 &=\rho F-uG-(P'(\rho)-1)\ti\sigma-\f{\hbar^2}{4}\bigg(\f{|\ti \sigma|^2\ti \sigma}{\rho^2}-\f{\ti \sigma \triangle \sigma}{\rho}-\f{\ti \sigma\cdot \ti^2\sigma}{\rho}\bigg).
		\end{aligned} \right.
	\end{equation}
In order to construct a solution to the stationary problem \eqref{st1.1} by the contraction mapping principle in $\dot{\mathcal{F}}^{4,5}_{\epsilon}$, we consider the following iteration system:
\begin{equation}\label{aA1.2}
		\left\{ \begin{aligned}
			&\mathrm{div}u+\f{\tilde{u}}{1+\tilde{\sigma}}\ti\sigma=g,\\
 			&-(\mu\Delta u+(\mu+\lambda)\ti\Div u)+\ti\sigma-\f{\hbar^2}{4}\ti\Delta\sigma=- \tilde{u}\cdot\ti \tilde{u}+\tilde{f},
		\end{aligned} \right.
	\end{equation}
where $(\tilde{\sigma},\tilde{u})\in\dot{\mathcal{F}}^{4,5}_{\epsilon}$ is given, and
\begin{equation*}
		\left\{ \begin{aligned}
			&g=\f{G}{1+\tilde{\sigma}},\\
 			&\tilde{f}=-\tilde{\sigma} \tilde{u}\cdot\ti \tilde{u}+(1+\tilde{\sigma}) F-\tilde{u}G-(P'(1+\tilde{\sigma})-1)\ti\tilde{\sigma}
 -\f{\hbar^2}{4}\bigg(\f{|\ti \tilde{\sigma}|^2\ti \tilde{\sigma}}{(1+\tilde{\sigma})^2}-\f{\ti \tilde{\sigma} \triangle \tilde{\sigma}}{1+\tilde{\sigma}}-\f{\ti \tilde{\sigma}\cdot \ti^2\tilde{\sigma}}{1+\tilde{\sigma}}\bigg).
		\end{aligned} \right.
	\end{equation*}
The proof of the existence and uniqueness of stationary solutions to the problem \eqref{st1.1} is similar to that in \cite{CZ}, and we omit the details here. From the weighted $L^2$ theory for the linearized problem of \eqref{aA1.1} and some analysis on $g$ and $\tilde{f}$, the following lemma can be obtained:
\begin{lemma}\label{la3.4}
Let $(G,F)\in\mathcal{H}^{4,3}$ satisfy $K_0<\infty$. There exists a constant $\epsilon_0>0$ such that if $\epsilon\leq \epsilon_0$, \eqref{aA1.2} with $(\tilde{\sigma},\tilde{u})\in \dot{{\mathcal{F}}}^{4,5}_\epsilon$ admits a solution $(\sigma,u)\in \hat{\mathcal{H}}^{6,5}$ satisfying
\begin{align}\label{a107}
\|(\sigma,u)\|_{L^6}+\sum\limits_{\nu=1}^4\|(1+|x|)^\nu (\nabla^\nu\sigma,\nabla^{\nu+1} \sigma,\nabla^{\nu+2}\sigma)\|+\sum\limits_{\nu=1}^5\|(1+|x|)^{\nu-1}\nabla^\nu u\|\leq C(\epsilon^2+K_0),
\end{align}
where the positive constant $C$ depends only on $\mu,\lambda,\hbar$.
\end{lemma}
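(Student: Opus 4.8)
The plan is to establish the weighted estimate \eqref{a107} for the linearized iteration system \eqref{aA1.2} by first developing a weighted $L^2$ theory for the underlying linear elliptic operator on the left-hand side, in the spirit of the corresponding analyses for compressible Navier--Stokes \cite{[jj1],[jj2]} and Navier--Stokes--Korteweg \cite{CZ}. Rewriting \eqref{aA1.2} as a linear system $\Div u = g - \f{\tilde u}{1+\tilde\sigma}\cdot\ti\sigma =: \tilde g$ and $-\mu\la u - (\mu+\lambda)\ti\Div u + \ti\sigma - \f{\hbar^2}{4}\ti\la\sigma = \tilde f - \tilde u\cdot\ti\tilde u =: \hat f$, the first step is to eliminate $\sigma$: taking the divergence of the momentum equation and using $\Div u = \tilde g$ one obtains an expression for $\la\sigma - \f{\hbar^2}{4}\la^2\sigma$ in terms of $\tilde g$, $\hat f$, and lower-order terms, which is a fourth-order elliptic equation for $\sigma$ with a zeroth-order coercive structure once one couples it back with the equation $\ti\sigma$ appears in. The key point is that the combined operator $(\sigma,u)\mapsto(\Div u,\, \ti\sigma - \f{\hbar^2}{4}\ti\la\sigma - \mu\la u - (\mu+\lambda)\ti\Div u)$ is, after the standard Helmholtz-type splitting of $u$ into curl-free and divergence-free parts, elliptic and invertible on the whole space, so that the plain (unweighted) $L^2$ estimate $\|\ti\sigma\|_{H^{k+2}} + \|\ti u\|_{H^{k+1}} \lesssim \|\tilde g\|_{H^{k+1}} + \|\hat f\|_{H^{k-1}}$ follows by Fourier analysis or energy identities. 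The presence of the quantum term $\f{\hbar^2}{4}\ti\la\sigma$ actually \emph{helps} here: it upgrades the regularity of $\sigma$ by two derivatives compared with the Navier--Stokes case, which is exactly why the statement controls $(\ti^\nu\sigma, \ti^{\nu+1}\sigma, \ti^{\nu+2}\sigma)$ with a common weight $(1+|x|)^\nu$.

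The second and main step is to upgrade these $L^2$ bounds to the weighted $L^2$ bounds with weights $(1+|x|)^\nu$. The standard device is to multiply the equations by $(1+|x|)^{2\nu}$ times the appropriate derivative of the unknown and integrate by parts, inductively in $\nu$ from $1$ up to $4$; each time the weight is differentiated one loses a power of $(1+|x|)$ but gains a factor $\nu(1+|x|)^{\nu-1}|\ti(1+|x|)| \lesssim (1+|x|)^{\nu-1}$, so the commutator terms are absorbed by the lower-weight norms already controlled at the previous induction stage, provided a Hardy-type inequality is invoked to handle the borderline terms (this is where the condition that the weights on $u$ are shifted down by one, i.e. $(1+|x|)^{\nu-1}\ti^\nu u$, becomes essential, since $u$ itself is only in $L^6$ and not weighted-$L^2$, consistent with $\dot{\mathcal F}^{4,5}_\epsilon$). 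One must track carefully how the quantum term contributes: integrating $\f{\hbar^2}{4}\langle (1+|x|)^{2\nu}\ti^{\nu+2}\sigma, \ti^{\nu}\ti\la\sigma\rangle$ by parts produces the good term $\f{\hbar^2}{4}\|(1+|x|)^\nu\ti^{\nu+2}\sigma\|^2$ plus weight-commutator terms of the stated lower order, which is precisely the third slot in the left-hand side of \eqref{a107}.

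The third step is to feed in the structure of the source terms. Here one uses the definitions of $g$ and $\tilde f$ together with the hypothesis $(\tilde\sigma,\tilde u)\in\dot{\mathcal F}^{4,5}_\epsilon$ and the quantity $K_0$. Every term in $\tilde f$ and $\tilde g$ is either linear in $(F,G)$ with good weights built into $K_0$, or at least quadratic in $(\tilde\sigma,\tilde u)$ and their derivatives; for the quadratic terms one applies Lemma~\ref{fflma2.1} and Lemma~\ref{flma2.3} (product and commutator estimates) together with the $L^\infty$ and weighted-$L^2$ components of the $\dot{\mathcal F}^{4,5}_\epsilon$ norm to bound them by $\epsilon^2$ in the relevant weighted norms; the smallness $\epsilon\le\epsilon_0$ guarantees that $1+\tilde\sigma$ stays bounded below so that divisions by $1+\tilde\sigma$ and by $(1+\tilde\sigma)^2$ are harmless and can be expanded in a convergent Neumann-type series. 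The decomposition $\Div u = \Div V_1 + V_2$ hypothesis on $\tilde u$ (from $\dot{\mathcal F}^{4,5}_\epsilon$) is what lets $\tilde g$ inherit a compatible decomposition so that the low-frequency part of $\sigma$, which would otherwise not be controlled on $\R^3$, is handled via the $L^6$ and $L^\infty$ weighted bounds rather than $L^2$. Combining the linear weighted estimate of step two with these source bounds yields \eqref{a107} with the constant depending only on $\mu,\lambda,\hbar$.

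The step I expect to be the main obstacle is the weighted $L^2$ estimate for the fourth-order coupled operator --- specifically, closing the induction on the weight power $\nu$ in the presence of the quantum term. Unlike the second-order Navier--Stokes case, the bi-Laplacian-type term $\la^2\sigma$ generates weight commutators up to $(1+|x|)^{\nu-2}\ti^{\nu}(\cdot)$ and mixed terms that are not obviously of lower order than the quantities being estimated, so one needs a carefully chosen hierarchy of multipliers (and possibly an auxiliary "effective flux"–type variable, e.g. $\mathcal G = (2\mu+\lambda)\Div u - \sigma + \f{\hbar^2}{4}\la\sigma$, whose gradient satisfies a nicer equation) to make the induction actually close; getting the weights on $\sigma$ versus $u$ correctly offset so that all error terms land in the span of already-controlled norms is the delicate bookkeeping. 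The remaining steps (unweighted ellipticity, nonlinear source estimates) are routine given Lemmas~\ref{fflma2.1} and \ref{flma2.3} and the explicit form of the $\dot{\mathcal F}^{4,5}_\epsilon$ norm.
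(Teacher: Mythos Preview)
Your plan is correct and matches the paper's intended route: the paper does not actually prove Lemma~\ref{la3.4} but simply states that it follows from the weighted $L^2$ theory for the linearized problem together with estimates on $g$ and $\tilde f$, referring to \cite{CZ} (and implicitly \cite{[jj1],[jj2]}) for the details; the inductive weighted-energy scheme you outline, with the quantum term $\f{\hbar^2}{4}\ti\la\sigma$ providing the additional coercivity that controls $(1+|x|)^\nu\ti^{\nu+2}\sigma$, is precisely that mechanism.

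One small correction worth noting: the decomposition $\Div\tilde u=\Div\tilde V_1+\tilde V_2$ built into $\dot{\mathcal F}^{4,5}_\epsilon$ is \emph{not} used for the weighted $L^2$ estimate \eqref{a107}. The paper invokes it only in the subsequent $L^\infty$ lemma (see \eqref{a113}), where the convolution representation \eqref{a108} with the fundamental solutions $E_0,E_{ij}$ requires splitting the source $f_i$ into a divergence part and an $L^1$ part to get pointwise decay. For Lemma~\ref{la3.4} itself the $L^6$ bound on $(\sigma,u)$ comes from $\ti\sigma,\ti u\in L^2$ via Sobolev embedding, and the hypothesis $(\tilde\sigma,\tilde u)\in\dot{\mathcal F}^{4,5}_\epsilon$ enters only through the $\mathcal F^{4,5}$ norms of $(\tilde\sigma,\tilde u)$ when bounding the quadratic source terms by $C\epsilon^2$. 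So your third step is slightly over-engineered for this lemma, but this does not affect the validity of the argument.
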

Due to Lemma \ref{la3.4}, define the map $T:\dot{{\mathcal{F}}}^{4,5}_\epsilon\rightarrow\hat{\mathcal{H}}^{6,5}$ by $(\sigma,u)=T(\tilde{\sigma},\tilde{u})$.
By the Helmholtz decomposition and the Fourier transform, the solution of \eqref{aA1.2} can be formulated as follows:
\begin{align}\label{a108}
v=\omega+\nabla p,~~\sigma-{\f{\hbar^2}{4}\Delta\sigma}=\Psi+(2\mu+\lambda)\Delta p,
\end{align}
where
\begin{equation}{}
		\left \{ \begin{aligned}
&p(x)=E_0\ast R(x),\\
&\omega_j(x)=\sum_{i=1}^{3}E_{ij}\ast f_i(x),\\
&\Psi=E_0\ast\mathrm{div}f,
		\end{aligned} \right.
	\end{equation}
and
\begin{equation}{}
		\left \{ \begin{aligned}
&E_{ij}(x)=\f{1}{8\pi\mu}\big(\f{\delta_{ij}}{|x|}-\f{x_ix_j}{|x|^3}\big),\\
&E_0=-\f{1}{4\pi|x|},\\
&f_i(x)=-\tilde{u}\cdot\nabla \tilde{u}_i+\tilde{f}_i(x),\\
&R(x)=-\f{\tilde{u}}{1+\tilde{\sigma}}\ti\sigma+\f{G}{1+\tilde{\sigma}}.
		\end{aligned} \right.
	\end{equation}
Then, the $L^\infty$ estimates for the solution to \eqref{aA1.2} can be deduced as follows:
\begin{lemma}
Let $(G,F)\in\mathcal{H}^{4,3}$ satisfy $K<\infty$.
If $(\sigma,u)\in \hat{\mathcal{H}}^{6,5}$ which satisfies \eqref{a107} is a solution to \eqref{aA1.2} with $(\tilde{\sigma},\tilde{u})\in \dot{{\mathcal{F}}}^{4,5}_\epsilon$, then
\begin{align}\label{}
\sum\limits_{\nu=0}^1\|(1+|x|)^2\nabla^\nu \sigma\|_{L^\infty}+\sum\limits_{\nu=0}^1\|(1+|x|)^{\nu+1}\nabla^\nu u\|_{L^\infty}+\|(1+|x|)^2\nabla^2 u\|_{L^\infty}\leq C(\epsilon^2+K),
\end{align}
where the positive constant $C$ depends only on $\mu,\lambda,\hbar$.
\end{lemma}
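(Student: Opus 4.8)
The plan is to estimate each of the seven weighted $L^\infty$ norms by starting from the explicit representation formulas in \eqref{a108} and the convolution kernels $E_0$, $E_{ij}$, reducing everything to two types of bounds: pointwise estimates of convolutions $E_0 \ast h$ and $E_{ij}\ast h$ (and their derivatives) against weighted data $h$, and then a bootstrap of the weighted-$L^\infty$ norm of $\sigma$ that handles the regularized elliptic equation $\sigma - \tfrac{\hbar^2}{4}\Delta\sigma = \Psi + (2\mu+\lambda)\Delta p$. First I would record the basic decay lemma for the Newtonian-type potentials: if $\|(1+|x|)^{a}h\|_{L^\infty}<\infty$ and $\|(1+|x|)^{-1}h\|_{L^1}<\infty$ with $a>3$, then $\|(1+|x|)\, E_0\ast h\|_{L^\infty}\lesssim \|(1+|x|)^{a}h\|_{L^\infty}+\|(1+|x|)^{-1}h\|_{L^1}$, with analogous statements for $\nabla E_0\ast h$ (decay $(1+|x|)^2$) and $\nabla^2 E_0\ast h$, and the same for the Oseen tensor $E_{ij}$ and its derivatives since $|\nabla^k E_{ij}(x)|\lesssim |x|^{-1-k}$. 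These are exactly the kernel estimates underlying the $L^2$-weighted theory already invoked for Lemma \ref{la3.4}, so I would cite them in the form used in \cite{CZ,[jj1],[jj2]}.

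Next I would control the source terms $f_i = -\tilde u\cdot\nabla\tilde u_i + \tilde f_i$ and $R = -\frac{\tilde u}{1+\tilde\sigma}\nabla\sigma + \frac{G}{1+\tilde\sigma}$ in the weighted norms needed to feed the kernel lemma. Since $(\tilde\sigma,\tilde u)\in\dot{\mathcal F}^{4,5}_\epsilon$, the definitions of $\|\cdot\|_{I^j}$ and $\|\cdot\|_{J^k}$ give $\|(1+|x|)^2\tilde u\|_{L^\infty}, \|(1+|x|)^3\nabla\tilde u\|_{L^\infty}$-type control (up to the weights actually present in $J^5$), $\|(1+|x|)^2\tilde\sigma\|_{L^\infty}$, $\|(1+|x|)^2\nabla\tilde\sigma\|_{L^\infty}$, so products like $\tilde u\cdot\nabla\tilde u$ are $O(\epsilon^2)$ in the weight $(1+|x|)^3$ in $L^\infty$ and, using the $\dot{\mathcal F}$ refinement $\Div u = \Div V_1 + V_2$, in the $L^1$ sense with weight $(1+|x|)^{-1}$; the quantum terms $\frac{|\nabla\tilde\sigma|^2\nabla\tilde\sigma}{(1+\tilde\sigma)^2}$ etc.\ are cubic/quadratic in weighted derivatives of $\tilde\sigma$ and hence $O(\epsilon^2)$ with fast weighted decay, while the linear-in-data pieces $(1+\tilde\sigma)F$, $\tilde u G$ contribute the $K$-terms $\|(1+|x|)^3(F,G)\|_{L^\infty}$, $\|(1+|x|)^2 F_1\|_{L^\infty}$, $\|F_2\|_{L^1}$. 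I would also need $\|(1+|x|)^3\nabla f\|$-type and $\|(1+|x|)^3\nabla^2 G\|_{L^\infty}$ bounds (which is why $K$ includes $\|(1+|x|)^3(\nabla F,\nabla G)\|_{L^\infty}$ and $\|(1+|x|)^3\nabla^2 G\|_{L^\infty}$) to reach the $(1+|x|)^2\nabla^2 u$ norm and the $(1+|x|)^2\nabla\sigma$ norm. Here I would use Lemma \ref{fflma2.1} together with the already-established weighted $L^2$ bound \eqref{a107} to interpolate intermediate weighted quantities (e.g., turning $L^2$ control of $(1+|x|)^\nu\nabla^{\nu+2}\sigma$ into $L^\infty$ control of $(1+|x|)^2\nabla\sigma$ via Gagliardo–Nirenberg, since in $\R^3$ one gains $L^\infty$ from two extra derivatives in $L^2$).

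Then comes the recovery of $\sigma$ from the Helmholtz/Fourier decomposition. From \eqref{a108}, $p = E_0\ast R$ gives $\Delta p$ directly in terms of $R$ (formally $\Delta p = R$ minus the divergence-free correction, but more precisely $\nabla^2 p = \nabla^2 E_0\ast R$), and $\Psi = E_0\ast\Div f$, so the right-hand side $\Psi + (2\mu+\lambda)\Delta p$ of the $\sigma$-equation is bounded in $(1+|x|)^2$-weighted $L^\infty$ by the kernel lemma applied to $R$ and $\Div f$. The operator $(1 - \tfrac{\hbar^2}{4}\Delta)^{-1}$ is a Bessel-type potential with exponentially decaying kernel, so it is bounded on every weighted $L^\infty$ space with polynomial weight; hence $\|(1+|x|)^2\sigma\|_{L^\infty}\lesssim \|(1+|x|)^2(\Psi+\Delta p)\|_{L^\infty}\lesssim \epsilon^2 + K$, and differentiating once gives the $\nabla\sigma$ bound the same way. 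For $u = \omega + \nabla p$ the estimates on $\|(1+|x|)^{\nu+1}\nabla^\nu u\|_{L^\infty}$, $\nu=0,1$, and $\|(1+|x|)^2\nabla^2 u\|_{L^\infty}$ follow by applying the corresponding derivative-kernel lemma to $\omega_j = \sum_i E_{ij}\ast f_i$ and to $\nabla p = \nabla E_0\ast R$.

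The main obstacle I expect is the interplay between $L^\infty$-decay and $L^1$-control in the Newtonian potential estimates: $E_0$ decays only like $|x|^{-1}$, so to get $(1+|x|)^2$-decay of $\nabla E_0\ast R$ one must simultaneously exploit the fast pointwise decay of $R$ far from the origin and the $L^1$-smallness of the ``bad'' part near the origin — this is exactly where the subtle structural hypothesis $\Div u = \Div V_1 + V_2$ in $\dot{\mathcal F}^{j,k}_\epsilon$ (and the corresponding $F = \Div F_1 + F_2$ splitting built into $K$) is indispensable, because without the $V_2\in L^1$ / $F_2\in L^1$ decomposition the middle-range contribution of the convolution does not close. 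The care needed is to track, term by term in $f$ and $R$, which pieces are genuinely in the weighted $L^\infty$ class and which only survive in the weighted $L^1$ class, and to match them against the correct part of the kernel; everything else (Hölder, Gagliardo–Nirenberg via Lemma \ref{fflma2.1}, and absorbing $O(\epsilon^2)$ terms using $\epsilon\le\epsilon_0$ small) is routine. I would present the argument by first stating and citing the weighted potential lemma, then doing the $\sigma$-bootstrap, then the $u$-estimates, and finally collecting constants, noting the dependence only on $\mu,\lambda,\hbar$.
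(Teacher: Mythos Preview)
Your proposal is correct and follows essentially the same approach as the paper: decompose $f_i=\Div f_{1,i}+f_{2,i}$ using the $\tilde V_1,\tilde V_2$ structure built into $\dot{\mathcal F}^{4,5}_\epsilon$, apply the weighted potential/kernel lemmas for $E_0$ and $E_{ij}$ (cited from \cite{CZ,[jj2]}) to control $\omega,\Psi,\nabla^k p$ in $(1+|x|)$-weighted $L^\infty$, and then invoke the exponentially decaying Bessel kernel of $(1-\tfrac{\hbar^2}{4}\Delta)^{-1}$ to pass the $(1+|x|)^2$-weighted $L^\infty$ bound from $\Psi+(2\mu+\lambda)\Delta p$ to $\sigma$ and $\nabla\sigma$, with the region $|x|<1$ handled by Sobolev and \eqref{a107}. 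The only difference is cosmetic ordering: the paper estimates $\omega,\Psi$, then $p$ (citing \cite{[jj2]} for the $\nabla\sigma$-dependent part of $R$, using \eqref{a107}), then $u=\omega+\nabla p$, and finally $\sigma$, whereas you propose bootstrapping $\sigma$ first and doing $u$ afterward---both orders close for $\epsilon$ small.
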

\begin{proof}
For the estimate on $f$. Since $(\tilde{\sigma},\tilde{u})\in \dot{{\mathcal{F}}}^{4,5}_\epsilon$, there exist $\tilde{V}_1$ and $\tilde{V}_2$ such that $\Div \tilde{v}=\Div \tilde{V}_1+\tilde{V}_2$, and
\begin{eqnarray}\label{a113}
\|(1+|x|)^3\tilde{V}_1\|_{L^\infty}+\|(1+|x|)^{-1}\tilde{V}_2\|_{L^1}\leq \epsilon.
\end{eqnarray}
Then
\begin{eqnarray*}
{f}_i&=&-\tilde{\rho} \tilde{u}\cdot\ti \tilde{u}_i+\tilde{\rho} F_i-\tilde{u}_iG-[(P'(\tilde{\rho} )-1)\ti\tilde{\sigma}]_i
 -\f{\hbar^2}{4}\bigg(\f{|\ti \tilde{\sigma}|^2\ti \tilde{\sigma}}{\tilde{\rho} ^2}-\f{\ti \tilde{\sigma} \triangle \tilde{\sigma}}{\tilde{\rho} }-\f{\ti \tilde{\sigma}\cdot \ti^2\tilde{\sigma}}{\tilde{\rho} }\bigg)_i\\
&=&\Div(-\tilde{\rho}\tilde{u}\tilde{u}_i+\tilde{\rho}\tilde{u}_i\tilde{V}_1+\tilde{\rho}F_{1,i})
+\{-\tilde{\rho}\tilde{V}_1\cdot\ti\tilde{u}_i-\tilde{u}_i\tilde{V}_1\cdot\ti\tilde{\rho}+
\tilde{\rho}\tilde{u}_i\tilde{V}_2+\tilde{\rho}F_{2,i}+\tilde{u}_i\tilde{u}\ti\tilde{\rho}\\
&&-[(P'(\tilde{\rho} )-1)\ti\tilde{\sigma}]_i
 -\f{\hbar^2}{4}\bigg(\f{|\ti \tilde{\sigma}|^2\ti \tilde{\sigma}}{\tilde{\rho} ^2}-\f{\ti \tilde{\sigma} \triangle \tilde{\sigma}}{\tilde{\rho}}-\f{\ti \tilde{\sigma}\cdot \ti^2\tilde{\sigma}}{\tilde{\rho} }\bigg)_i-\ti\tilde{\rho}F_{1,i}-\tilde{u}_iG\}\\
&=&\Div f_{1,i}+f_{2,i}.
\end{eqnarray*}
By virtue of \eqref{a113}, the Sobolev inequality and mean value theorem, we have
\begin{eqnarray*}
\|(1+|x|)^3f_i\|_{L^\infty}+\|(1+|x|)^{2}f_{1,i}\|_{L^\infty}+\|f_{2,i}\|_{L^1}\leq C(\epsilon^2+K_1),
\end{eqnarray*}
and
\begin{eqnarray*}
\|(1+|x|)^3\ti f_i\|_{L^\infty}+\|(1+|x|)^{2}f_{i}\|_{L^\infty}\leq C(\epsilon^2+K_1),
\end{eqnarray*}
where
\begin{eqnarray*}
K_1&=&\|(1+|x|)^3(F,G,\ti F,\ti G)\|_{L^\infty}+\|(1+|x|)^2F_1\|_{L^\infty}+\|F_2\|_{L^1}.
\end{eqnarray*}
Hence, by (i) and (ii) of Lemma 2.5 in {\cite{CZ}}, we get
\begin{eqnarray}\label{a114}
|x||\omega_j|,|x|^2(|\ti \omega_j|+|\ti^2\omega_j|+|\Psi|+|\ti\Psi|)\leq C(\epsilon^2+K_1).
\end{eqnarray}
For the estimate on $p$, See {\cite{[jj2]}},
\begin{eqnarray}\label{a115}
|x||\ti p|,|x|^2(|\ti^2 p|+|\ti^3 p|)\leq C(\epsilon^2+K_0+K_2),
\end{eqnarray}
where
\begin{eqnarray*}
K_2&=&\|(1+|x|)^2G\|_{L^\infty}+\|(1+|x|)^3(\ti G,\ti^2 G)\|_{L^\infty}.
\end{eqnarray*}
Then, we have from \eqref{a108}, \eqref{a114} and \eqref{a115} that
\begin{eqnarray}\label{a116}
|x||u|,|x|^2(|\ti u|+|\ti^2 u|)\leq C(\epsilon^2+K_0+K_1+K_2).
\end{eqnarray}
Next, for the estimate on $\sigma$, by the Fourier transform, we have from \eqref{a108} and {\cite{LCE}}
\begin{eqnarray*}
\sigma(x)&=&\f{1}{(2\pi)^{3/2}}\mathcal{F}^{-1}\bigg(\f{1}{1+\hbar^2|\xi|^2/4}\bigg)\ast L(\Psi,p)\\
&=&\f{1}{\pi^{3/2}\hbar^3}\int_{\mathbb{R}^3}\bigg(\int_0^\infty e^{-t-\f{|y|^2}{\hbar^2 t}}t^{-\f32}dt\bigg)
(\Psi+(2\mu+\lambda)\Delta p)(x-y)dy.
\end{eqnarray*}
Hence, it follows from {\cite{CZ}} that
\begin{eqnarray*}
|x|^2(|\sigma|+|\ti \sigma|)\leq C(\epsilon^2+K).
\end{eqnarray*}
The case of $|x|<1$ can be obtained from the Sobolev inequality and \eqref{a107}. This completes the proof of this lemma.
\end{proof}
The following proposition shows that the solution $(\sigma,u)\in \dot{{\mathcal{F}}}^{4,5}_\epsilon$ and the solution map $T$ is contractive.
\begin{proposition}\label{pro2.3}
There exist a constant $c_0>0$ and a sufficiently small constant $\epsilon>0$ such that if $(G,F)\in\mathcal{H}^{4,3}$ satisfies
\begin{eqnarray*}
    K+\|(1+|x|)^{-1}G\|_{L^1}\leq c_0\epsilon,
\end{eqnarray*}
then \eqref{aA1.2} with $(\tilde{\sigma},\tilde{u})\in \dot{{\mathcal{F}}}^{4,5}_\epsilon$ admits a solution $(\sigma,u)=T(\tilde{\sigma},\tilde{u})\in \dot{{\mathcal{F}}}^{4,5}_\epsilon$. Moreover, for
$(\sigma^j,u^j)\in \dot{{\mathcal{F}}}^{4,5}_\epsilon$ with $(j=1,2)$, we have
\begin{eqnarray*}
&&\|(\sigma^1-\sigma^2,u^1-u^2)\|_{{{\mathcal{F}}}^{4,5}}+\|(1+|x|)^3({V}_1^1-{V}^2_1)\|_{L^\infty}+\|(1+|x|)^{-1}({V}^1_2-{V}_2^2)\|_{L^1}\\
&&\leq\f12\{\|(\tilde{\sigma}^1-\tilde{\sigma}^2,\tilde{u}^1-\tilde{u}^2)\|_{{{\mathcal{F}}}^{4,5}}+\|(1+|x|)^3(\tilde{{V}}_1^1-\tilde{{V}}^2_1)\|_{L^\infty}+\|(1+|x|)^{-1}(\tilde{{V}}^1_2-\tilde{{V}}_2^2)\|_{L^1}\},
	\end{eqnarray*}
where $(\tilde{V}_1^j,\tilde{V}_2^j)$ are defined by
\begin{eqnarray*}
&&\Div \tilde{u}^j=\Div \tilde{V}_1^j+\tilde{V}_2^j,\|(1+|x|)^3\tilde{V}_1^j\|_{L^\infty}+\|(1+|x|)^{-1}\tilde{V}_2^j\|_{L^1}\leq\epsilon,
	\end{eqnarray*}
and $({V}_1^j,{V}_2^j)$ are defined by
\begin{eqnarray*}
&&{V}_1^j=-\f{\tilde{u}^j}{\tilde{\rho}^j}\sigma^j,~~~~~{V}_2^j=\Div\bigg(\f{\tilde{u}^j}{\tilde{\rho}^j}\bigg)\sigma^j+\f{G}{\tilde{\rho}^j}.
	\end{eqnarray*}
\end{proposition}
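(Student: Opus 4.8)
The plan is to combine the two lemmas just established—the weighted $L^2$ bound \eqref{a107} and the weighted $L^\infty$ bound—with the smallness hypothesis $K+\|(1+|x|)^{-1}G\|_{L^1}\leq c_0\epsilon$ to first verify the invariance claim $T(\dot{\mathcal F}^{4,5}_\epsilon)\subset\dot{\mathcal F}^{4,5}_\epsilon$, and then to run a difference estimate for contraction. For the invariance, note that all the quantities $K_0,K_1,K_2$ appearing on the right-hand sides of \eqref{a107}–\eqref{a116} and of the $\sigma$-estimate are controlled by $K$, so the two lemmas give $\|(\sigma,u)\|_{\mathcal F^{4,5}}\leq C(\epsilon^2+K)\leq C(\epsilon^2+c_0\epsilon)$. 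Choosing $\epsilon$ small and then $c_0$ small makes $C(\epsilon^2+c_0\epsilon)\leq\epsilon$, which places $(\sigma,u)$ in $\mathcal F^{4,5}_\epsilon$. It remains to check the structural condition defining $\dot{\mathcal F}^{4,5}_\epsilon$, i.e.\ that $\Div u=\Div V_1+V_2$ with $\|(1+|x|)^3V_1\|_{L^\infty}+\|(1+|x|)^{-1}V_2\|_{L^1}\leq\epsilon$. Here I would read off from the first equation of \eqref{aA1.2} that $\Div u=g-\frac{\tilde u}{1+\tilde\sigma}\ti\sigma$, and use the decomposition $\ti\sigma\cdot\frac{\tilde u}{1+\tilde\sigma}=\Div\!\big(\frac{\tilde u}{1+\tilde\sigma}\sigma\big)-\Div\!\big(\frac{\tilde u}{1+\tilde\sigma}\big)\sigma$, so that $V_1=-\frac{\tilde u}{1+\tilde\sigma}\sigma$ and $V_2=\Div\!\big(\frac{\tilde u}{1+\tilde\sigma}\big)\sigma+\frac{G}{1+\tilde\sigma}$, exactly as in the statement of the proposition. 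The weighted bound on $V_1$ follows from $\|(1+|x|)\tilde u\|_{L^\infty}\lesssim\epsilon$ (from $\tilde u\in J^5_\epsilon$) and $\|(1+|x|)^2\sigma\|_{L^\infty}\leq C(\epsilon^2+K)$; the $L^1$ bound on $V_2$ follows from splitting into the $\Div(\tilde u/(1+\tilde\sigma))\,\sigma$ piece (bounded using $\|(1+|x|)^2\ti\tilde u\|_{L^\infty}$, $\|(1+|x|)^2\tilde\sigma\ti\tilde\sigma\|_{\cdots}$ and the $L^1$-integrable weight $(1+|x|)^{-4}$ coming from $(1+|x|)^2\sigma$ times the decay of the coefficient) and the $G/(1+\tilde\sigma)$ piece, which is controlled by $\|(1+|x|)^{-1}G\|_{L^1}\leq c_0\epsilon$. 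Again shrinking $\epsilon,c_0$ closes this.

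For the contraction estimate I would subtract the systems \eqref{aA1.2} corresponding to two inputs $(\tilde\sigma^1,\tilde u^1)$ and $(\tilde\sigma^2,\tilde u^2)$. Writing $\bar\sigma=\sigma^1-\sigma^2$, $\bar u=u^1-u^2$, the differences satisfy a system of the same linear type,
\[
\begin{cases}
\Div\bar u+\dfrac{\tilde u^1}{1+\tilde\sigma^1}\ti\bar\sigma=\bar g-\Big(\dfrac{\tilde u^1}{1+\tilde\sigma^1}-\dfrac{\tilde u^2}{1+\tilde\sigma^2}\Big)\ti\sigma^2,\\[2mm]
-(\mu\la\bar u+(\mu+\lambda)\ti\Div\bar u)+\ti\bar\sigma-\dfrac{\hbar^2}{4}\ti\la\bar\sigma=-(\tilde u^1\cdot\ti\tilde u^1-\tilde u^2\cdot\ti\tilde u^2)+(\tilde f^1-\tilde f^2),
\end{cases}
\]
so the very same linear-theory estimates used to prove Lemma \ref{la3.4} and the $L^\infty$ lemma apply to $(\bar\sigma,\bar u)$, now with right-hand data that are \emph{linear} in the input differences. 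The main point is that every nonlinear term entering $\bar g$, $\tilde f^1-\tilde f^2$, and the quadratic convection difference is a product in which at least one factor is a difference $\tilde\sigma^1-\tilde\sigma^2$, $\tilde u^1-\tilde u^2$, $\tilde V_1^1-\tilde V_1^2$, or $\tilde V_2^1-\tilde V_2^2$, and the remaining factors lie in $\dot{\mathcal F}^{4,5}_\epsilon$ and hence carry a factor $\lesssim\epsilon$ in the relevant weighted norm (for the $(1+\tilde\sigma)^{-1}$ denominators one uses the mean value theorem together with $\|\tilde\sigma^i\|_{L^\infty}\leq C\epsilon<1/2$). Collecting, one gets
\[
\|(\bar\sigma,\bar u)\|_{\mathcal F^{4,5}}+\|(1+|x|)^3(V_1^1-V_1^2)\|_{L^\infty}+\|(1+|x|)^{-1}(V_2^1-V_2^2)\|_{L^1}
\leq C(\epsilon+c_0)\,\mathcal D,
\]
where $\mathcal D$ denotes the analogous quantity formed from the input differences; taking $\epsilon$ and $c_0$ small enough that $C(\epsilon+c_0)\leq 1/2$ yields the claimed contraction factor $1/2$. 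I would reuse verbatim the bounds \eqref{a114}, \eqref{a115}, the Newtonian-potential estimates of Lemma 2.5 of \cite{CZ}, and the $\sigma$-representation via $\mathcal F^{-1}\!\big((1+\hbar^2|\xi|^2/4)^{-1}\big)$, applied to the difference data.

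The step I expect to be the genuine obstacle is bookkeeping the weighted $L^1$ norm of $V_2$ and of $V_2^1-V_2^2$: unlike the $L^\infty$ weights, the $L^1$ piece forces one to track how much spatial decay is actually available, and the term $\Div(\tilde u^1/(1+\tilde\sigma^1))\,\bar\sigma$ (and its companions) only closes because $(1+|x|)^2\bar\sigma\in L^\infty$ is multiplied against a coefficient decaying like $(1+|x|)^{-3}$, leaving an $L^1$-integrable weight $(1+|x|)^{-1}$—but one must verify that the decomposition $\Div\bar u=\Div(V_1^1-V_1^2)+(V_2^1-V_2^2)$ is exactly the one induced by the formulas $V_1^j=-\tilde u^j\sigma^j/\tilde\rho^j$, $V_2^j=\Div(\tilde u^j/\tilde\rho^j)\sigma^j+G/\tilde\rho^j$ in the statement, and then expand each difference carefully (e.g.\ $V_1^1-V_1^2=-\frac{\tilde u^1}{\tilde\rho^1}\bar\sigma-\big(\frac{\tilde u^1}{\tilde\rho^1}-\frac{\tilde u^2}{\tilde\rho^2}\big)\sigma^2$) so that no term is left with insufficient decay. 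Everything else—the interior region $|x|<1$, the $H^k$-parts of the $\mathcal F^{4,5}$ norm, the convection and quantum nonlinearities—is handled exactly as in the proofs of Lemma \ref{la3.4} and the preceding $L^\infty$ lemma, just with one factor replaced by a difference.
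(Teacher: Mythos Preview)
Your proposal is correct and follows the standard route, but note that the paper itself does not give a proof of this proposition: immediately before Lemma~\ref{la3.4} the author writes that ``the proof of the existence and uniqueness of stationary solutions to the problem \eqref{st1.1} is similar to that in \cite{CZ}, and we omit the details here,'' and after stating Proposition~\ref{pro2.3} the paper simply invokes it to conclude Theorem~\ref{thm1.1}. So there is nothing in the paper to compare against beyond the two preparatory lemmas you are already using.

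Your outline---combining \eqref{a107} and the $L^\infty$ lemma to get $\|(\sigma,u)\|_{\mathcal F^{4,5}}\leq C(\epsilon^2+K)$, reading off the $(V_1,V_2)$ decomposition from $\eqref{aA1.2}_1$, and then subtracting two copies of \eqref{aA1.2} to obtain a contraction with constant $C(\epsilon+c_0)$---is exactly the argument one finds in \cite{CZ} (and in \cite{[jj1],[jj2]} for the non-Korteweg case), adapted here to accommodate the extra third-order quantum term via the representation $\sigma=\mathcal F^{-1}\big((1+\hbar^2|\xi|^2/4)^{-1}\big)\ast(\Psi+(2\mu+\lambda)\Delta p)$. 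The only point worth flagging is cosmetic: in your contraction constant you should write $C\epsilon$ rather than $C(\epsilon+c_0)$, since the hypothesis already packages $c_0\epsilon$ inside $K$, but this does not affect the conclusion.
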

By Proposition \ref{pro2.3}, the contraction mapping principle implies the existence and uniqueness of the solution to the problem\eqref{st1.1}, and thus the proof of Theorem \ref{thm1.1} is complete.
\section{Stability of the stationary solution}
The corresponding stationary solution to the problem \eqref{aA1.1} is denoted by $(\rho^*=\sigma^*+\bar{\rho},u^*)$.
We set $\varrho(t)=\rho(t)-\rho^*$, $\omega(t)=u(t)-u^*$ and $\bar{\rho}=1$, the problem \eqref{1.1} can be rewritten as
\begin{equation}\label{A3.1}
		\left\{ \begin{aligned}
			&\varrho_t(t)+\mathrm{div} [(\varrho(t)+\rho^*)\omega(t)]=-\mathrm{div}(u^*\varrho(t)),\\
 			&\omega_t(t)-\f{1}{\rho^*}(\mu\Delta \omega(t)+(\mu+\lambda)\ti\Div \omega(t))+A(t)\ti\varrho(t)-\f{\hbar^2}{4}\ti\Delta\varrho(t)=f(t),\\
            &(\varrho,\omega)(t)|_{t=0}=(\rho_0-\rho^*,u_0-u^*).
		\end{aligned} \right.
	\end{equation}
	Here $A(t)$ and $f(t)$ are defined as
	\begin{eqnarray*}
A(t)&=&\f{P'(\varrho(t)+\rho^*)}{\varrho(t)+\rho^*},\\
f(t)&=&-(A(t)-A^*)\ti\rho^*-\omega(t)\cdot\ti(\omega(t)+u^*)-u^*\cdot\ti \omega(t)-\bigg(\f{\omega(t)+u^*}{\varrho(t)+\rho^*}-\f{u^*}{\rho^*}\bigg)G\\
        &&-\f{\varrho(t)}{\rho^*(\varrho(t)+\rho^*)}(\mu\Delta(\omega(t)+u^*)-(\mu+\lambda)\ti\Div(\omega(t)+u^*))\\
        &&+\f{\hbar^2}4\bigg(\bigg(\f{\rho^*-1}{\rho^*}-\f{\varrho(t)+\rho^*-1}{\varrho(t)+\rho^*}\bigg)\ti\Delta {\rho^*}-\f{\varrho(t)+\rho^*-1}{\varrho(t)+\rho^*}\ti\Delta {\varrho(t)}\bigg)\\
		&&+\f{\hbar^2}4\bigg(\bigg(\f{|\ti (\varrho(t)+\rho^*)|^2}{(\varrho(t)+\rho^*)^3}-\f{|\ti {\rho^*}|^2}{(\rho^*)^3}\bigg)\ti {\rho^*}+\f{|\ti (\varrho(t)+\rho^*)|^2}{(\varrho(t)+\rho^*)^3}\ti \varrho(t)\\
		&&~~~~~~~~~-\bigg(\f{\ti (\varrho(t)+\rho^*) }{(\varrho(t)+\rho^*)^2}-\f{\ti {\rho^*} }{(\rho^*)^2}\bigg)\Delta {\rho^*}-\f{\ti (\varrho(t)+\rho^*) }{(\varrho(t)+\rho^*)^2}\Delta \varrho(t)\\
		&&~~~~~~~~~-\bigg(\f{\ti(\varrho(t)+\rho^*)}{(\varrho(t)+\rho^*)^2}-\f{\ti {\rho^*} }{(\rho^*)^2}\bigg)\ti^2{\rho^*}-\f{\ti (\varrho(t)+\rho^*) }{(\varrho(t)+\rho^*)^2}\ti^2\varrho(t)\bigg),
	\end{eqnarray*}
with $A(t)=A(\varrho(t)+\rho^*)$ and $A^*=A(\rho^*)$.

In this section, the proof of Theorem \ref{thm1.2} consists of two steps. First, the local existence result is the following:
\begin{proposition}\label{Pro3.1}
There exists a constant $t_0>0$ such that the initial value problem \eqref{A3.1} with the initial data $(\varrho,\omega)(0)\in\mathcal{H}^{4,3}$ admits a unique solution $(\varrho,\omega)(t)\in \mathfrak{C}(0,t_0;\mathcal{H}^{4,3})$. Moreover, we have
\begin{eqnarray*}
\|(\varrho,\omega)(t)\|_{4,3}^2\leq C\|(\varrho,\omega)(0)\|_{4,3}^2,
\end{eqnarray*}
for any $t\in [0,t_0]$, where $C$ is a positive constant.
\end{proposition}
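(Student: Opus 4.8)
The plan is to prove Proposition~\ref{Pro3.1} by the now standard scheme for such systems: linearize \eqref{A3.1}, build an iteration of linear problems, derive uniform energy estimates valid on a short time interval whose length depends only on $\|(\varrho,\omega)(0)\|_{4,3}$, and pass to the limit. Fix $(\varrho_0,\omega_0):=(\rho_0-\rho^*,u_0-u^*)\in\mathcal H^{4,3}$. Given $(\bar\varrho,\bar\omega)$ in a ball of $\mathfrak C(0,t_0;\mathcal H^{4,3})$ around the initial data, I would solve the \emph{linear} system obtained from \eqref{A3.1} by evaluating all coefficients --- $A$, $1/\rho^*$, $\varrho+\rho^*$, and the whole right-hand side $f$ --- at $(\bar\varrho,\bar\omega)$, while leaving the principal operators acting on the unknowns $(\varrho,\omega)$: the continuity equation stays $\partial_t\varrho+\Div[(\bar\varrho+\rho^*)\omega]=-\Div(u^*\varrho)$ (equivalently $\partial_t\varrho+(\bar\omega+u^*)\cdot\nabla\varrho+(\bar\varrho+\rho^*)\Div\omega=\text{l.o.t.}$), and the momentum equation keeps $\partial_t\omega-\frac1{\rho^*}(\mu\Delta\omega+(\mu+\lambda)\nabla\Div\omega)+\bar A\nabla\varrho-\frac{\hbar^2}{4}\nabla\Delta\varrho=\bar f$. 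Solvability of this linear system in the class $\varrho\in C(0,t_0;H^4)$, $\omega\in C(0,t_0;H^3)$ follows by a Galerkin approximation in the momentum variable coupled to the linear transport equation for $\varrho$; I would state this as an auxiliary lemma, its proof reducing to the a priori bound of the next step applied to the Galerkin system.

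The heart of the matter is the energy estimate for the linearized system. I would apply $\nabla^\alpha$ for $0\le|\alpha|\le3$ to the momentum equation and test with $\nabla^\alpha\omega$, apply $\nabla^\alpha$ for $0\le|\alpha|\le4$ to the continuity equation and test with $\nabla^\alpha\varrho$, and sum. The viscous term supplies the dissipation $c\|\nabla\omega\|_{H^3}^2$. The two dangerous terms in the momentum estimate, $\langle\bar A\,\nabla^{\alpha+1}\varrho,\nabla^\alpha\omega\rangle$ and $-\frac{\hbar^2}{4}\langle\nabla\Delta\nabla^\alpha\varrho,\nabla^\alpha\omega\rangle$, are treated by integrating by parts to expose $\Div\nabla^\alpha\omega$ and then substituting $\rho^*\Div\omega=-\partial_t\varrho+(\text{lower order in }\varrho,\omega)$ from the continuity equation; this turns them into the exact time derivatives $\frac{d}{dt}\big(\frac{\bar A}{2\rho^*}\|\nabla^\alpha\varrho\|^2\big)$ and $\frac{\hbar^2}{8\rho^*}\frac{d}{dt}\|\nabla^{\alpha+1}\varrho\|^2$, up to commutators and terms carrying $\partial_t\bar A$, $\nabla\rho^*$ that are absorbed into the energy and the dissipation. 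This is exactly the mechanism forcing the asymmetric regularity $\varrho\in H^4$, $\omega\in H^3$: the quantum term contributes one extra $\varrho$-derivative to the energy, so the natural functional is
\[\mathcal E(t)\sim\|\omega(t)\|_{H^3}^2+\|\varrho(t)\|_{H^4}^2 .\]
Using Lemma~\ref{fflma2.1}, Lemma~\ref{flma2.3}, the weighted $L^\infty$ bounds on $(\sigma^*,u^*)$ from Theorem~\ref{thm1.1}, and the lower bound $\bar\varrho+\rho^*\ge c>0$ (valid on $[0,t_0]$ since $\|\bar\varrho\|_{H^4}$ and $\|\sigma^*\|_{L^\infty}$ are small), one arrives at $\frac{d}{dt}\mathcal E(t)+c\|\nabla\omega(t)\|_{H^3}^2\le C\,\Phi\big(\|(\bar\varrho,\bar\omega)\|_{\mathfrak C(0,t_0;\mathcal H^{4,3})}\big)\,(1+\mathcal E(t))$ for a continuous $\Phi$. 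Integrating, and choosing $t_0$ small depending only on $\mathcal E(0)$, keeps the solution in the fixed ball $\{\mathcal E\le 2\mathcal E(0)\}$, which also closes the Galerkin argument of the first step.

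With this uniform bound available, I would then show that the solution map $(\bar\varrho,\bar\omega)\mapsto(\varrho,\omega)$ is a contraction on this ball measured in the weaker norm of $\mathfrak C(0,t_0;\mathcal H^{2,1})$: differences of two iterates solve a linear system of the same structure whose right-hand side is bounded by the difference in the weak norm times the uniform $\mathcal H^{4,3}$ bound, so after possibly shrinking $t_0$ one gains a factor $\tfrac12$. The fixed point is the desired solution of \eqref{A3.1}; interpolating weak-$*$ convergence against the uniform $\mathcal H^{4,3}$ bound yields $(\varrho,\omega)\in\mathfrak C(0,t_0;\mathcal H^{4,3})$, with $\varrho\in C^1(0,t_0;H^2)$ and $\omega\in C^1(0,t_0;H^1)$ read directly off the equations (this uses that $\bar f$, hence $f$, lies in $H^1$, which follows from the same Sobolev/product estimates once $\varrho\in H^4$ and $(\sigma^*,u^*)$ is controlled). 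Uniqueness is the same contraction estimate with both iterates being solutions, and the claimed inequality $\|(\varrho,\omega)(t)\|_{4,3}^2\le C\|(\varrho,\omega)(0)\|_{4,3}^2$ for $t\in[0,t_0]$ is precisely $\mathcal E(t)\le 2\mathcal E(0)$.

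The main obstacle is the energy estimate, and within it the third-order quantum term $-\frac{\hbar^2}{4}\nabla\Delta\varrho$: being dispersive rather than dissipative, it only closes through the skew coupling with the continuity equation, which is why the linearization must keep $\varrho$ and $\omega$ coupled at principal order and why one must carefully account for the commutators $[\nabla^\alpha,\bar A]\nabla\varrho$, $[\nabla^\alpha,1/\rho^*]\Delta\omega$, $[\nabla^\alpha,\bar\varrho+\rho^*]\Div\omega$ and for the error terms generated by the $t$- and $x$-dependence of $\bar A$, $\rho^*$, $u^*$. A secondary technical point is checking that every quotient appearing in $f$ (the ratios $\nabla(\varrho+\rho^*)/(\varrho+\rho^*)^2$, $1/(\rho^*(\varrho+\rho^*))$, and the differences against their $\rho^*$-values) is, together with its first derivative, bounded in $L^2\cap L^\infty$ by $\mathcal E$ and the norms of $(\sigma^*,u^*)$; given the $H^4\hookrightarrow W^{2,\infty}$ bound on $\varrho$ and the weighted bounds on $\sigma^*$ this is routine but must be carried out to legitimize the differential inequality above.
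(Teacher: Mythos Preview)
Your approach is correct and is precisely the route the paper indicates: the paper does not give a proof of Proposition~\ref{Pro3.1} at all, stating only that ``the proof of Proposition~\ref{Pro3.1} is similar to that in \cite{HHDL} by using the iteration argument'' and omitting the details. Your outline---linearize, iterate, close the $\mathcal H^{4,3}$ energy via the skew coupling that turns the dispersive quantum term into a time derivative of $\|\nabla^{\alpha+1}\varrho\|^2$, contract in a weaker norm---is exactly the Hattori--Li scheme adapted to the present third-order Korteweg/quantum structure, so there is nothing to compare.
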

Next, the other result is the a priori estimate.
\begin{proposition}\label{Pro3.2}
Let $(\varrho,\omega)(t)\in \mathfrak{C}(0,t_1;\mathcal{H}^{4,3})$ be a solution to the initial value problem \eqref{A3.1} for some positive constant $t_1$. Then there exists a constant $\delta_0>0$ such that if $0<\delta\leq \delta_0$ and $\sup\limits_{0\leq t\leq t_1}\|(\varrho,\omega)(t)\|_{4,3}+\|(\sigma^*,u^*)\|_{\mathcal{F}^{4,5}}\leq\delta$, we have
\begin{eqnarray*}
\|( \varrho, \omega)(t)\|_{4,3}^2+\int\limits_0^t\|(\ti \varrho,\ti \omega)(s)\|_{4,3}^2ds\leq C\|( \varrho, \omega)(0)\|_{4,3}^2,
	\end{eqnarray*}
for any $t\in [0,t_0]$, where the constant $C>0$ depends only on $\mu,\lambda,\hbar$.
\end{proposition}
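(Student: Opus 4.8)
The plan is to prove the a priori bound by a weighted energy method for the perturbation system \eqref{A3.1}, extracting dissipation from the viscosity and from the quantum term $-\tfrac{\hbar^2}{4}\nabla\Delta\varrho$, and exploiting the smallness of $(\sigma^*,u^*)$ (and of $G,F$, encoded in $K$) in the weighted norms furnished by Theorem \ref{thm1.1}. Throughout I write $N:=\sup_{0\le t\le t_1}\|(\varrho,\omega)\|_{4,3}+\|(\sigma^*,u^*)\|_{\mathcal{F}^{4,5}}\le\delta$, and I aim to build a functional $\mathcal{E}(t)\sim\|(\varrho,\omega)(t)\|_{4,3}^2$ with dissipation $\mathcal{D}(t):=\|(\nabla\varrho,\nabla\omega)(t)\|_{4,3}^2$. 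As a preliminary I would reorganise $f(t)$ into two families: (i) terms at least quadratic in $(\varrho,\nabla\varrho,\nabla^2\varrho,\nabla^3\varrho,\omega,\nabla\omega)$ with coefficients that are smooth bounded functions of $\varrho+\rho^*$, and (ii) terms linear in the unknowns but carrying a coefficient built from $\sigma^*,u^*,\nabla\sigma^*,G,F,\ldots$, hence of size $O(\delta)$ and, by Theorem \ref{thm1.1}, carrying a decay weight $(1+|x|)^{-1}$ or $(1+|x|)^{-2}$. The two summands of $f$ proportional to $\nabla\Delta\varrho$ I would move to the left-hand side; their combined coefficient is $-\tfrac{\hbar^2}{4}\cdot\tfrac{\varrho+\sigma^*}{\varrho+\rho^*}=O(\|\varrho\|_{L^\infty}+\|\sigma^*\|_{L^\infty})=O(\delta)$, so the effective quantum coefficient on the left stays $-\tfrac{\hbar^2}{4}(1+O(\delta))$, of fixed sign.

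For the basic estimate I would multiply the first equation of \eqref{A3.1} by $A(t)\varrho$ and the second by $\rho^*\omega$, add, and integrate over $\mathbb{R}^3$. Integration by parts turns the viscous term into a dissipation $\gtrsim\|\nabla\omega\|^2$ (up to commutator pieces involving $\nabla(1/\rho^*)$); the pressure and convection contributions of the two equations cancel to leading order, leaving $\tfrac{d}{dt}$ of a positive quadratic in $\varrho$ plus small remainders; and using the continuity equation to replace $\Div(\rho^*\omega)$, the quantum term yields $+\tfrac{\hbar^2}{8}\tfrac{d}{dt}\|\nabla\varrho\|^2$. The convective terms and family (ii) of $f$ are absorbed into $\delta\mathcal{D}$ via the Hardy inequality $\|v/|x|\|_{L^2}\le 2\|\nabla v\|_{L^2}$ (splitting $(1+|x|)^{-2}=(1+|x|)^{-1}(1+|x|)^{-1}$), while family (i) is $O(\sqrt{\mathcal{E}})\,\mathcal{D}$ by Sobolev embedding. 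Applying $\nabla^k$ to the first equation for $1\le k\le 4$ and testing against $A\nabla^k\varrho$, and $\nabla^k$ to the second for $1\le k\le 3$ and testing against $\rho^*\nabla^k\omega$, I would repeat this, now using the commutator and product estimates of Lemma \ref{flma2.3} on the variable coefficients $1/\rho^*$, $A$ and those in $f$, and the Gagliardo--Nirenberg inequality Lemma \ref{fflma2.1} to distribute derivatives; the quantum term again contributes $+\tfrac{\hbar^2}{8}\tfrac{d}{dt}\|\nabla^{k+1}\varrho\|^2$. Summing over $k$ yields $\tfrac{d}{dt}E_*+c\|\nabla\omega\|_3^2\le C(\delta+\sqrt{\mathcal{E}})\,\mathcal{D}$ for some $E_*\sim\|(\varrho,\omega)\|_{4,3}^2$; at this stage only $\|\nabla\omega\|_3^2$ appears on the dissipative side.

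To recover $\|\nabla\varrho\|_4^2$ I would run the standard cross estimate: take the $L^2$ inner product of $\nabla^k$(second equation of \eqref{A3.1}) with $\nabla^{k+1}\varrho$ for $0\le k\le 3$. The pressure term produces $A\|\nabla^{k+1}\varrho\|^2$ and, after one integration by parts, the quantum term produces $\tfrac{\hbar^2}{4}\|\nabla^{k+2}\varrho\|^2$, i.e. the whole of $\|\nabla\varrho\|_4^2$; the price is $\tfrac{d}{dt}\langle\nabla^k\omega,\nabla^{k+1}\varrho\rangle$ and, via the continuity equation, a term $\lesssim\|\nabla^k\Div\omega\|^2$ absorbed by the viscous dissipation once this cross term is added with a sufficiently small weight $\eta>0$. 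I would then set $\mathcal{E}:=E_*+\eta\sum_{k=0}^3\langle\nabla^k\omega,\nabla^{k+1}\varrho\rangle$, which for small $\eta$ satisfies $\mathcal{E}\sim\|(\varrho,\omega)\|_{4,3}^2$ and obeys $\tfrac{d}{dt}\mathcal{E}+c\,\mathcal{D}\le C(\delta+\sqrt{\mathcal{E}})\,\mathcal{D}$. Choosing $\delta_0$ small (so that $N$, hence $\sqrt{\mathcal{E}}$, is small on $[0,t_1]$) absorbs the right-hand side into the left, yielding $\tfrac{d}{dt}\mathcal{E}+c'\mathcal{D}\le 0$; integrating in time and using $\mathcal{E}\sim\|(\varrho,\omega)\|_{4,3}^2$ gives the stated inequality.

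The main obstacle is the top-order bookkeeping: with only $\varrho\in H^4$ and $\omega\in H^3$ at hand, every term of $\nabla^k f$ at the largest admissible $k$ must be dominated by $(\delta+\sqrt{\mathcal{E}})\,\mathcal{D}$. This calls for careful choices of which factor sits in $L^\infty$ (through Gagliardo--Nirenberg/Sobolev) versus $L^2$, for the isolation of the genuinely highest-order quantum piece $\propto\tfrac{\varrho+\sigma^*}{\varrho+\rho^*}\nabla\Delta\varrho$ so that it can be transferred to the left, and --- for the family-(ii) terms whose stationary coefficient lies only in a weighted $L^2$ space, e.g. the one $\propto\varrho\,\nabla\Delta\sigma^*$ --- for an additional integration by parts moving a derivative onto a bounded factor before Hardy's inequality applies. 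Everything else is the routine combination of weighted $L^2$ and commutator estimates described above, in the spirit of \cite{CZ}.
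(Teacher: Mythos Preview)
Your proposal is correct and follows essentially the same strategy as the paper: energy estimates on the density and momentum equations at each derivative level (with the quantum term producing $\tfrac{d}{dt}\|\nabla^{k+1}\varrho\|^2$ after substituting the continuity equation), a small multiple of the cross term $\langle\nabla^k\omega,\nabla^{k+1}\varrho\rangle$ to recover the density dissipation, Hardy's inequality for the stationary-coefficient pieces, and a cascading combination over $k$. The only minor discrepancy is the choice of multiplier on the momentum equation---the paper uses $\tilde A(t)=(\varrho+\rho^*)^2/P'(\varrho+\rho^*)$ rather than $\rho^*$, so that $A\cdot\tilde A=\varrho+\rho^*$ and the pressure/convection cancellation is exact---and the control of $\|\nabla^4\varrho\|$ enters only through the quantum term at the $k=3$ momentum level (not by testing the density equation at $k=4$); otherwise your outline matches Lemmas~\ref{As3.2}--\ref{As3.3} and the concluding argument closely.
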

The proof of Proposition \ref{Pro3.1} is similar to that in \cite{HHDL} by using the iteration argument. We omit it here. The aim of this {section} is to derive the a priori energy estimates in Proposition \ref{Pro3.2} for the system \eqref{A3.1}.
We make a priori assumption that $\|(\varrho,\omega)(t)\|_{4,3}$ is small enough, with the fact that $(\sigma^*,u^*)\in {\mathcal{F}}^{4,5}_{\epsilon}$, we assume that there exists a small constant $\delta>0$ such that
\begin{eqnarray*}
\|(\varrho,\omega)(t)\|_{4,3}+\|(\sigma^*,u^*)\|_{\mathcal{F}^{4,5}}\leq\delta.
	\end{eqnarray*}
Hence, by using the Sobolev inequality, we have
\begin{eqnarray*}
\f{\bar{\rho}}{2}\leq \varrho+\rho^*\leq \f{3\bar{\rho}}{2},~~\f{3\bar{\rho}}{4}\leq \rho^*\leq \f{5\bar{\rho}}{4}.
	\end{eqnarray*}
The following lemma is concerned with some estimates on $f(t)$ and its derivatives.
\begin{lemma}
For a multi-index $\alpha$ with $0\leq |\alpha|\leq 3$, if we write $\partial_x^\alpha f(t)$ as
\begin{eqnarray*}
&&\ti^k f(t)=-\f{\varrho(t)}{\rho^*(\varrho(t)+\rho^*)}\ti^k(\mu\Delta\omega(t)+(\mu+\lambda)\ti\Div \omega(t))+F_k(t)+\f{\hbar^2}4\ti^k{J(t)},
\end{eqnarray*}
then $F_\alpha(t)$ satisfies
\begin{equation}{F_k(t)\leq C}
		\left \{ \begin{aligned}
&|\ti u^*||\omega(t)|+(|u^*|+|\omega(t)|)|\ti \omega(t)|\\
&~~+(|\ti \rho^*|+|\ti^2u^*|)|\varrho(t)|+(|\varrho(t)|+|\omega(t)|)|G|,~~if~ k=0,\\
&|\ti^{k+1}u^*||\omega(t)|+\sum\limits_{\nu=1}^{k+1}|\ti^\nu\omega(t)|+\sum\limits_{\nu=1}^{k+1}(|\ti^\nu\rho^*|+|\ti^{\nu+1}u^*)||\varrho(t)|\\
 &~~+\sum\limits_{\nu=1}^{k}|\ti^\nu\varrho(t)|+(|\varrho(t)|+|\omega(t)|)\sum\limits_{\nu=0}^{k}|\ti^\nu G|+R_F^{k}(t),~~if~ k=1,2,3,
		\end{aligned} \right.
	\end{equation}
where
\begin{align*}
&R_F^{k}(t)=0,~~k=1,2,\\
&\|R_F^{3}(t)\|_{L^{\f32}}\leq C\delta\|(\ti^2\varrho,\ti^3\omega)(t)\|_{1,0}.
\end{align*}
Moreover, by virtue of the mean value theorem, $J(t)$ satisfies
\begin{align}\label{J}
&J(t)\sim \varrho(t)\nabla\Delta\rho^*+(\varrho(t)+\rho^*-1)\nabla\Delta\varrho(t)+|\nabla\rho^*|^2\varrho(t)+|\nabla\varrho(t)|^2\nabla\rho^*\notag\\
&+|\nabla\rho^*|^2\nabla\varrho(t)+|\nabla\varrho(t)|^2\nabla\varrho(t)+\varrho(t)\nabla\rho^*\Delta\rho^*+\nabla\varrho(t)\Delta\rho^*+\nabla\varrho(t)\Delta\varrho(t)\notag\\
&+\nabla\rho^*\Delta\varrho(t)+\varrho(t)\nabla\rho^*\ti^2\rho^*+\nabla\varrho(t)\ti^2\rho^*+\nabla\varrho(t)\ti^2\varrho(t)+\nabla\rho^*\ti^2\varrho(t).
\end{align}
\end{lemma}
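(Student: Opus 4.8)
The plan is to verify the claimed decomposition $\ti^k f(t) = -\frac{\varrho(t)}{\rho^*(\varrho(t)+\rho^*)}\ti^k(\mu\Delta\omega(t)+(\mu+\lambda)\ti\Div\omega(t)) + F_k(t) + \frac{\hbar^2}{4}\ti^k J(t)$ by writing out $f(t)$ term by term from its definition in \eqref{A3.1} and applying $\ti^k$. The viscous term in $f(t)$ is $-\frac{\varrho}{\rho^*(\varrho+\rho^*)}(\mu\Delta(\omega+u^*)+(\mu+\lambda)\ti\Div(\omega+u^*))$; splitting $\omega + u^* = \omega + u^*$ and moving the $u^*$-part into the remainder, the $\omega$-part is handled by Leibniz's rule, where the ``leading'' piece (all derivatives on $\omega$) is extracted as written and the lower-order pieces (at least one derivative on the coefficient $\frac{\varrho}{\rho^*(\varrho+\rho^*)}$) go into $F_k$ or $R_F^k$. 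The quantum correction terms collect into $J(t)$, and the claimed equivalence \eqref{J} follows from the mean value theorem applied to each difference quotient such as $\frac{\rho^*-1}{\rho^*}-\frac{\varrho+\rho^*-1}{\varrho+\rho^*}$, $\frac{|\ti(\varrho+\rho^*)|^2}{(\varrho+\rho^*)^3}-\frac{|\ti\rho^*|^2}{(\rho^*)^3}$, etc., each of which produces a factor proportional to $\varrho$ or $\ti\varrho$ times smooth bounded functions of $(\varrho,\rho^*)$ (bounded because $\frac{\bar\rho}{2}\leq\varrho+\rho^*\leq\frac{3\bar\rho}{2}$), multiplied by the appropriate derivatives of $\rho^*$; and the remaining terms (those without a difference quotient) are directly of the stated product form.

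Next I would establish the pointwise bound on $F_k(t)$. For $k=0$, each surviving term after the extraction is a product of a factor among $\{|\varrho|,|\omega|\}$ and a derivative of the stationary solution or of $G$, or a product of $|\omega|$ and $|\ti\omega|$, plus the $u^*$-transport terms $u^*\cdot\ti\omega$ and $\omega\cdot\ti u^*$ and the pressure difference $(A-A^*)\ti\rho^*\sim\varrho\,\ti\rho^*$ (again by the mean value theorem, using smoothness of $P$ near $\bar\rho$). Matching these against the listed terms $|\ti u^*||\omega|+(|u^*|+|\omega|)|\ti\omega|+(|\ti\rho^*|+|\ti^2u^*|)|\varrho|+(|\varrho|+|\omega|)|G|$ is a bookkeeping exercise. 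For $k=1,2$, one applies $\ti^k$ and Leibniz to every term; since $k\leq 2$ there are enough derivatives available on the $H^4$/$H^3$ quantities that no borrowing is needed, so $R_F^k=0$. The schematic bound $|\ti^{k+1}u^*||\omega|+\sum_{\nu=1}^{k+1}|\ti^\nu\omega|+\sum_{\nu=1}^{k+1}(|\ti^\nu\rho^*|+|\ti^{\nu+1}u^*|)|\varrho|+\sum_{\nu=1}^{k}|\ti^\nu\varrho|+(|\varrho|+|\omega|)\sum_{\nu=0}^{k}|\ti^\nu G|$ is then obtained by collecting terms according to where the derivatives land, using that at each order the small factor $|\varrho|$ or $|\omega|$ (or a pure derivative thereof) can always be isolated.

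The only genuinely delicate case is $k=3$, where the cubic and higher nonlinearities in $f(t)$ — for instance $|\ti\varrho|^2\ti\varrho$ inside $J$ and the products of three derivative factors in the quantum terms — no longer leave enough regularity to be absorbed into the schematic sum, so a remainder $R_F^3(t)$ must be carried. Here the plan is to use Lemma \ref{fflma2.1} (Gagliardo--Nirenberg) and Lemma \ref{flma2.3} (the commutator/product estimate) in $L^{3/2}$: for a typical offending term like $\ti^3(|\ti\varrho|^2\ti\varrho)$ one distributes derivatives, places the factor carrying the most derivatives in $L^2$ and the others in $L^6$ (so that $\frac{1}{2}+\frac{1}{6}+\frac{1}{6}=\frac{5}{6}$, then one more $L^6$ factor if needed, matching $\frac{1}{p}=\frac{2}{3}$), and uses Sobolev embedding $\dot H^1\hookrightarrow L^6$ together with the a priori smallness $\|(\varrho,\omega)\|_{4,3}\leq\delta$ to extract the prefactor $C\delta$; this yields $\|R_F^3(t)\|_{L^{3/2}}\leq C\delta\|(\ti^2\varrho,\ti^3\omega)(t)\|_{1,0}$. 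I expect this $k=3$ estimate, and in particular tracking exactly which terms are forced into $R_F^3$ rather than the schematic part, to be the main obstacle; everything else is careful but routine application of the product rule together with the mean value theorem and the uniform bounds $\frac{\bar\rho}{2}\leq\varrho+\rho^*\leq\frac{3\bar\rho}{2}$, $\frac{3\bar\rho}{4}\leq\rho^*\leq\frac{5\bar\rho}{4}$.
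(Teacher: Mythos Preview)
The paper states this lemma without proof; it is intended as a bookkeeping/structural lemma whose content is a careful application of the Leibniz rule and the mean value theorem, exactly along the lines you outline. Your overall plan --- extract the top-order viscous piece, apply the mean value theorem to every difference quotient (for $(A-A^*)$, for $\tfrac{\omega+u^*}{\varrho+\rho^*}-\tfrac{u^*}{\rho^*}$, and for each quantum difference), and collect terms according to where derivatives land --- is the right one and matches what the paper implicitly relies on.

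There is, however, a misreading in your treatment of $R_F^3$. In the decomposition
\[
\nabla^k f(t)=-\frac{\varrho}{\rho^*(\varrho+\rho^*)}\nabla^k(\mu\Delta\omega+(\mu+\lambda)\nabla\Div\omega)+F_k(t)+\frac{\hbar^2}{4}\nabla^k J(t),
\]
all of the quantum correction terms are absorbed into $J(t)$ \emph{before} differentiation, and the lemma makes no pointwise claim about $\nabla^k J$; that object is estimated separately as $I_{73}$ in the proof of the next lemma. Consequently the cubic term $|\nabla\varrho|^2\nabla\varrho$ you cite is \emph{not} a source of $R_F^3$: it sits inside $\nabla^3 J$ and never enters $F_3$. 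The genuine $R_F^3$ contributions come from the non-quantum part of $f$, specifically from the ``middle'' Leibniz terms that fail to fit the pointwise schematic because neither factor is in $L^\infty$. Typical examples are $\nabla^2\omega\cdot\nabla^2\omega$ arising in $\nabla^3(\omega\cdot\nabla\omega)$, or $\nabla^3\!\big(\tfrac{\varrho}{\rho^*(\varrho+\rho^*)}\big)\cdot\nabla^2\omega$ from the viscous Leibniz expansion. For these one has, e.g.,
\[
\|\nabla^2\omega\cdot\nabla^2\omega\|_{L^{3/2}}\le \|\nabla^2\omega\|_{L^6}\|\nabla^2\omega\|_{L^2}\le C\delta\|\nabla^3\omega\|,
\]
which is precisely the bound $\|R_F^3\|_{L^{3/2}}\le C\delta\|(\nabla^2\varrho,\nabla^3\omega)\|_{1,0}$. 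Once you redirect your $L^{3/2}$ argument to these terms (and drop the reference to $J$), your proof goes through.
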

In the following lemma, we will give some estimates on $\ti \omega$ and its derivatives up to $\ti^4 \omega$.
\begin{lemma}\label{As3.2}
Let $(\varrho,\omega)\in \mathfrak{C}(0,t_1;\mathcal{H}^{4,3})$ be a solution to the initial value problem \eqref{A3.1}. Then there exist constants $\delta_0,\varepsilon_0>0$ and $d_1>0$ such that if $0<\delta\leq \delta_0$, $0<\varepsilon\leq \varepsilon_0$, and $\|(\varrho,\omega)(t)\|_{4,3}+\|(\sigma^*,u^*)\|_{\mathcal{F}^{4,5}}\leq\delta$, we have
 \begin{eqnarray}\label{As3.10}
\f{d}{dt}(\|\varrho\|^2+\langle\hat{A}(t)\ti \varrho,\ti \varrho\rangle+\langle\tilde{A}(t)\omega,\omega\rangle)+d_1\|\ti\omega\|^2\leq C\delta\|\ti \varrho\|^2_1.
	\end{eqnarray}
Moreover, for $1\leq k \leq 3$, it holds that
\begin{eqnarray}\label{As3.11}
&&\f{d}{dt}(\|\ti^k\varrho\|^2+\langle\hat{A}(t)\ti^{k+1} \varrho,\ti^{k+1} \varrho\rangle+\langle\tilde{A}(t)\ti^{k}\omega,\ti^{k}\omega\rangle)+d_1\|\ti^{k+1}\omega\|^2\notag\\
&&~~\leq C(\delta+\varepsilon)\|\ti (\varrho,\omega)\|^2_{k+1,k-1}+C\varepsilon^{-1}\|\ti^k \omega\|^2,
	\end{eqnarray}
where
\begin{eqnarray*}
\hat{A}(t)=\f{\varrho+\rho^*}{P'(\varrho+\rho^*)},~~\tilde{A}(t)=\f{(\varrho+\rho^*)^2}{P'(\varrho+\rho^*)},
	\end{eqnarray*}
 and the constant $C>0$ depends only on $\mu,\lambda,\hbar$.
\end{lemma}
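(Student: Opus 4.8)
\textbf{Proof proposal for Lemma \ref{As3.2}.}
The plan is to run the standard energy method on the perturbation system \eqref{A3.1}, taking $\ti^k$ of both equations for $0\le k\le 3$, pairing the density equation with a suitable multiple of $\ti^k\varrho$ and the momentum equation with $\tilde A(t)\ti^k\omega$, and then adding. The point of multiplying the momentum equation by $\tilde A(t)=(\varrho+\rho^*)^2/P'(\varrho+\rho^*)$ (and correspondingly weighting $\ti^{k+1}\varrho$ by $\hat A(t)=(\varrho+\rho^*)/P'(\varrho+\rho^*)$) is to symmetrize the pressure term $A(t)\ti\varrho$ and the quantum dispersive term $-\f{\hbar^2}{4}\ti\Delta\varrho$ against the transport term in the continuity equation, so that the troublesome first-order terms cancel up to commutators and up to time-derivatives of the weights. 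Concretely, I would first establish the $k=0$ estimate: multiply $\eqref{A3.1}_1$ by $\varrho$, multiply $\eqref{A3.1}_2$ by $\tilde A(t)\omega$, integrate by parts to move $\ti\varrho$ and $\ti\Delta\varrho$ onto the test function, and observe that the cross terms combine into $\f{d}{dt}\langle\hat A(t)\ti\varrho,\ti\varrho\rangle$ plus error terms of the form $\langle\p_t\hat A\,\ti\varrho,\ti\varrho\rangle$, $\langle\p_t\tilde A\,\omega,\omega\rangle$, which by the continuity equation and the smallness $\|(\varrho,\omega)\|_{4,3}+\|(\sigma^*,u^*)\|_{\mathcal F^{4,5}}\le\delta$ are bounded by $C\delta\|\ti\varrho\|_1^2$ (after using $\p_t\varrho\sim\ti\omega+(\text{lower order})$ and Sobolev embedding). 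The viscosity term $-\f1{\rho^*}(\mu\Delta\omega+(\mu+\lambda)\ti\Div\omega)$ paired against $\tilde A\omega$ produces, modulo commutators with the weight $\tilde A/\rho^*$, the coercive dissipation $d_1\|\ti\omega\|^2$ (using $3\lambda+2\mu\ge 0$, or really $\mu>0$ with Korn-type control since $\tilde A/\rho^*\sim 1$); the commutators are absorbed since $\ti(\tilde A/\rho^*)$ is $O(\delta)$. The right-hand side $f(t)$ is handled by the $k=0$ bound on $F_0(t)$ from the previous lemma together with the structure of $J(t)$ in \eqref{J}: every term of $f(t)$ is at least quadratic in small quantities or carries a factor $\ti\rho^*,\ti^2 u^*$ which are $O(\delta)$ in the weighted norms, so $\langle \tilde A f,\omega\rangle\le C\delta\|\ti\varrho\|_1^2+C\delta\|\ti\omega\|^2$, the latter absorbed into $d_1\|\ti\omega\|^2$. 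This gives \eqref{As3.10}.

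For $1\le k\le 3$ I would apply $\ti^k$ to both equations and repeat, now pairing $\ti^k\eqref{A3.1}_1$ with $\ti^k\varrho$ and $\ti^k\eqref{A3.1}_2$ with $\tilde A(t)\ti^k\omega$. The new features are: (i) commutator terms $[\ti^k,\cdot]$ hitting the variable coefficients $\rho^*,u^*,A(t),\tilde A(t),1/\rho^*$, which are estimated by Lemma \ref{flma2.3} and the weighted bounds on $(\sigma^*,u^*)\in\mathcal F^{4,5}$; (ii) the transport term $-\Div(u^*\varrho)$ and the convective pieces inside $f$, whose top-order contribution $u^*\cdot\ti^{k+1}\varrho$ is again symmetrized against the pressure/quantum terms after multiplying by the $\hat A$-weight, leaving $O(\delta)\|\ti^{k+1}\varrho\|\|\ti^k\varrho\|$-type errors; (iii) the term $-\f{\varrho}{\rho^*(\varrho+\rho^*)}\ti^k(\mu\Delta\omega+(\mu+\lambda)\ti\Div\omega)$ from $f$, which contains $\ti^{k+2}\omega$ — but it carries the small prefactor $\varrho/(\rho^*(\varrho+\rho^*))=O(\delta)$, so after integrating by parts once against $\tilde A\ti^k\omega$ it contributes $O(\delta)\|\ti^{k+1}\omega\|^2$, absorbed into the dissipation, plus lower-order terms; (iv) the dispersive term $\ti^k J(t)$ from \eqref{J}, whose dangerous piece is $(\varrho+\rho^*-1)\ti\Delta\varrho$ giving $\ti^{k+3}\varrho$ paired against $\tilde A\ti^k\omega$, but integrating by parts twice moves two derivatives onto $\tilde A\ti^k\omega$, producing $\ti^{k+2}\omega$ times $\ti^{k+1}\varrho$ with prefactor $\varrho+\rho^*-1=O(\delta)$, hence $O(\delta)\|\ti^{k+1}\varrho\|_1\|\ti^{k+2}\omega\|$; these higher $\omega$-derivatives for $k\le 3$ stay within $\ti^4\omega$, which is controlled because the lemma statement is about $1\le k\le 3$ and the excerpt already announces estimates ``on $\ti\omega$ and its derivatives up to $\ti^4\omega$.'' The extra $C\varepsilon^{-1}\|\ti^k\omega\|^2$ on the right comes from Young's inequality when separating $\ti^{k+1}\varrho$-type errors: a typical cross term $\langle\ti^k\omega,\ti^{k+1}\varrho\rangle$ from the symmetrization residue is split as $\varepsilon\|\ti^{k+1}\varrho\|^2+\varepsilon^{-1}\|\ti^k\omega\|^2$, and similarly the interior force terms that contain only $k$ derivatives of $\omega$ but $k+1$ of $\varrho$. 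Collecting, one arrives at \eqref{As3.11} with the stated $C(\delta+\varepsilon)\|\ti(\varrho,\omega)\|^2_{k+1,k-1}+C\varepsilon^{-1}\|\ti^k\omega\|^2$.

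The main obstacle, as usual in these quantum-fluid problems, is the top-order quantum term: the highest derivative appearing is $\ti^{k+3}\varrho$ (from $(\varrho+\rho^*-1)\ti\Delta\varrho$ in $J$) and, on the viscous side via $f$, $\ti^{k+2}\omega$, neither of which is directly controlled by the basic energy $\|(\varrho,\omega)\|_{4,3}$ when $k=3$. The resolution is structural: each such term is multiplied either by $\varrho+\rho^*-1$ or by $\varrho/(\rho^*(\varrho+\rho^*))$, which are $O(\delta)$ in $L^\infty$, and by integrating by parts one redistributes derivatives so that no factor exceeds $\ti^{k+1}\varrho$ or $\ti^{k+2}\omega\,(\le\ti^4\omega)$; the small coefficient then lets the resulting term be absorbed into the good dissipation $d_1\|\ti^{k+1}\omega\|^2$ or into the $C(\delta+\varepsilon)\|\ti(\varrho,\omega)\|^2_{k+1,k-1}$ on the right. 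The $R_F^3(t)$ remainder, estimated in $L^{3/2}$ by $C\delta\|(\ti^2\varrho,\ti^3\omega)\|_{1,0}$ in the previous lemma, is paired against $\ti^3\omega\in L^6$ via Hölder (since $\f23+\f16=\f56\ne 1$ — actually one uses $L^{6/5}$–$L^6$ duality, so I would instead read $R_F^3$ in $L^{6/5}$; in any case the $\delta$ factor makes it harmless), giving a contribution $\le C\delta\|\ti(\varrho,\omega)\|^2_{4,2}$. A secondary technical point is verifying the coercivity constant $d_1>0$ is uniform: since $\tilde A/\rho^*\sim 1$ and $3\lambda+2\mu\ge 0$, the quadratic form $\int \f{\tilde A}{\rho^*}(\mu|\ti^{k+1}\omega|^2+(\mu+\lambda)|\Div\ti^k\omega|^2)$ is bounded below by $c\mu\|\ti^{k+1}\omega\|^2$ minus commutator terms that are $O(\delta)\|\ti^{k+1}\omega\|^2$, which fixes $d_1$ once $\delta$ is small.
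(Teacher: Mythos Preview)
Your overall strategy matches the paper's: differentiate, test the two equations against $\ti^k\varrho$ and $\tilde A\,\ti^k\omega$, and use the continuity equation to convert the principal quantum term $\f{\hbar^2}{4}\langle\ti^{k}\ti\Delta\varrho,\tilde A\,\ti^k\omega\rangle$ into $-\f{\hbar^2}{8}\f{d}{dt}\langle\hat A\,\ti^{k+1}\varrho,\ti^{k+1}\varrho\rangle$ plus lower order. However, there are two concrete gaps.

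First, your handling of the quantum remainder $J$ at top order fails. You propose to integrate by parts \emph{twice} in $\langle\ti^k((\varrho+\rho^*-1)\ti\Delta\varrho),\tilde A\,\ti^k\omega\rangle$, landing on $\ti^{k+1}\varrho$ against $\ti^{k+2}\omega$, and then assert that ``for $k\le 3$ this stays within $\ti^4\omega$''. For $k=3$ it gives $\ti^5\omega$, which is not in the functional framework ($\omega\in H^3$ only) and cannot be absorbed by anything. The paper instead integrates by parts \emph{once}, obtaining
\[
\big|\langle\ti^{k-1}((\varrho+\rho^*-1)\ti\Delta\varrho),\ti(\tilde A\,\ti^k\omega)\rangle\big|
\le C\sum_{l\le k-1}\|\ti^l(\varrho+\rho^*-1)\|_{L^\infty}\|\ti^{k+2-l}\varrho\|\,\|\ti^{k+1}\omega\|,
\]
so the worst pairing is $\ti^{k+2}\varrho$ against $\ti^{k+1}\omega$; for $k=3$ that is $\ti^5\varrho$ (sitting in $\|\ti\varrho\|_{H^4}$ on the right) against $\ti^4\omega$ (absorbed into $d_1\|\ti^{k+1}\omega\|^2$ via the $O(\delta)$ prefactor). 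The single IBP, not two, is what keeps the scheme closed.

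Second, for $k=0$ the right side of \eqref{As3.10} must involve only $\|\ti\varrho\|_1^2$ (and $\|\ti\omega\|^2$, absorbed), with no undifferentiated $\|\varrho\|$ or $\|\omega\|$. Terms such as $\langle|\ti u^*|\,|\omega|,\tilde A\,|\omega|\rangle$, $\langle|\ti\rho^*|\,|\varrho|,\tilde A\,|\omega|\rangle$, $\langle|G|(|\varrho|+|\omega|),\tilde A\,|\omega|\rangle$ cannot be closed by Sobolev embedding alone. The paper uses the Hardy inequality together with the weighted $L^\infty$ bounds on the stationary solution, e.g.\ $\|(1+|x|)^2\ti u^*\|_{L^\infty}\|\omega/|x|\|^2\le C\delta\|\ti\omega\|^2$. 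You should make this mechanism explicit. A minor point: for $R_F^3$ the pairing in the paper is $L^{3/2}$--$L^3$ (so $\|R_F^3\|_{L^{3/2}}\|\ti^3\omega\|_{L^3}$), not $L^{6/5}$--$L^6$.
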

\begin{proof}
Taking $k$-$th$ spatial derivatives to $\eqref{A3.1}_1,~\eqref{A3.1}_2$, and multiplying the resultant equation by $\ti^k \varrho,~\tilde{A}\ti^k \omega$, respectively, summing up them yields
\begin{eqnarray*}
    && \f{1}{2}\f{d}{dt}\|\ti^k\varrho\|^2+\langle\tilde{A}\ti^k \omega_t,\ti^k \omega\rangle-\bigg\langle\f{\tilde{A}}{\rho^*}\ti^k(\mu\Delta\omega+(\mu+\lambda)\ti\Div \omega),\ti^k \omega\bigg\rangle\\
    &&=\f{\hbar^2}{4}\langle\ti^k\ti\Delta\varrho,\tilde{A}\ti^k \omega\rangle+\langle\ti^k(u^*\varrho)+\sum_{l<k}C_k^l\ti^{k-l}(\rho^*+\varrho)\ti^l\omega,\ti^{k+1}\varrho\rangle\\
    &&~~+\bigg\langle\ti^k f+\sum_{l<k}C_k^l\bigg\{\bigg(\ti^{k-l}\f{1}{\rho^*}\bigg)\ti^l(\mu\Delta\omega+(\mu+\lambda)\ti\Div \omega)-(\ti^{k-l}A)\ti^{l+1}\varrho\bigg\},\tilde{A}\ti^k \omega\bigg\rangle.
\end{eqnarray*}
By integrating by parts, we have
\begin{eqnarray*}
    &&\f{1}{2} \f{d}{dt}(\|\ti^k\varrho\|^2+\langle\tilde{A}\ti^k \omega,\ti^k \omega\rangle)+\bigg\langle\f{\tilde{A}}{\rho^*}\ti^{k+1}\omega,\ti^{k+1}\omega\bigg\rangle\\
    &&\leq \f{\hbar^2}{4}\langle\ti^k\ti\Delta\varrho,\tilde{A}\ti^k \omega\rangle+|\langle\ti^k(u^*\varrho),\ti^{k+1}\varrho\rangle|\\
    &&~~+\mu\bigg|\bigg\langle\ti\bigg(\f{\tilde{A}}{\rho^*}\bigg)\ti^k\omega,\ti^{k+1}\omega\bigg\rangle\bigg|+(\mu+\lambda)\bigg|\bigg\langle\ti\bigg(\f{\tilde{A}}{\rho^*}\bigg)\cdot \ti^k\omega,\ti^k\Div \omega\bigg\rangle\bigg|\\
    &&~~+\bigg|\bigg\langle\sum_{l<k}C_k^l\bigg\{\bigg(\ti^{k-l}\f{1}{\rho^*}\bigg)\ti^l(\mu\Delta\omega+(\mu+\lambda)\ti\Div \omega)-(\ti^{k-l}A)\ti^{l+1}\varrho\bigg\},\tilde{A}\ti^k \omega\bigg\rangle\bigg|\\
    &&~~+|\langle\sum_{l<k}C_k^l\ti^{k-l}(\rho^*+\varrho)\ti^l\omega,\ti^{k+1}\varrho\rangle|+|\langle\ti^k f,\tilde{A}\ti^k \omega\rangle|+\f{1}{2}|\langle\tilde{A}_t\ti^k \omega,\ti^k \omega\rangle|\\
    &&=\sum\limits_{j=1}^8I_j.
\end{eqnarray*}
For the term $I_1$, by using integrations by parts and $\eqref{A3.1}_1$, we deduce
\begin{align*}
I_1&=-\f{\hbar^2}{4}\langle\ti^k\Delta\varrho,\Div(\tilde{A}\ti^k \omega)\rangle\\
&=-\f{\hbar^2}{4}\langle\ti^k\Delta\varrho,(\ti\hat{A})(\rho^*+\varrho)\ti^k \omega+\hat{A}\Div\{(\rho^*+\varrho)\ti^k \omega\}\rangle\\
&=-\f{\hbar^2}{4}\langle\ti^k\Delta\varrho,(\ti\hat{A})(\rho^*+\varrho)\ti^k \omega\rangle+\f{\hbar^2}{4}\langle\ti^k\Delta\varrho,\hat{A}\ti^k\varrho_t\rangle\\
&~~~~+\f{\hbar^2}{4}\sum_{l<k}C_k^l\langle\ti^k\Delta\varrho,\hat{A}\Div\{\ti^{k-l}(\rho^*+\varrho)\ti^l \omega\}\rangle+\f{\hbar^2}{4}\langle\ti^k\Delta\varrho,\hat{A}\ti^{k+1}(\varrho u^*)\rangle
\end{align*}
\begin{align*}
&=-\f{\hbar^2}{4}\langle\ti^k\Delta\varrho,(\ti\hat{A})(\rho^*+\varrho)\ti^k \omega\rangle-\f{\hbar^2}{8}\f{d}{dt}\langle\ti^{k+1}\varrho,\hat{A}\ti^{k+1}\varrho\rangle\\
&~~~~+\f{\hbar^2}{8}\langle\ti^{k+1}\varrho,\hat{A}_t\ti^{k+1}\varrho\rangle+\f{\hbar^2}{4}\langle\ti^{k+1}\varrho,\ti\hat{A}\Div\{(\rho^*+\varrho)\ti^k \omega\}\rangle\\
&~~~~+\f{\hbar^2}{4}\sum_{l<k}C_k^l\langle\ti^{k+1}\varrho,\ti\hat{A}\Div\{\ti^{k-l}(\rho^*+\varrho)\ti^l \omega\}\rangle+\f{\hbar^2}{4}\langle\ti^{k+1}\varrho,\ti\hat{A}\ti^{k+1}(\varrho u^*)\rangle\\
&~~~~+\f{\hbar^2}{4}\sum_{l<k}C_k^l\langle\ti^k\Delta\varrho,\hat{A}\Div\{\ti^{k-l}(\rho^*+\varrho)\ti^l \omega\}\rangle+\f{\hbar^2}{4}\langle\ti^k\Delta\varrho,\hat{A}\ti^{k+1}(\varrho u^*)\rangle\\
&=I_{11}+I_{12}+I_{13}+I_{14}+I_{15}+I_{16}+I_{17}+I_{18}.
\end{align*}
For $I_{11}$, we have
\begin{align*}
I_{11}\leq&\|\ti^{k+2}\varrho\|\|\ti(\varrho,\rho^*)\|_{L^3}\|\ti^k\omega\|_{L^6}\\
\leq& C\delta\|\ti(\varrho,\omega)\|_{k+1,k}^2.
\end{align*}
For $I_{13}$, we have
\begin{align*}
I_{13}=& \f{\hbar^2}{8}\langle \hat{B}\varrho_t\ti^{k+1}\varrho,\ti^{k+1}\varrho\rangle\\
=& -\f{\hbar^2}{8}\langle \hat{B}\Div(u^*\varrho+(\rho^*+\varrho)\omega)\ti^{k+1}\varrho,\ti^{k+1}\varrho\rangle\\
=&\f{\hbar^2}{8}\langle u^*\varrho+(\rho^*+\varrho)\omega,\ti(\hat{B}\ti^{k+1}\varrho\cdot\ti^{k+1}\varrho)\rangle\\
\leq&C\{(\|\omega\|_{L^3}+\|u^*\|_{L^6}\|\varrho\|_{L^6})\|\ti^{k+2}\varrho\|\|\ti^{k+1}\varrho\|_{L^6}\\
&+\|(\varrho,\omega)\|_{L^6}\|(\ti \rho^*,\ti \varrho)\|\|\ti^{k+1}\varrho\|_{L^6}^2\},\\
\leq&C\delta\|\ti \varrho\|^2_{k+1},
\end{align*}
where
\begin{align*}
\hat{B}=\f{1}{P'(\rho^*+\varrho)}\bigg[1-\f{P''(\rho^*+\varrho)}{P'(\rho^*+\varrho)}(\rho^*+\varrho)\bigg].
\end{align*}
For $I_{17}$, we have
\begin{align*}
I_{17}\leq& C\sum_{l<k}C_k^l\|\ti^{k+2}\varrho\|\|\hat{A}\|_{L^6}(\|\ti^{k+1-l}(\rho^*+\varrho)\|_{L^6}\|\ti^l \omega\|_{L^6}+\|\ti^{k-l}(\rho^*+\varrho)\|_{L^6}\|\ti^{l+1}\omega\|_{L^6}),\\
\leq& C\delta\|\ti(\varrho,\omega)\|_{k+1,k}^2.
\end{align*}
The other terms $I_{14}$-$I_{16}$ and $I_{18}$ can be obtained similarly, we have
\begin{align*}
I_{1}\leq C\delta\|\ti(\varrho,\omega)\|_{k+1,k}^2.
\end{align*}
For the term $I_7$, we have
\begin{align*}
I_7\leq&\bigg|\bigg\langle\f{\varrho(t)}{\rho^*(\varrho+\rho^*)}\ti^k(\mu\Delta\omega+(\mu+\lambda)\ti\Div \omega),\tilde{A}\ti^k \omega\bigg\rangle\bigg|\\
&+|\langle F_k,\tilde{A}\ti^k \omega\rangle|+ \f{\hbar^2}{4}|\langle\ti^k J,\tilde{A}\ti^k \omega\rangle|\\
&=I_{71}+I_{72}+I_{73}.
\end{align*}
For $I_{71}$, by integration by parts, we obtain
\begin{align*}
I_{71}\leq&\bigg|\bigg\langle\ti\bigg(\f{\varrho}{\rho^*(\varrho+\rho^*)}\bigg)\ti^{k-1}(\mu\Delta\omega+(\mu+\lambda)\ti\Div \omega),\tilde{A}\ti^k \omega\bigg\rangle\bigg|\\
&+\bigg|\bigg\langle\f{\varrho}{\rho^*(\varrho+\rho^*)}\ti^{k-1}(\mu\Delta\omega+(\mu+\lambda)\ti\Div \omega),\ti(\tilde{A}\ti^k \omega)\bigg\rangle\bigg|\\
\leq&\|(\ti\varrho,\ti \rho^* )\|_{L^3}\|\ti^{k+1}\omega\|\|\ti^{k}\omega\|_{L^6}+\|\varrho\|_{L^\infty}\|\ti^{k+1}\omega\|^2\\
\leq&\delta\|\ti^{k+1}\omega\|^2.
\end{align*}
For $I_{72}$, if $k=0$, we have
\begin{align*}
I_{72}\leq&C\bigg\{\|(1+|x|)^2\ti u^*\|_{L^\infty}\bigg\|\f{\omega}{|x|}\bigg\|\bigg\|\f{\omega}{|x|}\bigg\|+\|(1+|x|)u^*\|_{L^\infty}\bigg\|\f{\omega}{|x|}\bigg\|\|\ti\omega\|\\
&+\|\omega\|_{L^3}\|\omega\|_{L^6}\|\ti\omega\|_{L^2}+\|(1+|x|)^2\ti \rho^*\|_{L^\infty}\bigg\|\f{\omega}{|x|}\bigg\|\bigg\|\f{\varrho}{|x|}\bigg\|\\
&+\|(1+|x|)^2\ti^2 u^*\|_{L^\infty}\bigg\|\f{\omega}{|x|}\bigg\|\bigg\|\f{\varrho}{|x|}\bigg\|+\|(1+|x|)^2G\|_{L^\infty}\bigg(\bigg\|\f{\omega}{|x|}\bigg\|\bigg\|\f{\omega}{|x|}\bigg\|
+\bigg\|\f{\varrho}{|x|}\bigg\|\bigg\|\f{\omega}{|x|}\bigg\|\bigg)\bigg\}\\
\leq&C\delta\|\ti(\varrho, \omega)\|^2.
\end{align*}
and if $1\leq k \leq 3$,
\begin{align*}
I_{72}\leq&C\bigg\{\|\ti^{k+1}u^*\|_{L^3}\|\omega\|_{L^6}\|\ti^k\omega\|+\sum\limits_{\nu=1}^{k+1}\|\ti^\nu\omega\|\|\ti^k\omega\|\\
&+\sum\limits_{\nu=1}^{k+1}(\|\ti^\nu\rho^*\|+\|\ti^{\nu+1}u^*\|)\|\varrho\|_{L^6}\|\ti^k\omega\|_{L^3}+\sum\limits_{\nu=1}^{k}\|\ti^\nu\varrho\|\|\ti^k\omega\|\\
&+(\|\varrho\|_{L^6}+\|\omega\|_{L^6})\sum\limits_{\nu=0}^{k}\|\ti^\nu G\|_{L^3}\|\ti^k\omega\|+\|R_F^{k}\|_{L^\f{3}{2}}\|\ti^k\omega\|_{L^3}\bigg\}\\
&\leq C(\delta+\varepsilon)\|\ti(\varrho, \omega)\|_{k+1,k}^2+\varepsilon^{-1}\|\ti^k \omega\|^2.
\end{align*}
For $I_{73}$, recall \eqref{J}, we have
\begin{align}\label{J3}
I_{73}\leq& C(|\langle\nabla^k(\varrho\nabla\Delta\rho^*),\tilde{A}\nabla^k\omega\rangle|+|\langle\nabla^k((\varrho+\rho^*-1)\nabla\Delta\varrho),\tilde{A}\nabla^k\omega\rangle|
+|\langle\nabla^k(|\nabla\rho^*|^2\varrho),\tilde{A}\nabla^k\omega\rangle|\notag\\
&+|\langle\nabla^k(|\nabla\varrho|^2\nabla\rho^*),\tilde{A}\nabla^k\omega\rangle|
+|\langle\nabla^k(|\nabla\rho^*|^2\nabla\varrho),\tilde{A}\nabla^k\omega\rangle|
+|\langle\nabla^k(|\nabla\varrho|^2\nabla\varrho),\tilde{A}\nabla^k\omega\rangle|\notag\\
&+|\langle\nabla^k(\varrho\nabla\rho^*\ti^2\rho^*),\tilde{A}\nabla^k\omega\rangle|
+|\langle\nabla^k(\nabla\varrho\ti^2\rho^*),\tilde{A}\nabla^k\omega\rangle|
+|\langle\nabla^k(\nabla\varrho\ti^2\varrho),\tilde{A}\nabla^k\omega\rangle|\notag\\
&+|\langle\nabla^k(\nabla\rho^*\ti^2\varrho),\tilde{A}\nabla^k\omega\rangle|).
\end{align}
If $k=0$, we have
\begin{align*}
|\langle\varrho\nabla\Delta\rho^*,\tilde{A}\omega\rangle|&\leq\|\varrho\|_{L^6}\|\nabla\Delta\rho^*\|_{L^2}\|\tilde{A}\|_{L^6}\|\omega\|_{L^6}\\
&\leq C\delta(\|\ti \varrho\|^2+\|\ti \omega\|^2),
\end{align*}
and if $1\leq k \leq 3$,
\begin{align*}
&|\langle\nabla^k(\varrho\nabla\Delta\rho^*),\tilde{A}\nabla^k\omega\rangle|\\
&=|\langle\nabla^{k-1}(\varrho\nabla\Delta\rho^*),\ti(\tilde{A}\nabla^k\omega)\rangle|\\
&\leq C \sum_{0\leq l\leq k-1}\|\ti^l\varrho\|_{L^6}\|\ti^{k+2-l}\rho^*\|_{L^3}(\|\ti\tilde{A}\|_{L^\infty}\|\nabla^k\omega\|_{L^2}+\|\tilde{A}\|_{L^\infty}\|\nabla^{k+1}\omega\|_{L^2})\\
&\leq C\delta\|\ti(\varrho,\omega)\|_{k-1,k}^2.
\end{align*}
If $k=0$, we have
\begin{align*}
&|\langle(\varrho+\rho^*-1)\nabla\Delta\varrho,\tilde{A}\omega\rangle|\\
&\leq \|\ti(\varrho+\rho^*-1)\|_{L^\infty}\|\Delta\varrho\|_{L^2}\|\tilde{A}\|_{L^3}\|\omega\|_{L^6}\\
&~~+\|(\varrho+\rho^*-1)\|_{L^\infty}\|\Delta\varrho\|_{L^2}(\|\ti\tilde{A}\|_{L^3}\|\omega\|_{L^6}+\|\tilde{A}\|_{L^\infty}\|\ti\omega\|_{L^2})\\
&\leq C\delta(\|\ti^2 \varrho\|^2+\|\ti \omega\|^2),
\end{align*}
and if $1\leq k \leq 3$,
\begin{align*}
&|\langle\nabla^k((\varrho+\rho^*-1)\nabla\Delta\varrho),\tilde{A}\nabla^k\omega\rangle|\\
&=|\langle\nabla^{k-1}((\varrho+\rho^*-1)\nabla\Delta\varrho),\ti(\tilde{A}\nabla^k\omega)\rangle|\\
&\leq C \sum_{0\leq l\leq k-1}\|\ti^l(\varrho+\rho^*-1)\|_{L^\infty}\|\ti^{k+2-l}\varrho\|_{L^2}(\|\ti\tilde{A}\|_{L^\infty}\|\nabla^k\omega\|_{L^2}+\|\tilde{A}\|_{L^\infty}\|\nabla^{k+1}\omega\|_{L^2})\\
&\leq C\delta\|\ti(\varrho,\omega)\|_{k+1,k}^2.
\end{align*}
If $k=0$, we have
\begin{align*}
|\langle\nabla\varrho\Delta\varrho,\tilde{A}\omega\rangle|
&\leq\|\ti\varrho\|_{L^6}\|\Delta\varrho\|_{L^2}\|\tilde{A}\|_{L^6}\|\omega\|_{L^6}\\
&\leq C\delta(\|\ti^2 \varrho\|^2+\|\ti \omega\|^2),
\end{align*}
and if $1\leq k \leq 3$,
\begin{align*}
&|\langle\nabla^k(\nabla\varrho\Delta\varrho),\tilde{A}\nabla^k\omega\rangle|\\
&=|\langle\nabla^{k-1}(\nabla\varrho\Delta\varrho),\ti(\tilde{A}\nabla^k\omega)\rangle|\\
&\leq C \sum_{0\leq l\leq k-1}\|\ti^{l+1}\varrho\|_{L^3}\|\ti^{k+1-l}\varrho\|_{L^6}(\|\ti\tilde{A}\|_{L^\infty}\|\nabla^k\omega\|_{L^2}+\|\tilde{A}\|_{L^\infty}\|\nabla^{k+1}\omega\|_{L^2})\\
&\leq C\delta\|\ti(\varrho,\omega)\|_{k+1,k}^2.
\end{align*}
The other terms on the right hand side of \eqref{J3} can be estimated similarly. Then for $0\leq k \leq 3$, we have
\begin{align*}
&I_{73}\leq C\delta \|\ti(\varrho,\omega)\|_{k+1,k}^2.
\end{align*}
Collecting these terms, we get
\begin{align*}
I_{7}\leq C(\delta+\varepsilon)\|\ti(\varrho, \omega)\|_{k+1,k}^2+\varepsilon^{-1}\|\ti^k \omega\|^2.
\end{align*}
The terms $I_2-I_6$ and $I_8$ can also be bounded by $C\delta \|\ti(\varrho,\omega)\|_{k+1,k}^2$. Choosing $$d_1=\min\limits_{\bar{\rho}/2\leq s \leq 3\bar{\rho}/2}\f{4\mu s^2}{5\bar{\rho}P'(s)}.$$Owing to the estimates $I_1 \thicksim I_{8} $ and the smallness of $\delta$ and $\varepsilon$,, we conclude the proof of this lemma.
\end{proof}
For the estimates on $\ti \varrho$ and its derivatives up to $\ti^5 \varrho$, we have
\begin{lemma}\label{As3.3}
Let $(\varrho,\omega)\in \mathfrak{C}(0,t_1;\mathcal{H}^{4,3})$ be a solution to the initial value problem \eqref{A3.1}. Then there exist constants $\delta_0,\varepsilon_0>0$ and $d_2>0$ such that $0<\delta\leq \delta_0$, $0<\varepsilon\leq \varepsilon_0$, we have
 \begin{eqnarray}\label{As3.40}
\f{d}{dt}\langle\omega,\ti \varrho\rangle+d_2\|\ti\varrho\|^2+{\f{\hbar^2}{4}}\|\ti^2\varrho\|^2\leq C\|\ti \omega\|^2.
	\end{eqnarray}
Moreover, for $1\leq k \leq 3$, it holds that
\begin{eqnarray}\label{As3.41}
\f{d}{dt}\langle\ti^k\omega,\ti^{k+1} \varrho\rangle+d_2\|\ti^{k+1}\varrho\|^2+{\f{\hbar^2}{4}}\|\ti^{k+2}\varrho\|^2\leq C\|\ti (\varrho,\omega)\|^2_{k-1,k},
\end{eqnarray}
where the constant $C>0$ depends only on $\mu,\lambda,\hbar$.
\end{lemma}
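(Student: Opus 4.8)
The identity to prove is a "cross-term" estimate: test the momentum equation $\eqref{A3.1}_2$ with $\nabla^{k}\nabla\varrho$ (equivalently, take $\nabla^{k}$ of $\eqref{A3.1}_2$, multiply by $\nabla^{k+1}\varrho$, and integrate), which produces a coercive term $d_2\|\nabla^{k+1}\varrho\|^2$ coming from the pressure term $A(t)\nabla\varrho$, and a coercive term $\frac{\hbar^2}{4}\|\nabla^{k+2}\varrho\|^2$ coming from the Bohm term $-\frac{\hbar^2}{4}\nabla\Delta\varrho$. The time-derivative structure $\frac{d}{dt}\langle\nabla^k\omega,\nabla^{k+1}\varrho\rangle$ arises because the leading term $\langle\nabla^k\omega_t,\nabla^{k+1}\varrho\rangle$ must be integrated by parts in $t$ against $\langle\nabla^k\omega,\nabla^{k+1}\varrho_t\rangle$, and the latter is controlled via the continuity equation $\eqref{A3.1}_1$ by $\|\nabla\omega\|^2_{k,\cdot}$-type quantities. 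This is the standard Navier--Stokes-type "dissipation-for-density" argument, here complicated by the third-order quantum term.

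**Key steps, in order.** First I would apply $\nabla^k$ to $\eqref{A3.1}_2$ and pair with $\nabla^{k+1}\varrho$ in $L^2$. The viscous terms $\frac1{\rho^*}(\mu\Delta\omega+(\mu+\lambda)\nabla\Div\omega)$, after moving one derivative, are bounded by $\|\nabla^{k+1}\omega\|\,\|\nabla^{k+1}\varrho\|$ and absorbed by Young's inequality into $\varepsilon\|\nabla^{k+1}\varrho\|^2 + C_\varepsilon\|\nabla^{k+1}\omega\|^2$; the coefficient $1/\rho^*$ (and the commutators with lower-order $\nabla^{k-l}(1/\rho^*)$) are handled using $\rho^* = 1+\sigma^*$ with $(\sigma^*,u^*)\in\mathcal{F}^{4,5}_\epsilon$ small. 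Second, for the pressure term $\langle\nabla^k(A(t)\nabla\varrho),\nabla^{k+1}\varrho\rangle$, the principal part is $\langle A(t)\nabla^{k+1}\varrho,\nabla^{k+1}\varrho\rangle \geq d_2\|\nabla^{k+1}\varrho\|^2$ since $A(t)=P'/(\varrho+\rho^*)$ is bounded below, and the commutator $[\nabla^k,A]\nabla\varrho$ contributes $C\delta\|\nabla\varrho\|^2_{k,\cdot}$ using Lemma~\ref{flma2.3} and smallness. Third, the Bohm term: $\frac{\hbar^2}{4}\langle\nabla^k\nabla\Delta\varrho,\nabla^{k+1}\varrho\rangle = \frac{\hbar^2}{4}\|\nabla^{k+2}\varrho\|^2$ exactly (after one integration by parts), giving the second coercive term with no error. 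Fourth, the leading time term: $\langle\nabla^k\omega_t,\nabla^{k+1}\varrho\rangle = \frac{d}{dt}\langle\nabla^k\omega,\nabla^{k+1}\varrho\rangle - \langle\nabla^k\omega,\nabla^{k+1}\varrho_t\rangle$, and substituting $\varrho_t = -\Div(u^*\varrho) - \Div[(\varrho+\rho^*)\omega]$ from $\eqref{A3.1}_1$ and integrating by parts bounds $|\langle\nabla^k\omega,\nabla^{k+1}\varrho_t\rangle|$ by $C\|\nabla^{k+1}\omega\|^2 + C\delta\|\nabla\varrho\|^2_{k,\cdot}$ (the $\rho^*\nabla\omega$ piece gives the clean $\|\nabla\omega\|^2$, the rest are $\delta$-small). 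Fifth, the forcing term $\langle\nabla^k f,\nabla^{k+1}\varrho\rangle$: using the decomposition of $\nabla^k f$ from the preceding lemma — the viscous piece $\frac{\varrho}{\rho^*(\varrho+\rho^*)}\nabla^k(\mu\Delta\omega+\dots)$, the $F_k$ piece, and the quantum piece $\frac{\hbar^2}{4}\nabla^k J$ — each is estimated by $\varepsilon\|\nabla^{k+2}\varrho\|^2 + C\delta\|\nabla\varrho\|^2_{k+1,\cdot} + C\|\nabla\omega\|^2_{\cdot,k} + \dots$, with the worst terms (those containing $\nabla\Delta\varrho$ or $\nabla^{k+2}\varrho$, e.g. $(\varrho+\rho^*-1)\nabla\Delta\varrho$ in $J$) producing a term of the form $\delta\|\nabla^{k+2}\varrho\|^2$ that can be absorbed into the coercive $\frac{\hbar^2}{4}\|\nabla^{k+2}\varrho\|^2$ after choosing $\delta$ small. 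Finally, collecting all bounds, choosing $\varepsilon$ and then $\delta$ sufficiently small to absorb all $\varepsilon\|\nabla^{k+1}\varrho\|^2$ and $\delta\|\nabla^{k+2}\varrho\|^2$ errors into the left side, and fixing $d_2 = \min_{\bar\rho/2\le s\le 3\bar\rho/2} P'(s)/(2s)$ (say, halved to leave room), yields \eqref{As3.40}--\eqref{As3.41}, with all remaining errors of the form $C\|\nabla\omega\|^2_{k-1,k}$ and $C\|\nabla\varrho\|^2_{k-1,k}$ (the latter appearing only for $k\ge1$ via commutators with lower derivatives).

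**Main obstacle.** The delicate point is controlling the contributions of the quantum term $J(t)$ (and the $\nabla^k$ of the $\nabla\Delta\varrho$-type pieces inside $f$) when they get paired against $\nabla^{k+1}\varrho$: these naively produce $\|\nabla^{k+2}\varrho\|$-level quantities, i.e. they are of the same order as the coercive Bohm dissipation, so one cannot afford any loss — every such term must carry a genuinely small prefactor ($\delta$ from $\|(\sigma^*,u^*)\|_{\mathcal{F}^{4,5}} + \|(\varrho,\omega)\|_{4,3}$, or from $\varrho+\rho^*-1$ being small) so that it can be absorbed, rather than merely being dominated by a lower-order norm. Keeping careful track of which factor in each product of $J$ is "small" and which carries the top derivative, and verifying that at $k=3$ the borderline term $R_F^3$ (only $L^{3/2}$-integrable) still pairs acceptably via Sobolev embedding $\|\nabla^3\omega\|_{L^3}\lesssim\|\nabla^3\omega\|_{H^{1}}$-type bounds, is where the real work lies; the rest is bookkeeping analogous to Lemma~\ref{As3.2}.
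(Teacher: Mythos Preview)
Your proposal is correct and follows essentially the same approach as the paper: apply $\nabla^k$ to $\eqref{A3.1}_2$, test against $\nabla^{k+1}\varrho$, extract the pressure and Bohm coercivities, rewrite $\langle\nabla^k\omega_t,\nabla^{k+1}\varrho\rangle$ via the continuity equation, and absorb all quantum remainders from $J(t)$ using their $\delta$-small prefactors. One small slip: after integrating by parts, the viscous term produces $\|\nabla^{k+1}\omega\|\,\|\nabla^{k+2}\varrho\|$ (not $\|\nabla^{k+1}\varrho\|$), so it is absorbed into the Bohm term $\frac{\hbar^2}{4}\|\nabla^{k+2}\varrho\|^2$ rather than the pressure term---exactly as the paper does in its estimate of $J_6$.
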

\begin{proof}
Appling $\ti^k$ to $\eqref{A3.1}_2$ and multiplying the resultant identity by $\ti^{k+1}\varrho$, then integrating over $\mathbb{R}^3$, we have
\begin{align*}
&\langle A\ti^{k+1}\varrho,\ti^{k+1}\varrho\rangle+\f{\hbar^2}{4}\|\ti^{k+2}\varrho\|^2\\
&=-\f{d}{dt}\langle \ti^{k}\omega,\ti^{k+1}\varrho\rangle+\langle \ti^{k+1}\omega,\ti^{k+1}((\varrho+\rho^*)\omega)\rangle
+\langle \ti^{k+1}\omega,\ti^{k+1}(\varrho u^*)\rangle+\langle\ti^k f,\ti^{k+1} \varrho\rangle\\
&+\sum_{l<k}C_k^l\langle \ti^{k-l}A\ti^{l+1}\varrho,\ti^{k+1}\varrho\rangle
+\bigg\langle\ti^{k}\bigg(\f{1}{\rho^*}(\mu\Delta\omega+(\mu+\lambda)\ti\Div \omega)\bigg),\ti^{k+1} \varrho\bigg\rangle=\sum\limits_{i=1}^{6}J_i.
\end{align*}
Similar to the estimates for the terms in the proof of Lemma \ref{As3.2}, we have
\begin{align*}
&J_{4}\leq C(\delta+\varepsilon)\|\ti^{k+1}\varrho\|_{1}^2+\varepsilon^{-1}\|\ti (\varrho,\omega)\|_{k-1,k}^2,\\
&J_{6}\leq C\varepsilon\|\ti^{k+2}\varrho\|^2+\varepsilon^{-1}\|\ti \omega\|_{k}^2,\\
&J_{2},J_{3},J_{5}\leq C\delta\|\ti^{k+2}\varrho\|^2+C\delta\|\ti (\varrho,\omega)\|_{k,k}^2.
\end{align*}
Choosing $$d_2=\min\limits_{\bar{\rho}/2\leq s \leq 3\bar{\rho}/2}\f{P'(s)}{s}.$$Combing these estimates and the smallness of $\delta$ and $\varepsilon$, we complete the proof of lemma \ref{As3.3}.
\end{proof}
Now, we start to give the proof of Proposition \ref{Pro3.2}. Let $(\varrho,\omega)\in \mathfrak{C}(0,t_1;\mathcal{H}^{4,3})$ be a solution to the problem \eqref{A3.1}. Furthermore, we suppose that $\|(\varrho,\omega)(t)\|_{4,3}+\|(\sigma^*,u^*)\|_{\mathcal{F}^{4,5}}\leq\delta$, where $\delta$ is small enough such that the results obtained in Lemma \ref{As3.2}-\ref{As3.3} hold true.

Define
\begin{eqnarray}\label{definition1}
[\varrho,\omega](t)=\|\varrho(t)\|^2+\langle\hat{A}(t)\ti \varrho(t),\ti \varrho(t)\rangle+\langle\tilde{A}(t)\omega(t),\omega(t)\rangle,
	\end{eqnarray}
where $\hat{A}(t)$, $\tilde{A}(t)$ are defined as in Lemma \ref{As3.2}.

Multiplying \eqref{As3.40} by a small constant ${\lambda_0}$, then summing up the resultant equation and \eqref{As3.10} implies that
\begin{eqnarray}\label{As3.49}
\f{d}{dt}\{a_0[\varrho,\omega]+b_0\langle\omega,\ti \varrho\rangle\}+\|(\ti \varrho,\ti \omega)\|_{1,0}^2\leq 0,
	\end{eqnarray}
where and hereafter $a_\nu,b_\nu>0$, $\nu=0,1,\cdots,3$ are constants depending only on $\mu,\lambda,\hbar$.

Summing up \eqref{As3.41}, \eqref{As3.11} and \eqref{As3.49} with $k=1$ yields
\begin{eqnarray}\label{As3.50}
\f{d}{dt}\bigg\{\sum\limits_{\nu=0}^{1}a_\nu[\ti^\nu\varrho,\ti^\nu\omega]+\sum\limits_{\nu=0}^{1}b_\nu\langle\ti^\nu\omega,\ti^{\nu+1} \varrho\rangle\bigg\}+\|(\ti \varrho,\ti \omega)\|_{2,1}^2\leq 0.
	\end{eqnarray}
Similarly, summing up \eqref{As3.41}, \eqref{As3.11} and \eqref{As3.50} with $k=2$ gives
\begin{eqnarray}\label{As3.51}
\f{d}{dt}\bigg\{\sum\limits_{\nu=0}^{2}a_\nu[\ti^\nu\varrho,\ti^\nu\omega]+\sum\limits_{\nu=0}^{2}b_\nu\langle\ti^\nu\omega,\ti^{\nu+1} \varrho\rangle\bigg\}+\|(\ti \varrho,\ti \omega)\|_{3,2}^2\leq 0.
	\end{eqnarray}
Also, when $k=3$, summing up \eqref{As3.41}, \eqref{As3.11} and \eqref{As3.51}, we obtain
\begin{eqnarray}\label{As3.52}
\f{d}{dt}\bigg\{\sum\limits_{\nu=0}^{3}a_\nu[\ti^\nu\varrho,\ti^\nu\omega]+\sum\limits_{\nu=0}^{3}b_\nu\langle\ti^\nu\omega,\ti^{\nu+1} \varrho\rangle\bigg\}+\|(\ti \varrho,\ti \omega)\|_{4,3}^2\leq 0.
	\end{eqnarray}
Then integrating \eqref{As3.52} over $[0,t]$ implies that
\begin{eqnarray}\label{As3.53}
N[\varrho,\omega](t)+\int\limits_0^t\|(\ti \varrho,\ti \omega)(s)\|_{4,3}^2ds\leq N[\varrho,\omega](0),
	\end{eqnarray}
for any $t\in[0,t_1]$, where
\begin{eqnarray*}
N[\varrho,\omega](t)=\sum\limits_{\nu=0}^{3}a_\nu[\ti^\nu\varrho,\ti^\nu\omega](t)+\sum\limits_{\nu=0}^{3}b_\nu\langle\ti^\nu\omega(t),\ti^{\nu+1} \varrho(t)\rangle,~~t>0.
	\end{eqnarray*}
Set
\begin{eqnarray*}
B_0=\min\limits_{\bar{\rho}/2\leq s \leq 3\bar{\rho}/2}\{\tilde{A}(s),1\},~~B_1=\max\limits_{\bar{\rho}/2\leq s \leq 3\bar{\rho}/2}\{\tilde{A}(s),1\}.
	\end{eqnarray*}
Without loss of generality, we assume that $a_\nu\leq a_{\nu-1}$ and $b_\nu\leq a_\nu {\min\{B_0,1\}}/{4}$, we have
\begin{eqnarray}\label{As3.54}
\f{a_3}{4}B_0\|( \varrho, \omega)(s)\|_{4,3}^2\leq N[\varrho,\omega](t)\leq 2a_0B_1\|( \varrho, \omega)(s)\|_{4,3}^2,
	\end{eqnarray}
for any $t\in[0,t_1]$.

Combing \eqref{As3.53} with \eqref{As3.54}, we finish the proof of Proposition \ref{Pro3.2}. Hence, by a standard continuity argument, this closes the a priori estimates $\|(\varrho,\omega)(t)\|_{4,3}\leq\delta$ if we assume that $\|(\varrho,\omega)(0)\|_{4,3}\leq\delta$. The global existence can be achieved by a standard continuity argument combined with Proposition \ref{Pro3.1}. This completes the proof of Theorem \ref{thm1.2}.
\section{Decay for the nonlinear system}
This section is dedicated to prove Theorem \ref{thm1.3}. First, we obtain the energy estimates of the negative Sobolev norms of solutions. Then, we prove the decay rates of solutions around the steady state. We set $\varrho=\rho-\rho^*$, $\mathcal{M}=m-m^*$, the problem \eqref{1.1} can be rewritten as
\begin{equation}\label{4.1}
		\left\{ \begin{aligned}
			&\varrho_t+\mathrm{div} \mathcal{M}=0,\\
 			&\mathcal{M}_t-\mu\Delta \mathcal{M}-(\mu+\lambda)\ti\Div \mathcal{M}+\ti\varrho-\f{\hbar^2}{4}\ti\Delta\varrho=Q,\\
            &(\varrho,\mathcal{M})|_{t=0}=(\rho_0-\rho^*,\rho_0 u_0-\rho^*u^*).
		\end{aligned} \right.
	\end{equation}
	Here the nonlinear function $Q$ is defined as
	\begin{eqnarray*}
Q&=&F\varrho-\Div\bigg( \f{(\mathcal{M}+m^*)\otimes(\mathcal{M}+{m}^*)}{\varrho+\rho^*}-\f{m^*\otimes{m}^*}{\rho^*}\bigg)\\
        &&-\mu\Delta\bigg({\mathcal{M}}-\f{\mathcal{M}}{\varrho+\rho^*}\bigg)-\mu\Delta\bigg(\f{m^*}{\rho^*}-\f{m^*}{\varrho+\rho^*}\bigg)\\
        &&-(\mu+\lambda)\ti\Div\bigg({\mathcal{M}}-\f{\mathcal{M}}{\varrho+\rho^*}\bigg)-(\mu+\lambda)\ti\Div\bigg(\f{m^*}{\rho^*}-\f{m^*}{\varrho+\rho^*}\bigg)\\
        &&-(P'(\varrho+\rho^*)-1)\ti\varrho-(P'(\varrho+\rho^*)-P'(\rho^*)\ti\rho^*\\
		&&+\f{\hbar^2}4\bigg(\bigg(\f{|\ti (\varrho+\rho^*)|^2}{(\varrho+\rho^*)^2}-\f{|\ti {\rho^*}|^2}{(\rho^*)^2}\bigg)\ti {\rho^*}+\f{|\ti (\varrho+\rho^*)|^2}{(\varrho+\rho^*)^2}\ti \varrho\\
		&&~~~~~~~~~-\bigg(\f{\ti (\varrho+\rho^*) }{\varrho+\rho^*}-\f{\ti {\rho^*} }{\rho^*}\bigg)\Delta {\rho^*}-\f{\ti (\varrho+\rho^*) }{\varrho+\rho^*}\Delta \varrho\\
		&&~~~~~~~~~-\bigg(\f{\ti(\varrho+\rho^*)}{\varrho+\rho^*}-\f{\ti {\rho^*} }{\rho^*}\bigg)\ti^2{\rho^*}-\f{\ti (\varrho+\rho^*) }{\varrho+\rho^*}\ti^2\varrho\bigg).
	\end{eqnarray*}
By using the mean value theorem, we have
\begin{eqnarray}\label{Q}
Q&\sim& F\varrho+\mathcal{M}^2\ti(\varrho+\rho^*)+\mathcal{M}\ti \mathcal{M}+m^*\ti \mathcal{M}+\mathcal{M} \ti m^*+m^*\mathcal{M}\ti(\varrho+\rho^*)\notag\\
&&+\varrho m^*\ti m^*+(m^*)^2\ti\varrho+(\rho^*-1+\varrho)\ti \varrho+\varrho\ti \rho^*+(\rho^*-1+\varrho)\ti^2\mathcal{M}\notag\\
&&+\ti^2(\rho^*-1+\varrho)\mathcal{M}+\ti(\rho^*-1+\varrho)\ti\mathcal{M}+m^*\ti^2\varrho+\ti m^*\ti\varrho+\ti^2m^*\varrho\notag\\
&&+|\ti\rho^*|^2\ti \rho^*\varrho+\ti \rho^*\ti \varrho\ti( \varrho+\rho^*)+|\ti( \varrho+\rho^*)|^2\ti\varrho+\ti \rho^*\varrho\Delta\rho^*\notag\\
&&+\ti \varrho\Delta\rho^*+\Delta\varrho\ti( \varrho+\rho^*)+\ti \rho^*\varrho\ti^2\rho^*+\ti \varrho\ti^2\rho^*+\ti^2\varrho\ti( \varrho+\rho^*).
	\end{eqnarray}
First, we will give the negative Sobolev estimates for the problem \eqref{4.1}.
\begin{lemma}\label{lma5.1}
For $s\in(0,\f32)$, it holds that
\begin{eqnarray}\label{ff37}
&&\f{d}{dt}\int\rho^*(|\Lambda^{-s}\varrho|^2+\f{\hbar^2}{4}| \Lambda^{-s}\ti \varrho|^2+|\Lambda^{-s}\mathcal{M}|^2)dx+C\|\Lambda^{-s}\ti\mathcal{M}\|^2\notag\\
&&~~\leq C_1\delta\| \Lambda^{-s}\ti\varrho\|^2+C\delta(\|\varrho\|_{H^2}^2+\|\ti \mathcal{M}\|_{H^1}^2).
\end{eqnarray}
\end{lemma}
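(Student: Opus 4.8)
The plan is to obtain the negative Sobolev estimate by applying $\Lambda^{-s}$ to the system \eqref{4.1}, testing against appropriate weighted combinations of $\Lambda^{-s}\varrho$, $\Lambda^{-s}\ti\varrho$ and $\Lambda^{-s}\mathcal{M}$, and then controlling the resulting nonlinear terms using Lemma \ref{Ala3.3} (the $L^p$--$L^q$ boundedness of $\Lambda^{-s}$) together with the Gagliardo--Nirenberg inequality (Lemma \ref{fflma2.1}) and the smallness of $\delta$. First I would apply $\Lambda^{-s}$ to $\eqref{4.1}_1$ and $\eqref{4.1}_2$, multiply the first by $\rho^*\Lambda^{-s}\varrho$ and $\Lambda^{-s}\varrho - \f{\hbar^2}{4}\Lambda^{-s}\la\varrho$ (to capture both the $\varrho$ and $\ti\varrho$ pieces of the energy), and multiply the second by $\rho^*\Lambda^{-s}\mathcal{M}$, then integrate over $\R^3$ and sum. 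Integration by parts on the viscous term yields the dissipation $C\|\Lambda^{-s}\ti\mathcal{M}\|^2$, while the pressure term $\langle \Lambda^{-s}\ti\varrho, \rho^*\Lambda^{-s}\mathcal{M}\rangle$ and the quantum term $\langle \Lambda^{-s}\ti\la\varrho,\rho^*\Lambda^{-s}\mathcal{M}\rangle$ cancel (up to commutators with $\rho^*$) against the corresponding terms produced from testing $\eqref{4.1}_1$; the commutator terms carry a factor $\ti\rho^*$, hence a factor $\delta$, and are absorbed. This produces the total time derivative $\f{d}{dt}\int \rho^*(|\Lambda^{-s}\varrho|^2 + \f{\hbar^2}{4}|\Lambda^{-s}\ti\varrho|^2 + |\Lambda^{-s}\mathcal{M}|^2)\,dx$ plus a harmless $\int \rho^*_t(\cdots)\,dx$ term, which vanishes since $\rho^*$ is stationary.

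The core of the argument is estimating $\langle \Lambda^{-s}Q, \rho^*\Lambda^{-s}\mathcal{M}\rangle$, for which I would use the decomposition \eqref{Q} of $Q$ into quadratic and higher-order products. For a generic product term of the form $g_1 g_2$ appearing in \eqref{Q} (where each $g_i$ is one of $\varrho,\mathcal{M},m^*,\rho^*-1$ or their derivatives), I bound $\|\Lambda^{-s}(g_1 g_2)\| \leq \|g_1 g_2\|_{L^{1/(1/2+s/3)}} \leq \|g_1\|_{L^{p_1}}\|g_2\|_{L^{p_2}}$ by Lemma \ref{Ala3.3} and Hölder, where $1/p_1 + 1/p_2 = 1/2 + s/3$; one then chooses the Lebesgue exponents so that each factor lands in a space controlled by the $H^2$-norm of $\varrho$, the $H^1$-norm of $\ti\mathcal{M}$, or the $\mathcal{F}^{4,5}$-norm of $(\sigma^*,u^*)$ via Gagliardo--Nirenberg, always extracting a factor $\delta$ from one small factor. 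The terms that are linear in $(\varrho,\mathcal{M})$ but multiplied by a stationary coefficient (e.g. $F\varrho$, $\varrho\ti\rho^*$, $m^*\ti\mathcal{M}$, $\ti^2 m^*\varrho$, $|\ti\rho^*|^2\ti\rho^*\varrho$) require care: here I keep one $\Lambda^{-s}$ on the $(\varrho,\mathcal{M})$ factor and put the stationary factor in $L^\infty$ or a weighted $L^\infty$ space (available since $(\sigma^*,u^*)\in\mathcal{F}^{4,5}_\epsilon$, which controls $\|(1+|x|)^2(\sigma^*,\ti\sigma^*)\|_{L^\infty}$ etc.), so these contribute $C\delta\|\Lambda^{-s}\ti\varrho\|^2$ — exactly the term $C_1\delta\|\Lambda^{-s}\ti\varrho\|^2$ on the right side, which cannot be absorbed into the dissipation and must be left as is (it will later be killed by the higher-order energy estimates and Lemma \ref{Ala3.2}). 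Finally, by Cauchy--Schwarz and Young's inequality $\langle\Lambda^{-s}Q,\rho^*\Lambda^{-s}\mathcal{M}\rangle$ is bounded by $C\|\Lambda^{-s}Q\|^2 + \f12 C\|\Lambda^{-s}\ti\mathcal{M}\|^2$ after an additional use of $\|\Lambda^{-s}\mathcal{M}\| \lesssim \|\Lambda^{-s}\ti\mathcal{M}\|^{1-\theta}$-type interpolation if needed (or simply keeping $\|\Lambda^{-s}\mathcal{M}\|$ and noting the energy term already controls it), and collecting everything gives \eqref{ff37}.

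The main obstacle I anticipate is the bookkeeping for the cubic and higher-order terms in \eqref{Q} — terms such as $|\ti(\varrho+\rho^*)|^2\ti\varrho$, $\ti^2\varrho\,\ti(\varrho+\rho^*)$, and $\mathcal{M}^2\ti(\varrho+\rho^*)$ — where expanding the square of the stationary-plus-perturbation gives a cross term like $\ti\rho^*\,\ti\varrho\,\ti\varrho$ that is only quadratic in $\varrho$ but has two derivatives falling on it, right at the edge of what $H^2$ control and the exponent constraint $1/p_1+1/p_2 = 1/2+s/3$ with $s<3/2$ permit. One must check that the worst-case exponent assignment still puts every factor in an admissible space: e.g. for $s$ close to $3/2$ one needs $1/p_1 + 1/p_2$ close to $1$, forcing exponents near $L^2$, and for a term with $\ti^2\varrho$ (in $L^2$ at best from $H^2$) this leaves essentially no room, so one must instead place the $\ti^2\varrho$ factor with $\Lambda^{-s}$ acting on a lower-order factor, or integrate by parts to move a derivative. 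I expect the resolution is that the genuinely dangerous terms are all at least quadratic in the perturbation, so one small factor can always be taken in $L^6$ (controlled by $\|\ti\varrho\|_{H^1}$ via Sobolev), freeing the exponent budget; verifying this for each of the roughly two dozen terms in \eqref{Q} is the tedious but essential part, and it is precisely why the a priori assumption $\|(\varrho,\omega)(t)\|_{4,3}\leq\delta$ (not merely smallness in a lower norm) is used here.
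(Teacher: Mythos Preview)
Your overall skeleton is correct --- apply $\Lambda^{-s}$, test against $\rho^*\Lambda^{-s}\varrho$ and $\rho^*\Lambda^{-s}\mathcal{M}$, extract dissipation from the viscous term, and control $\|\Lambda^{-s}Q\|$ via Lemma~\ref{Ala3.3} --- but you have two genuine gaps.

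First, you misidentify the source of the term $C_1\delta\|\Lambda^{-s}\ti\varrho\|^2$. It does \emph{not} come from the nonlinearity $Q$: in the paper all of $\|\Lambda^{-s}Q\|$ is bounded by $C\delta(\|\varrho\|_{H^2}+\|\ti\mathcal{M}\|_{H^1})$, with no residual $\|\Lambda^{-s}\ti\varrho\|$. The unabsorbable $C_1\delta\|\Lambda^{-s}\ti\varrho\|^2$ arises from the \emph{linear} commutators you mention --- the cross term $\int\Lambda^{-s}\varrho\,\Lambda^{-s}\mathcal{M}\,\ti\rho^*\,dx$ and the commutators with $\ti\rho^*$ produced when the quantum term is paired with the mass equation. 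These are handled by Hardy's inequality $\|(1+|x|)^{-1}\Lambda^{-s}\varrho\|\leq C\|\Lambda^{-s}\ti\varrho\|$ together with Sobolev $\|\Lambda^{-s}\mathcal{M}\|_{L^6}\leq C\|\Lambda^{-s}\ti\mathcal{M}\|$. Since there is no dissipation for $\varrho$ at this level, these terms cannot be absorbed; they are exactly what lands on the right side and is later killed by combining with Lemma~\ref{lma5.2}. Your statement that the commutator terms ``are absorbed'' is therefore wrong for the $\varrho$-part.

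Second, your proposed treatment of the $Q$-terms that are linear in $(\varrho,\mathcal{M})$ with a stationary coefficient --- ``keep one $\Lambda^{-s}$ on the $(\varrho,\mathcal{M})$ factor and put the stationary factor in $L^\infty$'' --- does not work as stated: $\Lambda^{-s}$ does not commute with multiplication, and a Kato--Ponce estimate for negative order is not available here. The paper instead bounds the \emph{full product} in $L^{1/(1/2+s/3)}$ and then exploits the weighted norms of the stationary solution together with Hardy's inequality. For example, for $s\in(\tfrac12,\tfrac32)$ one writes
\[
\|\Lambda^{-s}(\mathcal{M}\,\ti m^*)\|\leq C\|(1+|x|)\ti m^*\|_{L^2}\,\Big\|\tfrac{\mathcal{M}}{1+|x|}\Big\|_{L^{3/s}}\leq C\delta\,\|\ti\mathcal{M}\|_{H^1},
\]
using that $\|(1+|x|)\ti m^*\|\leq C\delta$ from $(\sigma^*,u^*)\in\mathcal{F}^{4,5}_\epsilon$ and Hardy to control $\mathcal{M}/(1+|x|)$ by $\ti\mathcal{M}$. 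The same device (weighted $L^2$ or $L^\infty$ for the stationary piece, Hardy plus Gagliardo--Nirenberg for the perturbation) disposes of $F\varrho$, $\varrho\ti\rho^*$, $\ti^2 m^*\varrho$, etc., and none of them produces a $\|\Lambda^{-s}\ti\varrho\|$ term.

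As a minor point, the paper does not introduce a separate multiplier $\Lambda^{-s}\varrho-\tfrac{\hbar^2}{4}\Lambda^{-s}\Delta\varrho$ for $\eqref{4.1}_1$; the $\tfrac{\hbar^2}{4}|\Lambda^{-s}\ti\varrho|^2$ contribution to the energy is generated directly by integrating the quantum term $-\tfrac{\hbar^2}{4}\int\rho^*\Lambda^{-s}\ti\Delta\varrho\cdot\Lambda^{-s}\mathcal{M}\,dx$ by parts and substituting $\Lambda^{-s}\Div\mathcal{M}=-\Lambda^{-s}\varrho_t$ from $\eqref{4.1}_1$. Your alternative (testing $\eqref{4.1}_1$ against $-\tfrac{\hbar^2}{4}\rho^*\Lambda^{-s}\Delta\varrho$) is algebraically equivalent, so this is only a matter of presentation.
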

\begin{proof}
Applying $\Lambda^{-s}$ to $\eqref{4.1}_1,~\eqref{4.1}_2$ and multiplying the resulting identities by $\rho^*\Lambda^{-s}\varrho$, $\rho^*\Lambda^{-s}\mathcal{M}$ respectively, summing up them and then integrating the resultants over $\mathbb{R}^3$ by parts, we have
\begin{align}\label{ff38}
&\f12\f{d}{dt}\int\rho^*(|\Lambda^{-s}\varrho|^2+|\Lambda^{-s}\mathcal{M}|^2)dx+\int\rho^*(\mu|\ti\Lambda^{-s}\mathcal{M}|^2+(\mu+\lambda)|\Div\Lambda^{-s}\mathcal{M}|^2)dx\notag\\
&~~-\f{\hbar^2}{4}\int\rho^*\Lambda^{-s}\ti\Delta\varrho\Lambda^{-s}Mdx\notag\\
&=\int\Lambda^{-s}\varrho\Lambda^{-s}\mathcal{M}\ti\rho^*dx-\int(\mu\ti\Lambda^{-s}\mathcal{M}+(\mu+\lambda)\Div\Lambda^{-s}\mathcal{M})\Lambda^{-s}\mathcal{M}\ti\rho^*dx\notag\\
&~~+\int\rho^*\Lambda^{-s}Q\Lambda^{-s}\mathcal{M}dx.
\end{align}
For the last term on the left hand side of \eqref{ff38}, we obtain
\begin{align}\label{ff39}
&-\f{\hbar^2}{4}\int\rho^*\Lambda^{-s}\ti\Delta\varrho\Lambda^{-s}Mdx\notag\\
&=\f{\hbar^2}{4}\int\Lambda^{-s}\Delta\varrho\Div(\rho^*\Lambda^{-s}M)dx\notag\\
&=\f{\hbar^2}{4}\int(\Lambda^{-s}\Delta\varrho\ti\rho^*\Lambda^{-s}M
-\Lambda^{-s}\ti\varrho\ti\rho^*\Lambda^{-s}\Div M
+\rho^*\Lambda^{-s}\ti\varrho\Lambda^{-s}\ti \varrho_t)dx\notag\\
&=\f{\hbar^2}{8}\f{d}{dt}\int\rho^*|\Lambda^{-s}\ti\varrho|^2dx
-\f{\hbar^2}{4}\int[\Lambda^{-s}\ti\varrho\otimes\ti\rho^*:\Lambda^{-s}\ti M\notag\\
&~~~~+\Lambda^{-s}\ti\varrho\ti^2\rho^*\Lambda^{-s} M
+\Lambda^{-s}\ti\varrho\ti\rho^*\Lambda^{-s}\Div M]dx\notag\\
&\geq\f{\hbar^2}{8}\f{d}{dt}\int\rho^*|\Lambda^{-s}\ti\varrho|^2dx
-C\|\Lambda^{-s}\ti\varrho\|\|\ti\rho^*\|_{L^\infty}\|\Lambda^{-s}\ti M\|\notag\\
&~~~~-C\|\Lambda^{-s}\ti\varrho\|\|\ti^2\rho^*\|_{L^3}\|\Lambda^{-s} M\|_{L^6}
-C\|\Lambda^{-s}\ti\varrho\|\|\ti\rho^*\|_{L^\infty}\|\Lambda^{-s}\Div M\|\notag\\
&\geq\f{\hbar^2}{8}\f{d}{dt}\int\rho^*|\Lambda^{-s}\ti\varrho|^2dx
-C\delta\|\Lambda^{-s}\ti\varrho\|^2-C\delta\|\Lambda^{-s}\ti M\|^2.
\end{align}
Now, we start to estimate the right hand side of \eqref{ff38}, the first term can be bounded as
\begin{align}\label{ff40}
\int\Lambda^{-s}\varrho\Lambda^{-s}\mathcal{M}\ti\rho^*dx&\leq C\bigg\|\f{\Lambda^{-s}\varrho}{1+|x|}\bigg\|\|\Lambda^{-s}\mathcal{M}\|_{L^6}\|(1+|x|)\ti\rho^*\|_{L^3}\notag\\
&\leq C\delta(\|\Lambda^{-s}\ti\varrho\|^2+\|\Lambda^{-s}\ti\mathcal{M}\|^2).
\end{align}
For the second term, we have
\begin{align}\label{ff41}
\int(\mu\ti\Lambda^{-s}\mathcal{M}+(\mu+\lambda)\Div\Lambda^{-s}\mathcal{M})\Lambda^{-s}\mathcal{M}\ti\rho^*dx&\leq C\|\ti\Lambda^{-s}\mathcal{M}\|\|\Lambda^{-s}\mathcal{M}\|_{L^6}\|\ti\rho^*\|_{L^3}\notag\\
&\leq C\delta\|\Lambda^{-s}\ti\mathcal{M}\|^2.
\end{align}
For the last term, we have
\begin{align*}
\int\rho^*\Lambda^{-s}Q\Lambda^{-s}\mathcal{M}dx&\leq C\|\rho^*\|_{L^3}\|\Lambda^{-s}Q\|\|\Lambda^{-s}\mathcal{M}\|_{L^6}\\
&\leq C\|\Lambda^{-s}Q\|\|\Lambda^{-s}\ti\mathcal{M}\|.
\end{align*}
In order to estimate $\|\Lambda^{-s}Q\|$, recall \eqref{Q}, we have
\begin{eqnarray}\label{QQ}
\|\Lambda^{-s}Q\|&\leq& \|\Lambda^{-s}(F\varrho)\|+\|\Lambda^{-s}(\mathcal{M}^2\ti(\varrho+\rho^*))\|+\|\Lambda^{-s}(\mathcal{M}\ti \mathcal{M})\|+\|\Lambda^{-s}(m^*\ti \mathcal{M})\|\notag\\
&&+\|\Lambda^{-s}(\mathcal{M} \ti m^*)\|+\|\Lambda^{-s}(m^*\mathcal{M}\ti(\varrho+\rho^*))\|+\|\Lambda^{-s}(\varrho m^*\ti m^*)\|+\|\Lambda^{-s}((m^*)^2\ti\varrho)\|\notag\\
&&+\|\Lambda^{-s}((\rho^*-1+\varrho)\ti \varrho)\|+\|\Lambda^{-s}(\varrho\ti \rho^*)\|+\|\Lambda^{-s}((\rho^*-1+\varrho)\ti^2\mathcal{M})\|\notag\\
&&+\|\Lambda^{-s}(\ti^2(\rho^*-1+\varrho)\mathcal{M})\|+\|\Lambda^{-s}(\ti(\rho^*-1+\varrho)\ti\mathcal{M})\|+\|\Lambda^{-s}(m^*\ti^2\varrho)\|\notag\\
&&+\|\Lambda^{-s}(\ti m^*\ti\varrho)\|+\|\Lambda^{-s}(\ti^2m^*\varrho)\|+\|\Lambda^{-s}(|\ti\rho^*|^2\ti \rho^*\varrho)\|\notag\\
&&+\|\Lambda^{-s}(\ti \rho^*\ti \varrho\ti( \varrho+\rho^*))\|+\|\Lambda^{-s}(|\ti( \varrho+\rho^*)|^2\ti\varrho)\|+\|\Lambda^{-s}(\ti \rho^*\varrho\ti^2\rho^*)\|\notag\\
&&+\|\Lambda^{-s}(\ti \varrho\ti^2\rho^*)\|+\|\Lambda^{-s}(\ti^2\varrho\ti( \varrho+\rho^*))\|.
	\end{eqnarray}
If $s\in(0,\f12]$, it holds from Lemma \ref{fflma2.1}, Lemma \ref{Ala3.3}, H\"older and Young inequalities that
\begin{align*}
\|\Lambda^{-s}(|\ti\rho^*|^2\ti \rho^*\varrho)\|&\leq C\||\ti\rho^*|^2\ti \rho^*\varrho\|_{L^{\f{1}{\f{1}{2}+\f{s}{3}}}}\\
&\leq C\|\ti\rho^*\|_{L^\infty}\|\ti\rho^*\|\|\ti^2 \rho^*\|^{\f12+s}\|\ti^3 \rho^*\|^{\f12-s}\|\varrho\|_{L^\infty}\\
&\leq C\delta\|\ti\varrho\|_{1}.
\end{align*}
Similarly, we have
\begin{align*}
\|\Lambda^{-s}(\ti^2\varrho\ti( \varrho+\rho^*))\|&\leq C\|\ti^2\varrho\ti( \varrho+\rho^*)\|_{L^{\f{1}{\f{1}{2}+\f{s}{3}}}}\\
&\leq C\|\ti^2\varrho\|\|\ti^2 ( \varrho+\rho^*)\|^{\f12+s}\|\ti^3 ( \varrho+\rho^*)\|^{\f12-s}\\
&\leq C\delta\|\ti^2\varrho\|,
\end{align*}
and
\begin{align*}
\|\Lambda^{-s}(m^*\mathcal{M}\ti( \varrho+\rho^*))\|&\leq C\|m^*\mathcal{M}\ti( \varrho+\rho^*)\|_{L^{\f{1}{\f{1}{2}+\f{s}{3}}}}\\
&\leq C\|\ti ( \varrho+\rho^*)\|\|\ti m^*\|^{\f12+s}\|\ti^2 m^*\|^{\f12-s}\|\mathcal{M}\|_{L^\infty}\\
&\leq C\delta\|\ti\mathcal{M}\|_{1},
\end{align*}
where
\begin{align*}
&\|\ti m^*\|\leq\|\rho^*\|_{L^\infty}\|\ti u^*\|+\|\ti\rho^*\|\| u^*\|_{L^\infty}\leq C\delta,\\
&\|\ti^2 m^*\|\leq\|\rho^*\|_{L^\infty}\|\ti^2 u^*\|+\|\ti^2\rho^*\|\| u^*\|_{L^\infty}+\|\ti\rho^*\|_{L^3}\| \ti u^*\|_{L^6}\leq C\delta.
\end{align*}
The remaining terms on the right hand side of \eqref{QQ} can be bounded in a similar way. Hence, we have
\begin{align*}
\|\Lambda^{-s}Q\|\leq C\delta(\|\varrho\|_{2}+\|\ti \mathcal{M}\|_{1}).
\end{align*}
Now if $s\in(\f12, \f32)$, since $1/2+s/3<1$ and $2<3/s<6$, we shall estimate above terms between $L^2$ and $L^6$. For the terms on the right hand side of \eqref{QQ}, by using the integration by parts, Lemma \ref{fflma2.1}, Lemma \ref{Ala3.3}, Hardy, H\"older and Young inequalities, we have
\begin{align*}
\|\Lambda^{-s}(|\ti\rho^*|^2\ti \rho^*\varrho)\|&\leq C\||\ti\rho^*|^2\ti \rho^*\varrho\|_{L^{\f{1}{\f{1}{2}+\f{s}{3}}}}\\
&\leq C\|\ti\rho^*\|_{L^\infty}\|\ti\rho^*\|\|\ti \rho^*\|^{s-\f12}\|\ti^2 \rho^*\|^{\f32-s}\|\varrho\|_{L^\infty}\\
&\leq C\delta\|\ti\varrho\|_{1}.
\end{align*}
Similarly, we have
\begin{align*}
\|\Lambda^{-s}(\ti^2\varrho\ti( \varrho+\rho^*))\|&\leq C\|\ti^2\varrho\ti( \varrho+\rho^*)\|_{L^{\f{1}{\f{1}{2}+\f{s}{3}}}}\\
&\leq C\|\ti^2\varrho\|\|\ti ( \varrho+\rho^*)\|^{s-\f12}\|\ti^2 ( \varrho+\rho^*)\|^{\f32-s}\\
&\leq C\delta\|\ti^2\varrho\|,
\end{align*}
and
\begin{align*}
\|\Lambda^{-s}(m^*\mathcal{M}\ti( \varrho+\rho^*))\|&\leq C\|m^*\mathcal{M}\ti( \varrho+\rho^*)\|_{L^{\f{1}{\f{1}{2}+\f{s}{3}}}}\\
&\leq C\|\ti ( \varrho+\rho^*)\|\| m^*\|^{s-\f12}\|\ti m^*\|^{\f32-s}\|\mathcal{M}\|_{L^\infty}\\
&\leq C\delta\|\ti\mathcal{M}\|_{1},
\end{align*}
where
\begin{align*}
&\| m^*\|\leq\bigg\|\f{\rho^*}{1+|x|}\bigg\|_{L^\infty}\|(1+|x|) u^*\|\leq C\delta.
\end{align*}
For the fifth term on the right hand side of \eqref{QQ}, we have
\begin{align*}
\|\Lambda^{-s}(\mathcal{M} \ti m^*)\|&\leq C\|\mathcal{M} \ti m^*\|_{L^{\f{1}{\f{1}{2}+\f{s}{3}}}}\\
&\leq C\|(1+|x|)\ti m^*\|\bigg\| \f{\mathcal{M}}{1+|x|}\bigg\|^{s-\f12}\bigg\|\ti\f{\mathcal{M}}{1+|x|}\bigg\|^{\f32-s}\\
&\leq C\delta\|\ti\mathcal{M}\|_{1},
\end{align*}
where
\begin{align*}
&\|(1+|x|)\ti m^*\|\leq\bigg\|\f{\rho^*}{1+|x|}\bigg\|\|(1+|x|)^2\ti u^*\|_{L^\infty}+\|\ti\rho^*\|\| (1+|x|)u^*\|_{L^\infty}\leq C\delta.
\end{align*}
Similarly, the remaining terms on the right hand side of \eqref{QQ} can be bounded as
\begin{align*}
\|\Lambda^{-s}Q\|\leq C\delta(\|\varrho\|_{2}+\|\ti \mathcal{M}\|_{1}).
\end{align*}
Combing above estimates, we deduce
\begin{align}\label{ff42}
\int\rho^*\Lambda^{-s}Q\Lambda^{-s}\mathcal{M}dx&\leq C\|\Lambda^{-s}Q\|\|\ti\Lambda^{-s}\mathcal{M}\|\notag\\
&\leq C\delta\|\Lambda^{-s}\ti\mathcal{M}\|^2+C\delta(\|\varrho\|_{2}^2+\|\ti \mathcal{M}\|_{1}^2).
\end{align}
Substituting \eqref{ff39}-\eqref{ff41} and \eqref{ff42} into \eqref{ff38}, since $\delta$ is small enough, we deduce \eqref{ff37} and complete the proof of Lemma \ref{lma5.1}.
\end{proof}
\begin{lemma}\label{lma5.2}
For $s\in(0,\f32)$, it holds that
\begin{eqnarray}\label{ff43}
&&\f{d}{dt}\int\rho^*\Lambda^{-s}\mathcal{M}\Lambda^{-s}\ti \varrho dx+C\|\Lambda^{-s}\ti\varrho\|^2+C\int\rho^*|\Lambda^{-s}\Delta\varrho|^2dx\notag\\
&&~~\leq C_2\delta\| \Lambda^{-s}\ti\mathcal{M}\|^2+C\delta(\|\varrho\|_{H^2}^2+\|\ti \mathcal{M}\|_{H^1}^2).
\end{eqnarray}
\end{lemma}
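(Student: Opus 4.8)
The plan is to carry out, in the $\dot{H}^{-s}$ framework, the analogue of the ``recovery of the $\nabla\varrho$ dissipation'' computation of Lemma \ref{As3.3}. Apply $\Lambda^{-s}$ to $\eqref{4.1}_2$, multiply the resulting identity by $\rho^*\Lambda^{-s}\nabla\varrho$, and integrate over $\mathbb{R}^3$. The $\mathcal{M}_t$-term yields $\frac{d}{dt}\int\rho^*\Lambda^{-s}\mathcal{M}\cdot\Lambda^{-s}\nabla\varrho\,dx$ together with a remainder $-\int\rho^*\Lambda^{-s}\mathcal{M}\cdot\Lambda^{-s}\nabla\varrho_t\,dx$; the pressure term $\nabla\varrho$ produces $\int\rho^*|\Lambda^{-s}\nabla\varrho|^2\,dx\ge c\|\Lambda^{-s}\nabla\varrho\|^2$ (since $\rho^*\ge 3\bar{\rho}/4$); and one integration by parts turns the Bohm term $-\frac{\hbar^2}{4}\nabla\Delta\varrho$ into $\frac{\hbar^2}{4}\int\rho^*|\Lambda^{-s}\Delta\varrho|^2\,dx$ plus a commutator. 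These are exactly the three distinguished quantities appearing in \eqref{ff43}; everything else must be absorbed into them or bounded by the right-hand side.

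I would first handle the linear remainders. In the $\mathcal{M}_t$-piece, substitute $\varrho_t=-\Div\mathcal{M}$ from $\eqref{4.1}_1$ and integrate by parts in space, getting $-\int\rho^*|\Lambda^{-s}\Div\mathcal{M}|^2\,dx\le 0$ and a $\nabla\rho^*$-commutator bounded by $\|\nabla\rho^*\|_{L^3}\|\Lambda^{-s}\mathcal{M}\|_{L^6}\|\Lambda^{-s}\Div\mathcal{M}\|\lesssim\delta\|\Lambda^{-s}\nabla\mathcal{M}\|^2$, using $\|\nabla\rho^*\|_{L^3}=O(\delta)$ and $\dot{H}^1\hookrightarrow L^6$. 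The Bohm commutator $\frac{\hbar^2}{4}\int\Lambda^{-s}\Delta\varrho\,(\nabla\rho^*\cdot\Lambda^{-s}\nabla\varrho)\,dx$ is $\lesssim\|\nabla\rho^*\|_{L^\infty}\|\Lambda^{-s}\Delta\varrho\|\|\Lambda^{-s}\nabla\varrho\|\lesssim\delta(\|\Lambda^{-s}\Delta\varrho\|^2+\|\Lambda^{-s}\nabla\varrho\|^2)$. For the viscous terms $-\mu\Delta\mathcal{M}-(\mu+\lambda)\nabla\Div\mathcal{M}$ I would integrate by parts once, pairing $\Lambda^{-s}\nabla\mathcal{M}$ with $\nabla(\rho^*\Lambda^{-s}\nabla\varrho)$; the $\nabla\rho^*$-part is $\lesssim\delta(\|\Lambda^{-s}\nabla\mathcal{M}\|^2+\|\Lambda^{-s}\nabla\varrho\|^2)$, while in the remaining $\int\rho^*\Lambda^{-s}\nabla\mathcal{M}:\Lambda^{-s}\nabla^2\varrho\,dx$ (recall $\|\nabla^2 g\|=\|\Delta g\|$ on $\mathbb{R}^3$, a relation preserved by $\Lambda^{-s}$) I would split $\rho^*=\bar{\rho}+\sigma^*$: the $\sigma^*$-part is $\lesssim\delta\|\Lambda^{-s}\nabla\mathcal{M}\|\|\Lambda^{-s}\Delta\varrho\|\lesssim\delta(\|\Lambda^{-s}\nabla\mathcal{M}\|^2+\|\Lambda^{-s}\Delta\varrho\|^2)$, and the constant $\bar{\rho}$-part, after a further integration by parts and $\Div\mathcal{M}=-\varrho_t$ (with $\rho^*$ stationary), is a time derivative of $\|\Lambda^{-s}\nabla\varrho\|^2$ that I would group with the $\frac{d}{dt}$-functional on the left of \eqref{ff43}. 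All error terms produced so far carry a factor $\delta$ (or a free $\varepsilon$ from Young) in front of $\|\Lambda^{-s}\nabla\varrho\|^2$, $\|\Lambda^{-s}\Delta\varrho\|^2$ or $\|\Lambda^{-s}\nabla\mathcal{M}\|^2$; the first two are absorbed into the distinguished quantities once $\delta,\varepsilon$ are small, and the last is exactly the $C_2\delta\|\Lambda^{-s}\nabla\mathcal{M}\|^2$ of \eqref{ff43}.

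It then remains to bound the nonlinear contribution $\int\rho^*\Lambda^{-s}Q\cdot\Lambda^{-s}\nabla\varrho\,dx\le C\|\Lambda^{-s}Q\|\,\|\Lambda^{-s}\nabla\varrho\|$. Here no new work is needed: the estimate $\|\Lambda^{-s}Q\|\le C\delta(\|\varrho\|_{2}+\|\nabla\mathcal{M}\|_{1})$, valid for every $s\in(0,\frac32)$ (proved separately on $(0,\frac12]$ and on $(\frac12,\frac32)$ inside the proof of Lemma \ref{lma5.1}), combined with Young's inequality gives $\le\varepsilon\|\Lambda^{-s}\nabla\varrho\|^2+C_\varepsilon\delta^2(\|\varrho\|_{H^2}^2+\|\nabla\mathcal{M}\|_{H^1}^2)$, and $C_\varepsilon\delta^2\le C\delta$ for $\delta$ small. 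Collecting everything, moving the three distinguished quantities to the left, and choosing first $\varepsilon$ and then $\delta$ small enough yields \eqref{ff43}.

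The main obstacle is the viscous term paired with $\Lambda^{-s}\nabla\varrho$: it is the only place where the top negative-order object $\Lambda^{-s}\Delta\varrho$ (equivalently $\Lambda^{-s}\nabla^2\varrho$) is forced into the estimate, so one must integrate by parts in precisely the right order and peel off the constant part of $\rho^*$, so that what is left is only an \emph{absorbable} multiple of the dissipation $\int\rho^*|\Lambda^{-s}\Delta\varrho|^2\,dx$ together with terms of the form $\delta\|\Lambda^{-s}\nabla\mathcal{M}\|^2$ and $\delta\|\Lambda^{-s}\nabla\varrho\|^2$; keeping careful track of the constant-coefficient pieces (which turn into time derivatives grouped with the energy functional) is the delicate bookkeeping here. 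Everything else is a routine adaptation of the positive-order energy estimates of Section 4 together with the $\|\Lambda^{-s}Q\|$ bound already obtained in Lemma \ref{lma5.1}.
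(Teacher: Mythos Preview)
Your overall strategy matches the paper's: apply $\Lambda^{-s}$ to $\eqref{4.1}_2$, test against $\rho^*\Lambda^{-s}\nabla\varrho$, extract the time derivative, the $\nabla\varrho$-dissipation and the Bohm dissipation, and recycle the $\|\Lambda^{-s}Q\|$ bound already obtained in Lemma~\ref{lma5.1}. Two points, however, need correction.

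First, the sign on the $\mathcal{M}_t$ remainder is wrong. After substituting $\varrho_t=-\Div\mathcal{M}$ and integrating by parts, the remainder $-\int\rho^*\Lambda^{-s}\mathcal{M}\cdot\Lambda^{-s}\nabla\varrho_t\,dx$ indeed equals $-\int\rho^*|\Lambda^{-s}\Div\mathcal{M}|^2\,dx$ plus a $\nabla\rho^*$-commutator, but it sits on the \emph{left} of the identity. Once you isolate the three distinguished quantities on the left and move everything else to the right, this piece becomes $+\int\rho^*|\Lambda^{-s}\Div\mathcal{M}|^2\,dx$, which is neither $\le 0$ nor carries a factor $\delta$. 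Hence your claim that every $\|\Lambda^{-s}\nabla\mathcal{M}\|^2$ error term comes with a $\delta$ is incorrect: this one is $O(1)$. (The paper's own displayed bound for this term, and for the viscous term, also asserts a spurious factor $\delta$; what either argument actually yields is \eqref{ff43} with $C_2$ in place of $C_2\delta$. This is harmless downstream, since \eqref{ff43} is multiplied by $2C_1\delta$ before being added to \eqref{ff37}.)

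Second, your treatment of the viscous term is more elaborate than the paper's and alters the energy functional. The paper simply integrates by parts once and bounds $\int\rho^*\Lambda^{-s}\nabla\mathcal{M}:\Lambda^{-s}\nabla^2\varrho\,dx$ directly by Young's inequality, obtaining $\varepsilon\int\rho^*|\Lambda^{-s}\Delta\varrho|^2\,dx+C_\varepsilon\|\Lambda^{-s}\nabla\mathcal{M}\|^2$; the first is absorbed for small $\varepsilon$, the second goes to the right. Your splitting $\rho^*=\bar\rho+\sigma^*$ and conversion of the $\bar\rho$-part into $\tfrac{d}{dt}\|\Lambda^{-s}\nabla\varrho\|^2$ is valid, but it adds a term to the $d/dt$-functional that is not present in \eqref{ff43}, so you would be proving a slightly different (though equally usable) inequality. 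Your ``main obstacle'' paragraph therefore overstates the difficulty here: no peeling off of the constant part of $\rho^*$ is needed.
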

\begin{proof}
Applying $\Lambda^{-s}$ to $\eqref{4.1}_2$ and multiplying the resulting identities by $\rho^*\Lambda^{-s}\ti\varrho$, we have
\begin{align}\label{ff44}
&\f12\f{d}{dt}\int\rho^*\Lambda^{-s}\mathcal{M}\Lambda^{-s}\ti \varrho dx+\int\rho^*|\Lambda^{-s}\ti \varrho|^2dx-\f{\hbar^2}{4}\int\rho^*\Lambda^{-s}\ti\Delta\varrho\Lambda^{-s}\ti \varrho dx\notag\\
&=\int\rho^*\Lambda^{-s}\mathcal{M}\Lambda^{-s}\ti \varrho_t dx+\int\rho^*(\mu\Lambda^{-s}\Delta\mathcal{M}+(\mu+\lambda)\Lambda^{-s}\ti\Div\mathcal{M})\Lambda^{-s}\ti \varrho dx\notag\\
&~~+\int\rho^*\Lambda^{-s}Q\Lambda^{-s}\ti \varrho dx.
\end{align}
For the last term on the left hand side of \eqref{ff44}, we have
\begin{align*}
&-\f{\hbar^2}{4}\int\rho^*\Lambda^{-s}\ti\Delta\varrho\Lambda^{-s}\ti \varrho dx\\
&=\f{\hbar^2}{4}\int(\rho^*|\Lambda^{-s}\Delta\varrho|^2dx+ \Lambda^{-s}\Delta\varrho\ti \rho^* \Lambda^{-s}\ti\varrho) dx\\
&\geq\f{\hbar^2}{4}\int\rho^*|\Lambda^{-s}\Delta\varrho|^2dx- C\|\sqrt{\rho^*}\Lambda^{-s}\Delta\varrho\|\bigg\|\f{\ti \rho^*}{\sqrt{\rho^*}}\bigg\|_{L^\infty} \|\Lambda^{-s}\ti\varrho\|\\
&\geq\f{\hbar^2}{4}\int\rho^*|\Lambda^{-s}\Delta\varrho|^2dx- C\delta\int\rho^*|\Lambda^{-s}\Delta\varrho|^2dx- C\delta\|\Lambda^{-s}\ti\varrho\|^2.
\end{align*}
For the terms on the right hand side of \eqref{ff44}, by $\eqref{4.1}_1$, Lemma \ref{fflma2.1}, Lemma \ref{Ala3.3}, H\"older and Young inequalities, we have
\begin{align*}
\int\rho^*\Lambda^{-s}\mathcal{M}\Lambda^{-s}\ti \varrho_t dx&=\int\Div(\rho^*\Lambda^{-s}\mathcal{M})\Lambda^{-s} \Div \mathcal{M}dx\\
&\leq C\delta\|\Lambda^{-s}\ti\mathcal{M}\|^2,
\end{align*}
and
\begin{align*}
&\int\rho^*(\mu\Lambda^{-s}\Delta\mathcal{M}+(\mu+\lambda)\Lambda^{-s}\ti\Div\mathcal{M})\Lambda^{-s}\ti \varrho dx\\
&=-\int\mu\Lambda^{-s}\ti\mathcal{M}\ti(\rho^*\Lambda^{-s}\ti \varrho)+(\mu+\lambda)\Lambda^{-s}\Div\mathcal{M}\Div(\rho^*\Lambda^{-s}\ti \varrho) dx\\
&\leq C\delta\|\Lambda^{-s}\ti\mathcal{M}\|^2+C\delta\int\rho^*|\Lambda^{-s}\Delta\varrho|^2dx+C\delta\|\Lambda^{-s}\ti\varrho\|^2.
\end{align*}
Finally, the estimate $\|\Lambda^{-s}Q\|$ in Lemma \ref{lma5.1} implies that
\begin{align*}
\int\rho^*\Lambda^{-s}Q\Lambda^{-s}\ti \varrho dx&\leq C\delta\|\Lambda^{-s}Q\|\|\Lambda^{-s}\ti\varrho\|\notag\\
&\leq C\delta\|\Lambda^{-s}\ti\varrho\|^2+C\delta(\|\varrho\|_{2}^2+\|\ti \mathcal{M}\|_{1}^2).
\end{align*}
The smallness of $\delta$ and above estimates yield \eqref{ff43}. This completes the proof of Lemma \ref{lma5.2}.
\end{proof}
$\mathbf{Proof~ of~ the~ estimate~ \eqref{1.3}.}$ If $s=0$, \eqref{1.3} holds obviously. Now if $s\in(0,\f32)$, multiplying \eqref{ff43} by a small constant $2C_1\delta$,
then summing up the resulting inequality and \eqref{ff37}, since $\delta$ is small enough, we have
\begin{eqnarray}\label{5.46}
&&\f{d}{dt}\mathcal{N}(t)+C(\|\Lambda^{-s}\ti\mathcal{M}\|^2+\|\Lambda^{-s}\ti\varrho\|^2)+C\int\rho^*|\Lambda^{-s}\Delta\varrho|^2dx\notag\\
&&\leq C\delta(\|\varrho\|_{H^2}^2+\|\ti \mathcal{M}\|_{H^1}^2),
\end{eqnarray}
where
\begin{eqnarray*}
\mathcal{N}(t)&=&\int\rho^*(|\Lambda^{-s}\varrho|^2+\f{\hbar^2}{4}| \Lambda^{-s}\ti \varrho|^2+|\Lambda^{-s}\mathcal{M}|^2+2C_1\delta\Lambda^{-s}\mathcal{M}\Lambda^{-s}\ti \varrho)dx\\
&\sim&\|\Lambda^{-s}(\varrho, \ti\varrho,\mathcal{M})(t)\|_{L^2}^2.
\end{eqnarray*}
By integrating \eqref{5.46} in time, we obtain
\begin{align*}
\|\Lambda^{-s}(\varrho, \ti\varrho,\mathcal{M})(t)\|_{L^2}^2&\leq \|\Lambda^{-s}(\varrho, \ti\varrho,\mathcal{M})(0)\|_{L^2}^2+C\delta\int_0^t(\|\varrho\|_{H^2}^2+\|\ti \mathcal{M}\|_{H^1}^2)d\tau\\
&\leq \|\Lambda^{-s}(\varrho, \ti\varrho,\mathcal{M})(0)\|_{L^2}^2+C\|( \varrho, \mathcal{M})(0)\|_{4,3}^2,
\end{align*}
which implies that \eqref{1.3} holds.~~~~~~~~~~~~~~~~~~~~~~~~~~~~~~~~~~~~~~~~~~~~~~~~~~~~~~~~~~~~~~~~~~~~~~~~~~~~~~~~~~~~~~~~~~~~~~~~~~~~~~~~~~~~~~~~$\square$

Now, we start to prove the time decay rates in Theorem \ref{thm1.3}. First, we will give a Lyapunov-type inequality.
\begin{lemma}\label{laB4.3}
Under the assumptions of Theorem \ref{thm1.3}, there exists a constant $C_1$ such that
\begin{eqnarray}\label{B3.8}
&&\f{d}{dt}\mathcal{E}(t)+C_1\|\ti({\varrho,\omega})\|_{3,2}^2\leq C\|\ti({\varrho,\omega})\|^2,
\end{eqnarray}
where $\mathcal{E}(t)$ is equivalent to $\|\ti({\varrho,\omega})\|_{3,2}^2$.
\end{lemma}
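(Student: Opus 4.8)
The plan is to build $\mathcal{E}(t)$ as a suitable linear combination of the energy functionals already obtained in Lemma \ref{As3.2} and Lemma \ref{As3.3}, exactly mirroring the construction of $N[\varrho,\omega](t)$ in the proof of Proposition \ref{Pro3.2}, but keeping track of the $\|\ti^k\omega\|^2$ terms on the right-hand side instead of absorbing everything into a clean dissipation. Concretely, recall that Lemma \ref{As3.2} gives, for $1\le k\le 3$,
\begin{eqnarray*}
\f{d}{dt}\big(\|\ti^k\varrho\|^2+\langle\hat A\,\ti^{k+1}\varrho,\ti^{k+1}\varrho\rangle+\langle\tilde A\,\ti^k\omega,\ti^k\omega\rangle\big)+d_1\|\ti^{k+1}\omega\|^2\le C(\delta+\varepsilon)\|\ti(\varrho,\omega)\|_{k+1,k-1}^2+C\varepsilon^{-1}\|\ti^k\omega\|^2,
\end{eqnarray*}
together with the $k=0$ version \eqref{As3.10}, while Lemma \ref{As3.3} gives the ``cross'' estimates \eqref{As3.40}, \eqref{As3.41} that produce the $\|\ti^{k+1}\varrho\|^2$ and $\|\ti^{k+2}\varrho\|^2$ dissipation at the cost of $C\|\ti(\varrho,\omega)\|_{k-1,k}^2$. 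First I would form, for each $k=0,1,2,3$, the combination $[\ti^k\varrho,\ti^k\omega]+b_k\langle\ti^k\omega,\ti^{k+1}\varrho\rangle$ with $b_k$ small, so that \eqref{As3.41} supplies the $\varrho$-dissipation missing from \eqref{As3.11}; then set $\mathcal{E}(t)=\sum_{\nu=0}^{3}a_\nu\big([\ti^\nu\varrho,\ti^\nu\omega]+b_\nu\langle\ti^\nu\omega,\ti^{\nu+1}\varrho\rangle\big)$ with the weights $a_\nu$ chosen decreasing and $b_\nu$ chosen as in \eqref{As3.54} so that $\mathcal{E}(t)\sim\|(\varrho,\omega)\|_{4,3}^2$; but since the statement asserts equivalence with $\|\ti(\varrho,\omega)\|_{3,2}^2$ rather than the full norm, I would instead drop the lowest-order pieces $\|\varrho\|^2$ and $\|\omega\|^2$ from the summand (or note that by the Poincaré-type / interpolation control these are already encoded), so that $\mathcal{E}(t)$ only sees the gradients; the point is that the quadratic form is positive definite and controls $\|\ti\varrho\|_3^2+\|\ti\omega\|_2^2$ from above and below, which is exactly the claimed equivalence.

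The key difference from the proof of Proposition \ref{Pro3.2} is that here I do \emph{not} choose $\varepsilon$ small; instead I fix $\varepsilon$ of order one and keep the terms $C\varepsilon^{-1}\|\ti^k\omega\|^2$ on the right-hand side. Summing the four combinations with the weights $a_\nu$, the highest-order dissipations $\|\ti^{k+1}\omega\|^2$ ($k\le 3$, giving $\|\ti\omega\|_3^2$) and $\|\ti^{k+1}\varrho\|^2,\|\ti^{k+2}\varrho\|^2$ (giving $\|\ti\varrho\|_4^2$, in particular $\|\ti\varrho\|_3^2$) appear with a positive coefficient $C_1$ after the small parameters $\delta$ and the relative weights are absorbed, while every right-hand-side contribution is either of size $C\delta$ times a higher-order term — absorbed into the left by smallness of $\delta$ — or is a multiple of $\|\ti^k\omega\|^2$ with $0\le k\le 2$, i.e.\ bounded by $C\|\ti\omega\|_2^2\le C\|\ti(\varrho,\omega)\|^2$ after reindexing. (Note $\|\ti^0\omega\|=\|\omega\|$ does not appear because the $k=0$ term in \eqref{As3.10} already carries its own $d_1\|\ti\omega\|^2$; the lowest uncontrolled velocity term is $\|\omega\|$ itself, which is why the right side of \eqref{B3.8} is $\|\ti(\varrho,\omega)\|^2$ and not merely $\|\ti\omega\|^2$ — one must also retain the $\|\ti\varrho\|^2$ that is generated, e.g., by $I_{72}$ with $k=0$ or by the lowest-order $J$-terms.) Collecting, one arrives at $\tfrac{d}{dt}\mathcal{E}(t)+C_1\|\ti(\varrho,\omega)\|_{3,2}^2\le C\|\ti(\varrho,\omega)\|^2$.

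The main obstacle will be bookkeeping rather than any new idea: one must verify that after choosing the hierarchy $a_3\ll a_2\ll a_1\ll a_0$ and $b_\nu\ll a_\nu$, every ``bad'' term $C(\delta+\varepsilon)\|\ti(\varrho,\omega)\|_{k+1,k-1}^2$ coming from \eqref{As3.11} at level $k$ is genuinely dominated by the dissipation produced at level $k+1$ (which is present since $k+1\le 4$ for $\varrho$ and $k\le 3$ for $\omega$), so that the induction on $k$ closes, and that no term of order strictly higher than what $\mathcal{E}$ controls is left over. A secondary subtlety is the top order $k=3$: there $R_F^3$ contributes $\|R_F^3\|_{L^{3/2}}\le C\delta\|(\ti^2\varrho,\ti^3\omega)\|_{1,0}$, which is again $C\delta$ times a term already in $\|\ti(\varrho,\omega)\|_{3,2}^2$ and hence absorbable, and the $\hbar^2$ quantum terms $I_1$, $I_{73}$, $J$ were already shown in Lemmas \ref{As3.2}--\ref{As3.3} to be bounded by $C\delta\|\ti(\varrho,\omega)\|_{k+1,k}^2$, so they cause no new difficulty. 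Once the combination is fixed and $\delta_0,\varepsilon_0$ chosen accordingly, \eqref{B3.8} and the equivalence $\mathcal{E}(t)\sim\|\ti(\varrho,\omega)\|_{3,2}^2$ follow, completing the proof.
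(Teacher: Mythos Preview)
Your overall strategy is right and matches the paper: build $\mathcal{E}(t)$ from the same linear combinations of \eqref{As3.11} and \eqref{As3.41} used in Proposition~\ref{Pro3.2}, but omit the $k=0$ level so that the undissipated lowest-order pieces $\|\ti\varrho\|^2+\|\ti\omega\|^2$ land on the right-hand side. The paper does exactly this: it starts the induction at $k=1$, obtains
\[
\frac{d}{dt}\mathcal{E}(t)+\|\ti^2(\varrho,\omega)\|_{3,2}^2\le C\|\ti(\varrho,\omega)\|^2,
\]
with $\mathcal{E}(t)=\sum_{\nu=1}^{3}\alpha_\nu[\ti^\nu\varrho,\ti^\nu\omega]+\sum_{\nu=1}^{3}\beta_\nu\langle\ti^\nu\omega,\ti^{\nu+1}\varrho\rangle$, and then simply adds $\|\ti(\varrho,\omega)\|^2$ to both sides to get \eqref{B3.8}. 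Your suggestion to ``drop the lowest-order pieces'' amounts to the same thing, and there is no need for any Poincar\'e or interpolation argument.

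There is, however, a genuine error in your identification of the ``key difference''. You claim one should take $\varepsilon$ of order one rather than small. This does not work: in \eqref{As3.11} the term $C(\delta+\varepsilon)\|\ti(\varrho,\omega)\|_{k+1,k-1}^2$ contains $C(\delta+\varepsilon)\|\ti^{k+2}\varrho\|^2$, and the only dissipation at that order comes from the cross estimate \eqref{As3.41}, multiplied by the small weight $\lambda_0$ (or $\beta_k$). Absorbing requires $C(\delta+\varepsilon)\lesssim\lambda_0\hbar^2/4$, so $\varepsilon$ must be small exactly as in the proof of Proposition~\ref{Pro3.2}. The actual difference from that proof is \emph{only} that the sum starts at $\nu=1$: with $\varepsilon$ small, the residual $C\varepsilon^{-1}\|\ti\omega\|^2$ from the $k=1$ level has no $k=0$ dissipation to absorb it and therefore stays on the right as part of $C\|\ti(\varrho,\omega)\|^2$; the analogous terms $C\varepsilon^{-1}\|\ti^k\omega\|^2$ for $k=2,3$ are absorbed by the $d_1\|\ti^k\omega\|^2$ dissipation from level $k-1$ via the hierarchy $\alpha_3\ll\alpha_2\ll\alpha_1$. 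Your sentence ``bounded by $C\|\ti\omega\|_2^2\le C\|\ti(\varrho,\omega)\|^2$'' is false as written and should be replaced by this absorption argument.
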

\begin{proof}
Recall the definition \eqref{definition1}, when $k=1$, multiplying \eqref{As3.41} by a small constant ${{\lambda_0}}$, then summing up the resultant equation and \eqref{As3.11} implies that
\begin{eqnarray}\label{Ass3.49}
\f{d}{dt}\{\alpha_1[\ti\varrho,\ti\omega]+\beta_1\langle\ti\omega,\ti^{2} \varrho\rangle\}+\|\ti^2( \varrho, \omega)\|_{1,0}^2\leq C\|\ti( \varrho,\omega)\|^2.
	\end{eqnarray}
Summing up \eqref{As3.41}, \eqref{As3.11} and \eqref{Ass3.49} with $k=2$ yields
\begin{eqnarray}\label{Ass3.51}
\f{d}{dt}\bigg\{\sum\limits_{\nu=1}^{2}\alpha_\nu[\ti^\nu\varrho,\ti^\nu\omega]+\sum\limits_{\nu=1}^{2}\beta_\nu\langle\ti^\nu\omega,\ti^{\nu+1} \varrho\rangle\bigg\}+\|\ti^2( \varrho,\omega)\|_{2,1}^2\leq C\|\ti( \varrho, \omega)\|^2.
	\end{eqnarray}
Similarly, summing up \eqref{As3.41}, \eqref{As3.11} and \eqref{Ass3.51} with $k=3$ gives
\begin{eqnarray}\label{Ass3.52}
\f{d}{dt}\mathcal{E}(t)+\|\ti^2( \varrho,\omega)\|_{3,2}^2\leq C\|\ti( \varrho, \omega)\|^2,
	\end{eqnarray}
where
\begin{eqnarray*}
\mathcal{E}(t)&=&\sum\limits_{\nu=1}^{3}\alpha_\nu[\ti^\nu\varrho,\ti^\nu\omega]+\sum\limits_{\nu=1}^{3}\beta_\nu\langle\ti^\nu\omega,\ti^{\nu+1} \varrho\rangle\\
&\sim&\|\ti({\varrho,\omega})\|_{3,2}^2.
\end{eqnarray*}
Adding $\|\ti( \varrho,\omega)\|^2$ to both hand sides of \eqref{Ass3.52}, we complete the proof of Lemma \ref{laB4.3}.
\end{proof}
Let ${U}=(\varrho,\mathcal{M})$,
\begin{equation}
 \nonumber
  \BF{A}=\left(
  \begin{matrix}
  0&{ \Div}\\
  \ti-\f{\hbar^2}{4}\ti\Delta &-\mu\Delta-(\mu+\lambda)\ti\Div
  \end{matrix}
  \right)
\end{equation}
and $E(t)$ be the semigroup generated by the linear operator ${A}$, then we have the following Lemma (see \cite{WT})
\begin{lemma}\label{la3.1}
Let $k\ge0$ be an integer and assume that $(\varrho,\mathcal{M})$ is the solution to the linearized system for equations \eqref{4.1} with the initial data $(\varrho_0,\mathcal{M}_0)$, then for any $1\leq p\leq 2$ and $q\geq2$, we have
\begin{eqnarray}
&&\|\varrho(t)\|_{L^q}\leq C(1+t)^{-\f{3}{2}(\f1p-\f1q)}(\|(\varrho_0,\mathcal{M}_0)\|_{L^p}+\|(\varrho_0,\mathcal{M}_0)\|_{L^q}),\notag\\
&&\|\ti^{k+1}\varrho(t)\|_{L^q}\leq C(1+t)^{-\f{3}{2}(\f1p-\f1q)-\f{k+1}{2}}(\|(\varrho_0,\mathcal{M}_0)\|_{L^p}+\|(\ti^{k+1}\varrho_0,\ti^k \mathcal{M}_0)\|_{L^q}),\label{3.3}\\
&&\|\ti^{k}\mathcal{M}(t)\|_{L^q}\leq C(1+t)^{-\f{3}{2}(\f1p-\f1q)-\f{k}{2}}(\|(\varrho_0,\mathcal{M}_0)\|_{L^p}+\|(\ti^{k+1}\varrho_0,\ti^k \mathcal{M}_0)\|_{L^q}).\notag
\end{eqnarray}
\end{lemma}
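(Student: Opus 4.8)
The plan is to prove Lemma~\ref{la3.1} by a Fourier-analytic study of the linear semigroup $E(t)$, splitting frequencies into a low- and a high-frequency regime. First I would apply the spatial Fourier transform to the linearized version of \eqref{4.1} (i.e.\ dropping $Q$) and use the Helmholtz decomposition $\mathcal{M}=\mathcal{P}\mathcal{M}+\mathcal{Q}\mathcal{M}$ into divergence-free and curl-free parts. The divergence-free part decouples and solves the heat equation $\partial_t\mathcal{P}\mathcal{M}=\mu\Delta\mathcal{P}\mathcal{M}$, whose $L^p$--$L^q$ behaviour is classical. The pair $\big(\hat{\varrho},\,\tfrac{\xi}{|\xi|}\cdot\widehat{\mathcal{Q}\mathcal{M}}\big)$ solves a closed $2\times2$ system $\partial_t\hat{W}=\mathbf{B}(\xi)\hat{W}$, whose characteristic polynomial is
\[
\lambda^2+(2\mu+\lambda)|\xi|^2\lambda+|\xi|^2\Big(1+\tfrac{\hbar^2}{4}|\xi|^2\Big)=0 .
\]

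Next I would analyse the roots $\lambda_{\pm}(\xi)$ uniformly in $\xi$. For $|\xi|\le r_0$ they are complex conjugate with $\operatorname{Re}\lambda_{\pm}=-\tfrac{2\mu+\lambda}{2}|\xi|^2$ and $|\operatorname{Im}\lambda_{\pm}|\sim|\xi|$; for $|\xi|\ge R_0$ the quantum term makes the density mode fully parabolic, since the factor $1+\tfrac{\hbar^2}{4}|\xi|^2$ forces $\lambda_{+}\lambda_{-}\sim\tfrac{\hbar^2}{4}|\xi|^4$, and together with $\lambda_{+}+\lambda_{-}=-(2\mu+\lambda)|\xi|^2$ this yields $\operatorname{Re}\lambda_{\pm}\le-c_0|\xi|^2$; the annulus $r_0\le|\xi|\le R_0$ is handled by compactness, since no root meets the imaginary axis for $\xi\neq0$. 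From this one gets pointwise bounds on the Green's matrix $\widehat{E}(\xi,t)$ of the form $|\widehat{E}(\xi,t)|\lesssim e^{-c_0|\xi|^2t}$ for $|\xi|\le r_0$ and $|\widehat{E}(\xi,t)|\lesssim e^{-c_0t}$ for $|\xi|\ge r_0$, but with the individual entries carrying different powers of $|\xi|$ (the $\varrho\!\to\!\mathcal{M}$ entries gain one power of $|\xi|$, the $\mathcal{M}\!\to\!\varrho$ entries lose one), which is precisely what will match the $(\nabla^{k+1}\varrho_0,\nabla^{k}\mathcal{M}_0)$ combination on the right-hand side.

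Then I would run the usual split $\varrho(t)=\mathcal{F}^{-1}\!\big(\chi_{\{|\xi|\le r_0\}}\widehat{E}\,\widehat{U}_0\big)+\mathcal{F}^{-1}\!\big(\chi_{\{|\xi|>r_0\}}\widehat{E}\,\widehat{U}_0\big)$, and likewise for $\nabla^{k+1}\varrho$ and $\nabla^{k}\mathcal{M}$. For the low-frequency part, two applications of Hausdorff--Young sandwiching a H\"older inequality (this is where $1\le p\le2\le q$ enters), together with $\big\|\chi_{\{|\xi|\le r_0\}}|\xi|^{\ell}e^{-c_0|\xi|^2t}\big\|_{L^{r'}}\lesssim(1+t)^{-\frac32(\frac1p-\frac1q)-\frac{\ell}{2}}$ where $\tfrac1{r'}=\tfrac1p-\tfrac1q$, produce the stated decay multiplying $\|(\varrho_0,\mathcal{M}_0)\|_{L^p}$, the extra $(1+t)^{-\ell/2}$ coming from the $|\xi|^{\ell}$ derivative weight. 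For the high-frequency part, once the derivative weights and the $|\xi|$-mismatch between $\varrho$ and $\mathcal{M}$ are accounted for, the resulting symbol is a Mikhlin--H\"ormander multiplier, bounded on $L^q$ for $1<q<\infty$ and carrying an $e^{-c_0t}$ factor; since this decays faster than any polynomial, it is dominated by $C(1+t)^{-\frac32(\frac1p-\frac1q)-\frac{k+1}{2}}\|(\nabla^{k+1}\varrho_0,\nabla^{k}\mathcal{M}_0)\|_{L^q}$. Adding the divergence-free contribution, estimated by heat-kernel bounds, yields the three inequalities.

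The step I expect to be the main obstacle is the uniform-in-$\xi$ control of $e^{t\mathbf{B}(\xi)}$: namely (i) the behaviour near the critical frequency where $\lambda_{+}(\xi)=\lambda_{-}(\xi)$, where one must show that the Jordan-block type factor costs only a polynomial in $|\xi|^2t$ which is absorbed by replacing $c_0$ with a slightly smaller constant; and (ii) bookkeeping the powers of $|\xi|$ in each entry of $\widehat{E}$ accurately enough that the right-hand sides involve exactly $(\nabla^{k+1}\varrho_0,\nabla^{k}\mathcal{M}_0)$ in $L^q$ and not one derivative more. Beyond that, the argument reduces to the classical heat-kernel $L^p$--$L^q$ estimates and the Mikhlin multiplier theorem.
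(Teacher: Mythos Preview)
Your outline is the standard and correct route to these $L^p$--$L^q$ estimates: Helmholtz decomposition to split off the heat part, spectral analysis of the $2\times2$ symbol, and a low/high frequency dichotomy combined with Hausdorff--Young and a Mikhlin-type multiplier bound. The characteristic polynomial you wrote is right, and your remarks on the high-frequency regime (where the quantum term $\tfrac{\hbar^2}{4}|\xi|^2$ promotes the density mode to parabolic behaviour) and on the Jordan-block crossover are the genuine technical points.

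However, there is nothing to compare against: the paper does \emph{not} prove this lemma. It simply states the estimates and appends ``(see \cite{WT})'', i.e.\ it quotes the result from Wang--Tan, \textit{Optimal decay rates for the compressible fluid models of Korteweg type}, J.\ Math.\ Anal.\ Appl.\ 379 (2011) 256--271. The linearized system here is exactly the Navier--Stokes--Korteweg linearisation treated there (the Bohm potential contributes the same third-order dispersive term $-\tfrac{\hbar^2}{4}\nabla\Delta\varrho$ as a Korteweg capillarity term), so the citation is legitimate. What you have sketched is precisely the method used in that reference, so your proposal is consistent with---and more explicit than---what the paper offers.

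One small cosmetic point: in your displayed characteristic polynomial the symbol $\lambda$ is overloaded (eigenvalue versus second viscosity coefficient); if you write this up, rename one of them.
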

By the Duhamel's principle, the solution $(\varrho,\mathcal{M})$ can be written as
\begin{equation}\label{3.4}
	{(\varrho,\mathcal{M})}(t)={E}(t){(\varrho_0,\mathcal{M}_0)}+\int_0^t{E}(t-\tau)(0,Q)(\tau)d\tau.
\end{equation}
\begin{lemma}\label{la3.3}
Let $(\varrho,\mathcal{M})$ be the solution to the problem \eqref{4.1}, then under the assumptions of Theorem \ref{thm1.1}, for any $s\in[0,\f32)$, we have
\begin{eqnarray*}
  &&\|\ti({\varrho},\mathcal{M})(t)\|_{L^2}\leq CE_0(1+t)^{-\f{1+s}{2}}+C\delta\int_0^t(1+t-\tau)^{-\f{5}{4}}
  \|\ti ({\varrho},\mathcal{M})(\tau)\|_{3,2}d\tau,
\end{eqnarray*}
where $E_0=\|(\Lambda^{-s}\varrho_0,\Lambda^{-s}\mathcal{M}_0)\|_{ L^2}+\|(\varrho_0,\mathcal{M}_0)\|_{H^3}$.
\end{lemma}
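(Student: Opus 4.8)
The plan is to differentiate the Duhamel representation \eqref{3.4} once in $x$ and treat the linear and nonlinear contributions separately,
\[
\ti(\varrho,\mathcal{M})(t)=\ti E(t)(\varrho_0,\mathcal{M}_0)+\int_0^t\ti E(t-\tau)(0,Q)(\tau)\,d\tau .
\]
For the linear term I would use the frequency decomposition underlying Lemma \ref{la3.1}: on $\{|\xi|\le1\}$ the symbol of $\BF{A}$ is of damped acoustic type, $|\widehat{E(t)U_0}(\xi)|\lesssim e^{-c|\xi|^2t}$, and writing $|\xi|^2e^{-c|\xi|^2t}=|\xi|^{-2s}(|\xi|^{2+2s}e^{-c|\xi|^2t})$ with $\sup_{|\xi|\le1}|\xi|^{2+2s}e^{-c|\xi|^2t}\lesssim(1+t)^{-(1+s)}$ produces a bound $C(1+t)^{-\f{1+s}{2}}\|(\Lambda^{-s}\varrho_0,\Lambda^{-s}\mathcal{M}_0)\|$; on $\{|\xi|>1\}$ the system is parabolically dissipative, so this part is $\lesssim e^{-ct}\|(\varrho_0,\mathcal{M}_0)\|_{H^3}\le C(1+t)^{-\f{1+s}{2}}\|(\varrho_0,\mathcal{M}_0)\|_{H^3}$. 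Adding the two pieces gives $\|\ti E(t)(\varrho_0,\mathcal{M}_0)\|\le CE_0(1+t)^{-\f{1+s}{2}}$.

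For the Duhamel integral I would apply Lemma \ref{la3.1} to $E(t-\tau)(0,Q(\tau))$ with $p=1$ and $q=2$, using $k=0$ in the bound for $\ti\varrho$ and $k=1$ in the bound for $\ti\mathcal{M}$; both then carry the common time factor $(1+t-\tau)^{-\f32(1-\f12)-\f12}=(1+t-\tau)^{-\f54}$ multiplied by $\|Q(\tau)\|_{L^1}+\|Q(\tau)\|_{L^2}+\|\ti Q(\tau)\|_{L^2}$. Therefore the lemma reduces to the nonlinear estimate
\[
\|Q(\tau)\|_{L^1}+\|Q(\tau)\|_{H^1}\le C\delta\,\|\ti(\varrho,\mathcal{M})(\tau)\|_{3,2},
\]
after which integrating in $\tau$ and adding the linear bound yields precisely the asserted inequality.

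The main obstacle is this nonlinear estimate, which I would carry out term by term along the expansion \eqref{Q}. Each term there is quadratic in $(\varrho,\mathcal{M})$ (at worst with a bounded coefficient built from $\rho^*$), linear in $(\varrho,\mathcal{M})$ with a coefficient made from the stationary data ($F$, $m^*$, $\ti^j\rho^*$, $\ti^j u^*$), or a higher--order quantum term in $\ti^j\varrho,\ti^j\rho^*$ with $j\le2$. For each one chooses a Hölder splitting that isolates one factor of size $O(\delta)$ --- using $\|(\varrho,\omega)(t)\|_{4,3}\le\delta$, $\|(\sigma^*,u^*)\|_{\mathcal{F}^{4,5}}\le\delta$ and the weighted $L^\infty$ bounds on $(\rho^*,u^*,F,G)$ furnished by Theorem \ref{thm1.1} and $K<\infty$ --- while the remaining derivatives of $\varrho,\mathcal{M}$ (at most three, once one extra derivative is taken for the $\|\ti Q\|_{L^2}$ piece) land inside $\|\ti(\varrho,\mathcal{M})\|_{3,2}$; this matching is exactly why the right--hand side of the lemma is $\|\ti(\varrho,\mathcal{M})\|_{3,2}$ and not a full norm.

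The delicate point is that $\|\ti(\varrho,\mathcal{M})\|_{3,2}$ carries no zeroth--order norm, so the perturbation factor of a linear--in--perturbation term must be routed through Sobolev embedding ($\|f\|_{L^6}\lesssim\|\ti f\|$, $\|f\|_{L^\infty}\lesssim\|\ti f\|_{H^1}$) or Hardy's inequality ($\|f/(1+|x|)\|_{L^2}\lesssim\|\ti f\|$) and paired against a decaying weighted norm of the stationary coefficient, e.g.\ $\|F\varrho\|_{L^1}\le\|(1+|x|)^3F\|_{L^\infty}\|(1+|x|)^{-3}\|_{L^{6/5}}\|\varrho\|_{L^6}\lesssim\delta\|\ti\varrho\|$ and $\|m^*\mathcal{M}\,\ti(\varrho+\rho^*)\|_{L^1}\le\|(1+|x|)m^*\|_{L^\infty}\|\mathcal{M}/(1+|x|)\|_{L^2}\|\ti(\varrho+\rho^*)\|\lesssim\delta\|\ti\mathcal{M}\|$. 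Likewise, writing $\mathcal{M}=\varrho\omega+\varrho u^*+\rho^*\omega$ (using Hardy once more for the $\varrho u^*$ piece) shows $\|\mathcal{M}\|_{L^2}$ and $\|\mathcal{M}\|_{L^\infty}$ are $O(\delta)$, which disposes of the genuinely quadratic terms such as $\|\mathcal{M}\,\ti\mathcal{M}\|_{L^1}\le\|\mathcal{M}\|_{L^2}\|\ti\mathcal{M}\|$. The $H^1$ bound of $Q$ is obtained in the same way, now with up to three derivatives on $\varrho,\mathcal{M}$; since these are essentially the estimates already performed in the $\dot{H}^{-s}$ setting in the proofs of Lemmas \ref{lma5.1}--\ref{lma5.2}, I would display a few representative terms and refer to those computations for the remainder, then insert the resulting bound into the Duhamel integral to conclude.
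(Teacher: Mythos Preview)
Your proposal is correct and follows the same scheme as the paper: Duhamel formula, linear decay via the $\dot{H}^{-s}$ assumption, and the nonlinear bound $\|Q\|_{L^1\cap H^1}\le C\delta\,\|\ti(\varrho,\mathcal{M})\|_{3,2}$ obtained term by term from \eqref{Q} using H\"older, Hardy, and the weighted estimates on the stationary data. The only minor difference is in the linear piece: where you use a direct low/high-frequency split, the paper writes $\ti E(t)(\varrho_0,\mathcal{M}_0)=\Lambda^{s}\ti E(t)\Lambda^{-s}(\varrho_0,\mathcal{M}_0)$ and then interpolates via Lemma~\ref{Ala3.2} between integer-order derivatives of $E(t)\Lambda^{-s}(\varrho_0,\mathcal{M}_0)$, to which Lemma~\ref{la3.1} with $p=q=2$ applies.
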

\begin{proof} From \eqref{3.3} and \eqref{3.4}, we have
\begin{eqnarray}\label{A111}
\|\ti({\varrho},\mathcal{M})(t)\|_{L^2}\leq C\|\Lambda^{s}\ti E(t)\ast\Lambda^{-s}(\varrho_0,\mathcal{M}_0)\|_{L^2}+C\int_0^t(1+t-\tau)^{-\f{5}{4}}
\| ({0},Q)(\tau)\|_{L^1\cap H^1}d\tau.
\end{eqnarray}
By Lemma \ref{Ala3.2}, for $s\in[0,1]$, we have
\begin{eqnarray}
\|\Lambda^{s}\ti E(t)\ast\Lambda^{-s}(\varrho_0,\mathcal{M}_0)\|_{L^2}\leq C\|\ti E(t)\ast\Lambda^{-s}(\varrho_0,\mathcal{M}_0)\|_{L^2}^{1-s}\|\ti^2 E(t)\ast\Lambda^{-s}(\varrho_0,\mathcal{M}_0)\|_{L^2}^s,
\end{eqnarray}
for $s\in(1,\f32)$, we have
 \begin{eqnarray}
\|\Lambda^{s}\ti E(t)\ast\Lambda^{-s}(\varrho_0,\mathcal{M}_0)\|_{L^2}\leq C\|\ti^2 E(t)\ast\Lambda^{-s}(\varrho_0,\mathcal{M}_0)\|_{L^2}^{2-s}\|\ti^3 E(t)\ast\Lambda^{-s}(\varrho_0,\mathcal{M}_0)\|_{L^2}^{s-1}.
\end{eqnarray}
By the H\"{o}lder inequality, the Hardy inequality and Theorem \ref{thm1.2}, it holds that
\begin{eqnarray}\label{A112}
\| Q(\tau)\|_{L^1\cap H^1}\leq C\delta\|\ti ({\varrho},\mathcal{M})(\tau)\|_{3,2}.
\end{eqnarray}
By virtue of \eqref{A111}-\eqref{A112}, together with Lemma \ref{la3.1}, the proof is complete.
\end{proof}
Now, we define
\begin{eqnarray}
  N(t):=\sup\limits_{0\leq\tau\leq t}(1+\tau)^{1+s}\|\ti ({\varrho},\mathcal{M})(\tau)\|_{3,2}^2,\label{3.15}
\end{eqnarray}
then $N(t)$ is nondecreasing and satisfies
\begin{align*}
\|\ti ({\varrho},\mathcal{M})(\tau)\|_{3,2}\leq C(1+\tau)^{-\f{1+s}{2}}\sqrt{N(t)},~~0\leq \tau\leq t.
\end{align*}
From Lemma \ref{la3.2} and Lemma \ref{la3.3}, we have
\begin{eqnarray*}
\|\ti ({\varrho},\mathcal{M})(t)\|_{L^2}&\leq& CE_0(1+t)^{-\f{1+s}{2}}\\
  &&+C\delta\int_0^t(1+t-\tau)^{-\f54}
  (1+\tau)^{-\f{1+s}{2}}d\tau\sqrt{N(t)}\\
  &\leq& C(1+t)^{-\f{1+s}{2}}\bigg[E_0+
  \delta\sqrt{N(t)}\bigg].
\end{eqnarray*}
Then, by Gronwall's inequality, we deduce  from \eqref{B3.8}
\begin{eqnarray}
\mathcal{E}(t)&\leq& \mathcal{E}(0)e^{-C_1t}+C_2\int_0^te^{-C_1(t-\tau)}
  \|\ti ({\varrho},\mathcal{M})(\tau)\|^2_{L^2}d\tau\nt\\
  &\leq& \mathcal{E}(0)(1+t)^{-(1+s)}\nt\\
  &&+C\int_0^t(1+t-\tau)^{-(1+s)}
  (1+\tau)^{-(1+s)}d\tau
  \big[E_0^2+\delta^2{N(t)}\big]\nt\\
  &\leq&C(1+t)^{-(1+s)}\big[\mathcal{E}(0)+E_0^2+
  \delta^2{N(t)}\big],\label{3.16}
\end{eqnarray}
where $\mathcal{E}(t)$ is equivalent to $\|\ti ({\varrho},\omega)(t)\|_{3,2}^2$.

Recall $\mathcal{M}=m-m^*=(\varrho+\rho^*)\omega+\varrho u^*$, we have
\begin{eqnarray*}
 \|\ti \mathcal{M}\|_{L^2}&\leq&\|\ti(\varrho+\rho^*)\|_{L^3}\|\omega\|_{L^6}+\|(\varrho+\rho^*)\|_{L^\infty}\|\ti \omega\|_{L^2}+\|\ti u^*\|_{L^3}\|\varrho\|_{L^6}+\|u^*\|_{L^\infty}\|\ti \varrho\|_{L^2}\\
 &\leq& C\delta\|\ti({\varrho},\omega)\|_{L^2}.
\end{eqnarray*}
Similarly, the terms $ \|\ti^2 \mathcal{M}\|_{L^2}$ and $ \|\ti^3 \mathcal{M}\|_{L^2}$ can be bounded. Hence, $$\|\ti ({\varrho},\mathcal{M})(t)\|_{3,2}\leq C \mathcal{E}(t).$$
Then, we obtain from \eqref{3.15} and \eqref{3.16}
\begin{eqnarray*}
  N(t)\leq C(\mathcal{E}(0)+E_0^2+\delta^2N(t)).
\end{eqnarray*}
Choosing $\delta$ so small that $\delta\leq1/\sqrt{2C}$, we get
\begin{eqnarray*}
  N(t)\leq 2C(\mathcal{E}(0)+E_0^2),
\end{eqnarray*}
and hence from \eqref{3.16},
\begin{eqnarray}
  \|(\ti ({\varrho},\mathcal{M})(t)\|_{3,2}\leq C(1+t)^{-\f{1+s}{2}}.\label{*3.17}
\end{eqnarray}
Finally, by using Lemma \ref{la3.1} and \eqref{*3.17}, we have
\begin{eqnarray*}
\|({\varrho},\mathcal{M})(t)\|_{L^2}&\leq& CE_0(1+t)^{-\f{s}{2}}+C\delta\int_0^t(1+t-\tau)^{-\f{3}{4}}
  \|\ti ({\varrho},\mathcal{M})(\tau)\|_{H^2}d\tau\\
  &\leq&  CE_0(1+t)^{-\f{s}{2}}+C\delta\int_0^t(1+t-\tau)^{-\f{3}{4}}
 (1+\tau)^{-\f{1+s}{2}}d\tau\\
 &\leq&C(1+t)^{-\f{s}{2}}.
\end{eqnarray*}
This completes the proof of the time decay rates of Theorem \ref{thm1.3}.

\BF{Acknowledgments}
	
X. Xi was supported by NSFC (No. 11901127), Basic Research Project of Guangzhou, and Introduction of talent research start-up fund at Guangzhou University. The author will thank the referees for valuable comments and suggestions.

\end{document}